\newtheorem{theorem}{Theorem}
\newtheorem{lemma}[theorem]{Lemma}
\newtheorem{proposition}[theorem]{Proposition}
\newtheorem{corollary}[theorem]{Corollary}
\newtheorem{remark}[theorem]{Remark}
\newcommand{\R}{\mathbb{R}}
\newcommand{\Z}{\mathbb{Z}}
\newcommand{\C}{\mathbb{C}}
\newcommand{\N}{\mathbb{N}}
\newcommand{\bS}{\mathbb{S}}
\newcommand{\bI}{\mathbb{I}}
\newcommand{\dps}{\displaystyle}
\newcommand{\ii}{\infty}
\newcommand\1{{\ensuremath {\mathds 1} }}
\renewcommand\phi{\varphi}
\newcommand{\wto}{\rightharpoonup}
\newcommand{\cU}{\mathcal{U}}
\newcommand{\cR}{\mathcal{R}}
\newcommand{\cM}{\mathcal{M}}
\newcommand{\cK}{\mathcal{K}}
\newcommand{\cE}{\mathcal{E}}
\newcommand{\cF}{\mathcal{F}}
\newcommand{\cN}{\mathcal{N}}
\newcommand{\cD}{\mathcal{D}}
\newcommand{\cG}{\mathcal{G}}
\newcommand{\cH}{\mathcal{H}}
\newcommand\pscal[1]{{\ensuremath{\left\langle #1 \right\rangle}}}
\newcommand{\rd}{{\rm d}}
\newcommand{\Vol}{{\rm Vol}}
\newcommand{\vol}{{\rm vol}}
\newcommand{\norm}[1]{ \left\| #1 \right\|}
\newcommand{\tr}{{\rm Tr}\,}
\newcommand{\Tr}{{\rm Tr}\,}
\renewcommand{\geq}{\geqslant}
\renewcommand{\leq}{\leqslant}
\renewcommand{\tilde}{\widetilde}
\newcommand{\eps}{\varepsilon}
\newcommand{\nn}{\nonumber}
\title[Finite-rank Lieb-Thirring inequality]{Optimizers for the finite-rank Lieb-Thirring inequality}
\author{Rupert L. Frank}
\address[Rupert L. Frank]{Mathematics 253-37, Caltech, Pasa\-de\-na, CA 91125, USA, and Mathematisches Institut, Ludwig-Maximilans Universit\"at M\"unchen, Theresienstr. 39, 80333 M\"unchen, Germany, and Munich Center for Quantum Science and Technology (MCQST), Schellingstr. 4, 80799 M\"unchen, Germany}
\email{rlfrank@caltech.edu}
\author{David Gontier}
\address[David Gontier]{CEREMADE, University of Paris-Dauphine, PSL University, 75016 Paris, France}
\email{gontier@ceremade.dauphine.fr}
\author{Mathieu Lewin}
\address[Mathieu Lewin]{CNRS and CEREMADE, University of Paris-Dauphine, PSL University, 75016 Paris, France}
\email{mathieu.lewin@math.cnrs.fr}
\date{\today}
\begin{document}

 \begin{abstract}
The finite-rank Lieb-Thirring inequality provides an estimate on a Riesz sum of the $N$ lowest eigenvalues of a Schr\"odinger operator $-\Delta-V(x)$ in terms of an $L^p(\mathbb{R}^d)$ norm of the potential $V$. We prove here the existence of an optimizing potential for each $N$, discuss its qualitative properties and the Euler--Lagrange equation (which is a system of coupled nonlinear Schr\"odinger equations) and study in detail the behavior of optimizing sequences. In particular, under the condition $\gamma>\max\{0,2-d/2\}$ on the Riesz exponent in the inequality, we prove the compactness of all the optimizing sequences up to translations. We also show that the optimal Lieb-Thirring constant cannot be stationary in $N$, which sheds a new light on a conjecture of Lieb-Thirring. In dimension $d=1$ at $\gamma=3/2$, we show that the optimizers with $N$ negative eigenvalues are exactly the Korteweg-de Vries $N$--solitons and that optimizing sequences must approach the corresponding manifold. Our work also covers the critical case $\gamma=0$ in dimension $d\geq3$ (Cwikel-Lieb-Rozenblum inequality) for which we exhibit and use a link with invariants of the Yamabe problem.

\bigskip

\noindent \sl \copyright~2021 by the authors. This paper may be reproduced, in its entirety, for non-commercial purposes.
 \end{abstract}

 \maketitle

 \tableofcontents

\section{Introduction}

Lieb-Thirring inequalities provide estimates on the Riesz sums of the negative eigenvalues of a Schrödinger operator $-\Delta-V(x)$ in terms of the size of the potential $V(x)$ in some $L^p$ spaces. They play a crucial role in the rigorous study of large fermionic quantum systems, and were introduced by Lieb and Thirring in 1975~\cite{LieThi-75,LieThi-76} to provide a short and elegant proof of the stability of matter, previously shown by Dyson and Lenard~\cite{DysLen-67,DysLen-68}. Many interesting and challenging mathematical questions are still open today regarding these inequalities~\cite{Frank-23,FraLapWei-LT}. One important problem is to determine the value of the best Lieb-Thirring constants, which has some implications in Density Functional Theory~\cite{LewLieSei-19_ppt}. In two recent works~\cite{FraGonLew-21,FraGonLew-21b} we have suggested a new scenario for these best constants, based on the study of the `finite rank' inequalities where only the $N$ first eigenvalues are considered. We pursue this analysis in this paper, with an approach different from that of~\cite{FraGonLew-21} allowing to reach the complete range of possible exponents.

We start by recalling what the Lieb-Thirring inequalities are. Let $d\geq1$ be the space dimension and
\begin{equation}
\gamma\begin{cases}
\geq\frac12&\text{for $d=1$,}\\
>0&\text{for $d=2$,}\\
\geq0&\text{for $d\geq3$.}
\end{cases}
\label{eq:gamma}
\end{equation}
Let $V\in L^1_{\rm loc}(\R^d)$ be a real-valued function with its positive part $V_+=\max(0,V)$ in the Lebesgue space $L^{\gamma+d/2}(\R^d)$. The operator $-\Delta-V$ has a well defined (Friedrichs) self-adjoint realization defined through its quadratic form and its essential spectrum is contained in the half line $[0,\ii)$~\cite{ReeSim2,ReeSim4}. We denote by $\lambda_j(-\Delta-V)$ the $j$th min-max level which equals the $j$th negative eigenvalue counted with multiplicity if it exists and vanishes otherwise. The Lieb-Thirring inequality reads
\begin{equation}
\sum_{j=1}^{\ii}|\lambda_j(-\Delta-V)|^\gamma\leq L_{\gamma,d}\int_{\R^d}V(x)_+^{\gamma+\frac{d}2}\,\rd x,
\label{eq:LT}
\end{equation}
where $L_{\gamma,d}$ is by definition the best (lowest) constant in the inequality.
Note that the left side of~\eqref{eq:LT} increases if we replace $V$ by $V_+$. Hence, without loss of generality, we may always assume that $V\geq0$, which justifies our choice of putting a minus sign in the operator $-\Delta-V$. At $\gamma=0$ the left side of~\eqref{eq:LT} is by definition taken equal to the number of negative eigenvalues. The Lieb-Thirring theorem~\cite{LieThi-75,LieThi-76} states that
$$L_{\gamma,d}<\ii$$
for $\gamma$ and $d$ as in~\eqref{eq:gamma} (see also~\cite{LieSei-09,Frank-23,FraLapWei-LT}). The end point cases $\gamma=0$ in $d\geq3$ and $\gamma=1/2$ in $d=1$ are respectively due to  Cwikel-Lieb-Rozenblum~\cite{Cwikel-77,Lieb-76b,Rozenbljum-72} and Weidl~\cite{Weidl-96}.

As we have mentioned above, it is important for applications to determine the value of $L_{\gamma,d}$. So far this is only known for $\gamma\geq3/2$ in all dimensions~\cite{LieThi-76,AizLie-78,LapWei-00,BenLos-00} and $\gamma=1/2$ in dimension $d=1$~\cite{HunLieTho-98}. The best currently known bound in the physically most relevant case $\gamma=1$ is in~\cite{FraHunJexNam-21}. A natural question is whether there exists an optimal potential and whether $L_{\gamma,d}$ could be given by a potential having finitely many eigenvalues. This leads us to introducing what we call the \emph{finite-rank Lieb-Thirring inequality}
\begin{equation}
\boxed{\sum_{j=1}^{N}|\lambda_j(-\Delta-V)|^\gamma\leq L^{(N)}_{\gamma,d}\int_{\R^d}V(x)_+^{\gamma+\frac{d}2}\,\rd x,}
\label{eq:LT_N}
\end{equation}
for all $N\in\N$. Again we denote by
\begin{equation}
\boxed{L^{(N)}_{\gamma,d}=\sup_{\substack{0\leq V\in L^{\gamma+d/2}(\R^d)\\ V\neq0}}\frac{\sum_{j=1}^{N}|\lambda_j(-\Delta-V)|^\gamma}{\int_{\R^d}V(x)^{\gamma+d/2}\,\rd x}}
 \label{eq:variational_L}
\end{equation}
the best constant in the inequality~\eqref{eq:LT_N}. At $\gamma=0$, our convention is that the left side of~\eqref{eq:LT_N} equals $\min(N,\#\{j\;:\;\lambda_j(-\Delta-V)<0\})$, that is, the number of negative eigenvalues truncated to $N$. The constants $L^{(N)}_{\gamma,d}$ form a non-decreasing sequence which converges to $L_{\gamma,d}$ in the limit $N\to\ii$. Our goal in this paper is to prove the existence of an optimal potential $V$ for $L^{(N)}_{\gamma,d}$ and to study whether it could possibly be an optimizer for $L_{\gamma,d}$.

The case $N=1$ is studied at length in~\cite{LieThi-76}. A duality argument (also recalled in~\cite{Frank-23,FraGonLew-21}) implies that
$$L^{(1)}_{\gamma,d}=\left(\frac{2\gamma}{2\gamma+d}\right)^{\gamma+\frac{d}2}\left(\frac{d}{2\gamma}\right)^{\frac{d}2}\left(C_{p,d}^\text{GNS}\right)^{\frac{d}2},\qquad p=\frac{2d+4\gamma}{d-2+2\gamma},$$
where $C_{p,d}^\text{GNS}$ is the best constant in the Gagliardo-Nirenberg-Sobolev inequality
\begin{equation}
 \left(\int_{\R^d}|u(x)|^p\,\rd x\right)^{\frac{4}{d(p-2)}}\leq C_{p,d}^\text{GNS}\left(\int_{\R^d}|u(x)|^2\,\rd x\right)^{\frac{(2-d)p+2d}{d(p-2)}}\int_{\R^d}|\nabla u(x)|^2\,\rd x.
 \label{eq:GNS}
\end{equation}
The condition~\eqref{eq:gamma} on $\gamma$ corresponds to the usual Sobolev constraint $2\leq p\leq 2^*$. At $\gamma=0$ the exponent of the $L^2$ norm vanishes and we obtain the Sobolev inequality. There are always optimizers for $L^{(1)}_{\gamma,d}$ when $\gamma$ is as in~\eqref{eq:gamma} (excluding $\gamma=1/2$ if $d=1$). Those can be expressed as
$$V=\beta^2Q(\beta x-X)^{p-2},\qquad \beta>0,\ X\in \R^d.$$
When $\gamma>0$, $Q$ is the unique radial positive nonlinear Schrödinger ground state, that is, the optimizer of~\eqref{eq:GNS} which solves the nonlinear equation
$$-\Delta Q-Q^{p-1}+Q=0.$$
At $\gamma=0$, $Q$ solves instead the Emden-Fowler equation
$$-\Delta Q=Q^{\frac{d+2}{d-2}}.$$
It is known~\cite{HunLieTho-98,GarGreeKruMiu-74,LieThi-76} that $L_{\gamma,d}=L^{(1)}_{\gamma,d}$ for $d=1$ and $\gamma\in\{\frac12,\frac32\}$ and numerical simulations~\cite{LieThi-76,Levitt-14,FraGonLew-21} suggest the same for $\gamma\in(\frac12,\frac32)$.

When $N\geq2$, the bound $0\geq \lambda_j(-\Delta-V)\geq\lambda_1(-\Delta-V)$ implies that $L^{(N)}_{\gamma,d}\leq NL^{(1)}_{\gamma,d}<\ii$ but the behavior in $N$ is not optimal since $L^{(N)}_{\gamma,d}$ converges to $L_{\gamma,d}$ in the limit $N\to\ii$. In this paper we prove the following results:
\begin{itemize}[leftmargin=* ,parsep=0cm,itemsep=0cm,topsep=0cm]
 \item \textsl{(Existence)} $L^{(N)}_{\gamma,d}$ \textbf{admits an optimizer} for all $N\in\N$ and all $\gamma$ as in~\eqref{eq:gamma}. The case $\gamma=1/2$ in dimension $d=1$ is studied in~\cite{HunLieTho-98} and only the other cases are considered here. In Theorems~\ref{thm:bubble} and~\ref{thm:bubble_critical} below, we show a \textbf{bubble decomposition} for the variational principle~\eqref{eq:variational_L} which provides a rather precise description of the behavior of all the maximizing sequences, including the critical case $\gamma=0$ in dimensions $d\geq3$. The existence of optimizers follows immediately from this decomposition.

 \smallskip

 \item \textsl{(Properties of optimizers)} In Theorems~\ref{thm:equation} and~\ref{thm:equation_critical} we prove that any optimizer $V$ of $L^{(N)}_{\gamma,d}$ solves a particular \textbf{nonlinear equation}, and use it to derive its regularity and decay at infinity.

  \smallskip

 \item \textsl{(Compactness and binding for $\gamma>2-d/2$)} We then prove in Theorem~\ref{thm:existence} that when
 \begin{equation}
\gamma>\max\left(0,2-\frac{d}2\right),
 \label{eq:gamma_intro}
 \end{equation}
 \textbf{all the (properly normalized) maximizing sequences for~\eqref{eq:variational_L} are compact, up to space translations} in $L^{\gamma+d/2}(\R^d)$, and thus converge to an optimizer after extraction of a subsequence. In addition we prove that
 \begin{equation}
  L_{\gamma,d}>L^{(N)}_{\gamma,d}\qquad\text{for all $N\in\N$.}
  \label{eq:increasing}
 \end{equation}
The best Lieb-Thirring constant can thus \textbf{never be attained for a potential having finitely many bound states}. The proof is inspired by~\cite{GonLewNaz-21,FraGonLew-21} and goes by showing that
\begin{equation}
  L^{(2N)}_{\gamma,d}>L^{(N)}_{\gamma,d}\qquad\text{for all $N\in\N$.}
  \label{eq:binding_intro}
 \end{equation}
In particular, we have $L^{(2)}_{\gamma,d}>L^{(1)}_{\gamma,d}$. The idea is to compute the exponentially small nonlinear correction due to quantum tunnelling when two optimizers for $L^{(N)}_{\gamma,d}$ are placed far apart. It is negative because of~\eqref{eq:gamma_intro}. Moreover we will see below that the constraint~\eqref{eq:gamma_intro} is necessary at least in dimensions $d=1$ and $d\geq 4$.

 \smallskip

\item \textsl{(The integrable case)} The constraint~\eqref{eq:gamma_intro} on $\gamma$ is optimal in dimension $d=1$. The result cannot hold at $\gamma=3/2$ since, as we have recalled, $L^{(N)}_{3/2,1}=L^{(1)}_{3/2,1}=L_{3/2,1}$ for all $N\in\N$~\cite{HunLieTho-98,GarGreeKruMiu-74,LieThi-76}. We prove that
there exist \textbf{non-compact maximizing sequences}. More precisely, the optimizers for $L^{(N)}_{3/2,1}$ are exactly \textbf{the KdV $n$-solitons} for $n\leq N$, which form a non-compact manifold. We also show that any maximizing sequence approaches the manifold of KdV solitons in $L^2$.

 \smallskip

\item \textsl{(The critical case)} The constraint~\eqref{eq:gamma_intro} on $\gamma$ is also optimal in dimensions $d\geq4$. At $\gamma=0$, using a link with the Yamabe problem and ideas from~\cite{AmmHum-06,GlaGroMar-78} we prove in Corollary~\ref{cor:L2=L1} that
$$L^{(2)}_{0,d}=L^{(1)}_{0,d}\qquad\text{in all dimensions $d\geq3$}$$
which should be compared with the strict inequality~\eqref{eq:binding_intro} for $\gamma>0$ and $N=1$. We can also show that
$$L^{(d+2)}_{0,d}>L^{(1)}_{0,d}\qquad\text{in all dimensions $d\geq7$}.$$
\end{itemize}

\medskip

\noindent The strict inequalities~\eqref{eq:increasing} and~\eqref{eq:binding_intro} were proved in~\cite{FraGonLew-21} for $\gamma\geq1$, using a duality principle for $L^{(N)}_{\gamma,d}$. Since no such principle is at our disposal for $\gamma<1$, we develop in this paper a completely new approach, which also sheds a new light on the previously known cases and allows to reach the critical case $\gamma=0$ in dimensions $d\geq3$.

Our work leaves open the question of what $L_{\gamma,d}$ could be for $\max(0,2-d/2)<\gamma<3/2$ in dimensions $d\geq2$, but at least says that the finite-rank case cannot be optimal. Based on a statistical mechanics interpretation of the Lieb-Thirring inequality, we proposed in~\cite{FraGonLew-21b} that a sequence $V^{(N)}$ of optimizers for $L^{(N)}_{\gamma,d}$ could converge to a (not necessarily constant) periodic potential. This was supported by an exact computation at $\gamma=3/2$ in $d=1$ and numerical simulations in dimension $d=2$.

In the next two sections we state all our main results for the subcritical case ($\gamma>1/2$ in dimension $d=1$ and $\gamma>0$ in $d\geq2$) and the critical case ($\gamma=0$ in $d\geq3$), respectively. At $\gamma=0$ we in fact study another constant $\ell^{(N)}_{0,d}$ related to $L^{(N)}_{0,d}$, which was previously considered in the context of the Yamabe problem in~\cite{AmmHum-06}. The rest of the paper contains the proof of all our results.

\section{Main results in the subcritical case}
In this section we state all our results in the case that $\gamma>1/2$ in dimension $d=1$ and $\gamma>0$ for $d\geq2$. For the reader's convenience, we prefer to discuss the critical case $\gamma=0$ in dimension $d\geq3$ separately in Section~\ref{sec:results_critical}.

\subsection{Bubble decomposition}
We first clarify the notion of optimizers. In the whole paper we say that a non-trivial potential $V\geq0$ is a \textbf{maximizer (or optimizer) for $L^{(N)}_{\gamma,d}$} when it realizes the supremum on the right of~\eqref{eq:variational_L}. Note that a maximizer does not necessarily have exactly $N$ negative eigenvalues. It could have more or less. If we rescale a potential $V$ in the manner $t^2V(t\cdot)$ we see that the functional in the supremum in~\eqref{eq:variational_L} is invariant, whereas the integral becomes
$$t^{2\gamma+d}\int_{\R^d}V(tx)^{\gamma+\frac{d}2}\,\rd x=t^{2\gamma}\int_{\R^d}V(x)^{\gamma+\frac{d}2}\,\rd x.$$
Therefore, choosing $t$ appropriately we may also freely assume that $V$ is normalized in $L^{\gamma+d/2}(\R^d)$:
\begin{equation}
 L^{(N)}_{\gamma,d}=\sup_{\substack{0\leq V\in L^{\gamma+d/2}(\R^d)\\ \int_{\R^d}V^{\gamma+d/2}=1}}\sum_{j=1}^{N}|\lambda_j(-\Delta-V)|^\gamma.
 \label{eq:variational_L_normalized}
\end{equation}
We call a solution of this supremum a \textbf{normalized maximizer (or optimizer)}. Similarly, maximizing sequences for the supremum in~\eqref{eq:variational_L_normalized} will be called \textbf{normalized maximizing sequences for $L^{(N)}_{\gamma,d}$}.

Our first main result concerns the behavior of an arbitrary maximizing sequence $(V_n)$ for the variational problem~\eqref{eq:variational_L_normalized}.

\begin{theorem}[Bubble decomposition for $L^{(N)}_{\gamma,d}$ -- subcritical case]\label{thm:bubble}
Let $\gamma>0$ if $d\geq2$ and $\gamma>1/2$ if $d=1$. For $N\in\N$, consider a normalized maximizing sequence of non-negative potentials $(V_n)$ for $L^{(N)}_{\gamma,d}$, that is,
\begin{equation}
 \int_{\R^d}V_n(x)^{\gamma+d/2}\,\rd x=1,\qquad \lim_{n\to\ii}\sum_{j=1}^N|\lambda_j(-\Delta-V_n)|^\gamma=L^{(N)}_{\gamma,d}.
 \label{eq:choice_V_n}
\end{equation}
Then there exist

\smallskip

\begin{itemize}[leftmargin=* ,parsep=0cm,itemsep=0cm,topsep=0cm]
 \item integers $N_1,...,N_K\geq1$ (for some $K\geq1$) so that
\begin{equation}
\sum_{k=1}^KN_k=N\quad\text{and}\quad L^{(N_k)}_{\gamma,d}= L^{(N)}_{\gamma,d}\quad \text{for all $k=1,...,K$;}
\label{eq:decomp_N}
\end{equation}

\smallskip

\item non-negative functions $0\neq V^{(k)}\in L^{\gamma+d/2}(\R^d,\R_+)$ for $k=1,...,K$, which are maximizers for $L^{(N_k)}_{\gamma,d}$ and satisfy $\lambda_{N_k+1}(-\Delta-V^{(k)})=0$ if $N_k<N$, hence are also maximizers for $L^{(N)}_{\gamma,d}$, for all $k=1,...,K$;

\smallskip

\item sequences $x_n^{(k)}\in\R^d$ with
$$\lim_{n\to\ii}|x^{(k)}_n-x^{(k')}_n|=+\ii\qquad\text{for all $1\leq k\neq k'\leq K$;}$$
\end{itemize}

\smallskip

\noindent such that, after extracting a (not displayed) subsequence, we have
\begin{equation}
\boxed{\lim_{n\to\ii}\norm{V_n-\sum_{k=1}^K V^{(k)}(\cdot -x_n^{(k)})}_{L^{\gamma+d/2}(\R^d)}=0.}
\label{eq:bubble_v}
\end{equation}
\end{theorem}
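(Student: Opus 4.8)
The plan is to run a concentration-compactness / profile decomposition argument directly on the functional $\cF_N(V) := \sum_{j=1}^N |\lambda_j(-\Delta-V)|^\gamma$, exploiting that the $L^{\gamma+d/2}$-norm is conserved along the sequence and that translations are the only symmetry to factor out. First I would record the two basic facts that make the machinery run: (i) concavity/monotonicity of $\cF_N$ in $V$ together with the min-max characterization, which gives a Brezis--Lieb-type splitting, and (ii) strong-subadditivity of the constants, $L^{(N)}_{\gamma,d} \ge \max_k L^{(N_k)}_{\gamma,d}$ whenever $\sum N_k = N$ (obtained by placing well-separated potentials), which will force the equalities in \eqref{eq:decomp_N}. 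The quantity $\int V_n^{\gamma+d/2}$ plays the role of the ``mass'' in concentration-compactness.

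Next I would apply a bubble-extraction iteration. Because $(V_n)$ is bounded in $L^{\gamma+d/2}(\R^d)$, after translation by a well-chosen sequence $x_n^{(1)}$ one extracts a weak limit $V^{(1)}$; the key analytic input is that the first few min-max levels $\lambda_j(-\Delta - V_n(\cdot + x_n^{(1)}))$ converge to those of $-\Delta - V^{(1)}$ along a subsequence, provided the translations are chosen so that no mass escapes to spatial infinity ``under'' the relevant eigenfunctions. One then shows $V_n - V^{(1)}(\cdot - x_n^{(1)}) \to W_n$ with $\norm{W_n}_{\gamma+d/2}^{\gamma+d/2} = 1 - \norm{V^{(1)}}_{\gamma+d/2}^{\gamma+d/2} + o(1)$ and, by an asymptotic decoupling of the eigenvalues (the potentials live on asymptotically disjoint regions), $\cF_{N}(V_n) \le \cF_{N_1}(V^{(1)}) + \cF_{N - N_1}(W_n) + o(1)$ for the appropriate splitting $N = N_1 + (N-N_1)$ dictated by how many of the bottom eigenvalues are captured by the first bubble. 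Combined with the normalization this yields, after rescaling each piece back to unit mass, the chain of inequalities $L^{(N)}_{\gamma,d} \le L^{(N_1)}_{\gamma,d} + L^{(N-N_1)}_{\gamma,d} \le L^{(N)}_{\gamma,d}$ (the last step by subadditivity), forcing both $V^{(1)}$ to be an optimizer for $L^{(N_1)}_{\gamma,d}$ \emph{with} $L^{(N_1)}_{\gamma,d} = L^{(N)}_{\gamma,d}$, and $V^{(1)}$ to still carry unit mass so that there is no rescaling — i.e.\ $\norm{V^{(1)}}_{\gamma+d/2} = 1$ after all — or, alternatively, the remainder $W_n$ being again a maximizing sequence for $L^{(N-N_1)}_{\gamma,d}$. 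Iterating on $W_n$ (the remaining number of eigenvalues strictly decreases, or the mass strictly decreases by a quantized amount bounded below), the process terminates after $K \le N$ steps, producing the bubbles $V^{(k)}$, the divergence $|x_n^{(k)} - x_n^{(k')}| \to \infty$, and $\sum_k N_k = N$.

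The final step is upgrading weak convergence of the sum to strong convergence \eqref{eq:bubble_v}. This follows because the total mass is exactly accounted for: $1 = \sum_k \norm{V^{(k)}}_{\gamma+d/2}^{\gamma+d/2} = \lim_n \norm{V_n}_{\gamma+d/2}^{\gamma+d/2}$ with no residual, so in $L^{\gamma+d/2}$ (a uniformly convex space for $\gamma+d/2 > 1$, which holds here since $\gamma > 1/2$ in $d=1$ and $\gamma+d/2 > 1$ otherwise) weak convergence together with convergence of norms gives strong convergence of $V_n - \sum_k V^{(k)}(\cdot - x_n^{(k)})$ to $0$; one also needs that each $V^{(k)}$ with $N_k < N$ satisfies $\lambda_{N_k+1}(-\Delta - V^{(k)}) = 0$, which comes out of the extraction (if it had an extra bound state one could lower $N_k$, contradicting minimality / the eigenvalue bookkeeping) and makes it simultaneously an optimizer for $L^{(N)}_{\gamma,d}$.

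The main obstacle I anticipate is the analytic control of the min-max levels under weak limits and spatial translations: ensuring that $\lambda_j(-\Delta - V_n) \to \lambda_j(-\Delta - V^{(1)})$ (no spurious loss of negative spectrum to infinity, no collapse of eigenvalues to the essential spectrum at $0$), and correctly distributing the ``budget'' of $N$ eigenvalues among the bubbles — i.e.\ proving the asymptotic eigenvalue-decoupling $\cF_N\big(\sum_k V^{(k)}(\cdot - x_n^{(k)})\big) = \sum_k \cF_{N_k}(V^{(k)}) + o(1)$ with the optimal choice of the $N_k$'s, and ruling out ``vanishing'' (mass spreading out so that all min-max levels tend to $0$, which would contradict $L^{(N)}_{\gamma,d} > 0$) and ``dichotomy without separation''. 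This is where the hypothesis $\gamma > 1/2$ in $d=1$ (and $\gamma > 0$ in $d \ge 2$) enters, guaranteeing that a non-negligible potential produces at least one genuinely negative eigenvalue and that the relevant compactness for the resolvent / eigenfunction tails is available; the critical endpoint $\gamma=0$ is precisely what is deferred to Theorem~\ref{thm:bubble_critical}.
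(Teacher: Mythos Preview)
Your overall strategy---concentration-compactness on the mass $\int V_n^{\gamma+d/2}$, ruling out vanishing, extracting a tight piece, decoupling the eigenvalue sum across well-separated regions, and iterating---is exactly the paper's approach. The paper organizes it as: (Step~1) a vanishing lemma bounding $|\lambda_1|$ by the local mass, (Step~2) Levy concentration functions to produce radii $R_n\to\infty$ with negligible mass in the annulus $B_{2R_n}\setminus B_{R_n}$, then an IMS localization showing the spectrum of $-\Delta-V_n$ is asymptotically the union of the spectra of the inner and outer pieces, (Step~3) convergence of min-max levels for tight sequences, and (Step~4) induction. Your sketch hits all of these ingredients.

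However, your key chain of inequalities is wrong as written. You claim
\[
L^{(N)}_{\gamma,d} \le L^{(N_1)}_{\gamma,d} + L^{(N-N_1)}_{\gamma,d} \le L^{(N)}_{\gamma,d}
\]
``by subadditivity''. Neither inequality is correct: there is no such additive relation among the $L^{(m)}_{\gamma,d}$, and placing far-apart potentials only gives $L^{(N)}_{\gamma,d}\geq \max(L^{(N_1)}_{\gamma,d},L^{(N-N_1)}_{\gamma,d})$, i.e.\ monotonicity. What actually closes the argument is the \emph{mass-weighted} estimate: if the tight piece carries mass $\alpha\in(0,1)$ and the escaping piece $1-\alpha$, then
\[
L^{(N)}_{\gamma,d}\;\leq\; \alpha\, L^{(N_1)}_{\gamma,d} + (1-\alpha)\, L^{(N-N_1)}_{\gamma,d}\;\leq\; L^{(N)}_{\gamma,d},
\]
the first inequality coming from the eigenvalue decoupling plus the definition of the optimal constants applied to each piece, and the second from monotonicity $L^{(m)}_{\gamma,d}\leq L^{(N)}_{\gamma,d}$ for $m\leq N$. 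Equality throughout is what forces $L^{(N_1)}_{\gamma,d}=L^{(N-N_1)}_{\gamma,d}=L^{(N)}_{\gamma,d}$ and makes each piece a maximizer. Relatedly, your assertion that ``$V^{(1)}$ still carries unit mass'' is incorrect: each bubble $V^{(k)}$ carries mass $\alpha_k\in(0,1]$ with $\sum_k\alpha_k=1$; they are maximizers for the scale-invariant ratio, not normalized maximizers. The strong convergence in~\eqref{eq:bubble_v} then follows precisely because no mass is lost ($\sum_k\alpha_k=1$), as you say. Finally, the claim $\lambda_{N_k+1}(-\Delta-V^{(k)})=0$ for $N_k<N$ is obtained not by a minimality argument but simply by using $V^{(k)}$ as a trial potential in $L^{(N_k+1)}_{\gamma,d}=L^{(N)}_{\gamma,d}$.
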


Since $N\mapsto L^{(N)}_{\gamma,d}$ is non-decreasing, the equality~\eqref{eq:decomp_N} implies that
$$L^{(N)}_{\gamma,d}=L^{(m)}_{\gamma,d}\qquad\text{for all $\min_{1\leq k\leq K}N_k\leq m\leq N$.}$$

The functions $V^{(k)}$ are sometimes called `bubbles'. In the case $K=1$ we must have $N_1=N$; the sequence $(V_n)$ is precompact in $L^{\gamma+d/2}(\R^d)$  up to translations, and thus converges to a Lieb-Thirring optimizer for $L^{(N)}_{\gamma,d}$. When $K\geq2$, the limit~\eqref{eq:bubble_v} means that only the group of space translations can explain the non-compactness of a normalized maximizing sequence. Namely, the only possibility is that a sequence $V_n$ splits into $K$ independent `bubbles' $V^{(k)}$ placed very far apart. Each bubble generates at most $N_k<N$ eigenvalues and maximizes the problem $L^{(N_k)}_{\gamma,d}$. The spectrum of $-\Delta-V_n$ is asymptotically given by the union of the spectra of the $K$ operators $-\Delta-V^{(k)}$, counting multiplicities.

There are many results of the same kind in the literature on variational problems~\cite{Struwe-84,BreCor-85,Lions-87,Gerard-98,HmiKer-05}. The difficulty here is that the functional to be optimized in~\eqref{eq:variational_L} is highly nonlinear, since the eigenvalues depend on the potential in a very indirect and nonlocal way. In addition, the potential  only lives in $L^{\gamma+d/2}(\R^d)$ and no additional regularity is known \emph{a priori}. Our proof follows the usual strategy of extracting bubbles one after another using ideas from Lions' concentration-compactness method~\cite{Lions-84,Lewin-conc-comp}. The main difficulties are to show that our problem is essentially local (\emph{i.e.}, two arbitrary potentials far away generate essentially independent eigenvalues) and locally compact (the eigenvalues converge for a sequence $V_n$ of fixed compact support converging to some $V$ weakly but not necessarily strongly). The sub-criticality of our problem is better seen in terms of the corresponding eigenfunctions which live in $H^1(\R^d)$. The detailed proof is provided later in Section~\ref{sec:proof_bubbles} and it uses some inspiration from~\cite{EstLewSer-21b}. Our method could be useful to deal with other problems of spectral optimization theory.

Since any bubble $V^{(k)}$ is a maximizer for $L_{\gamma,d}^{(N)}$, Theorem~\ref{thm:bubble} immediately implies that there are Lieb-Thirring optimizers for all $N\geq1$.

\begin{corollary}[Existence of optimizers -- subcritical case]\label{cor:existence}
Let $\gamma>0$ if $d\geq2$ and $\gamma>1/2$ if $d=1$. Then $L^{(N)}_{\gamma,d}$ admits an optimizer for all $N\geq1$.
\end{corollary}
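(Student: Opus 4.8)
The plan is to read the statement off directly from the bubble decomposition of Theorem~\ref{thm:bubble}; all of the analytic work is contained in that theorem, so what is left is a one-line extraction argument. First I would note that the normalized variational problem~\eqref{eq:variational_L_normalized} admits a maximizing sequence at all: the admissible set is non-empty (rescale any non-zero $0\leq V\in L^{\gamma+d/2}(\R^d)$ to have unit $L^{\gamma+d/2}$-norm) and the supremum is finite, since $L^{(N)}_{\gamma,d}\leq NL^{(1)}_{\gamma,d}<\ii$. Hence there is a sequence $(V_n)$ of non-negative potentials with $\int_{\R^d}V_n^{\gamma+d/2}=1$ and $\sum_{j=1}^N|\lambda_j(-\Delta-V_n)|^\gamma\to L^{(N)}_{\gamma,d}$, which is exactly the input~\eqref{eq:choice_V_n} required by Theorem~\ref{thm:bubble}.

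Then I would apply Theorem~\ref{thm:bubble} to $(V_n)$. It produces integers $N_1,\dots,N_K\geq1$ with $\sum_k N_k=N$ and $L^{(N_k)}_{\gamma,d}=L^{(N)}_{\gamma,d}$, together with non-trivial non-negative bubbles $V^{(k)}\in L^{\gamma+d/2}(\R^d)$, each a maximizer of $L^{(N_k)}_{\gamma,d}$ and, when $N_k<N$, satisfying $\lambda_{N_k+1}(-\Delta-V^{(k)})=0$. Since $j\mapsto\lambda_j(-\Delta-V^{(k)})$ is non-decreasing and non-positive, the vanishing of $\lambda_{N_k+1}(-\Delta-V^{(k)})$ forces $\lambda_j(-\Delta-V^{(k)})=0$ for every $j\geq N_k+1$; hence $\sum_{j=1}^N|\lambda_j(-\Delta-V^{(k)})|^\gamma=\sum_{j=1}^{N_k}|\lambda_j(-\Delta-V^{(k)})|^\gamma$, and combined with $L^{(N_k)}_{\gamma,d}=L^{(N)}_{\gamma,d}$ this says that $V^{(k)}$ also realizes the supremum in~\eqref{eq:variational_L} at rank $N$ — the conclusion already recorded in the theorem.

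Finally I would simply take $V:=V^{(1)}$: it is a non-trivial non-negative element of $L^{\gamma+d/2}(\R^d)$ realizing the supremum defining $L^{(N)}_{\gamma,d}$, i.e.\ an optimizer in the sense of~\eqref{eq:variational_L}; and if a unit-norm representative is wanted one replaces it by $t^2V(t\,\cdot)$ for the appropriate $t>0$, using the scaling invariance noted before~\eqref{eq:variational_L_normalized}. There is no genuine obstacle at this stage — the only point needing a word of justification is the elementary monotonicity-and-sign fact about the min-max levels used above, everything else being Theorem~\ref{thm:bubble}.
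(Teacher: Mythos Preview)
Your proposal is correct and follows exactly the paper's approach: the corollary is deduced immediately from Theorem~\ref{thm:bubble}, since that theorem already records that each bubble $V^{(k)}$ is a maximizer for $L^{(N)}_{\gamma,d}$. Your extra paragraph verifying the case $N_k<N$ via monotonicity of the min-max levels is fine but unnecessary, as this fact is part of the statement of Theorem~\ref{thm:bubble} itself.
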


This corollary extends our previous existence result~\cite[Thm.~6]{FraGonLew-21} which only covered the case $\gamma\geq1$ since it was based on a	 duality principle only known under this additional constraint. The case $\gamma=1/2$ in dimension $d=1$ has been fully solved in~\cite{HunLieTho-98}. It is proved there that $L^{(N)}_{1/2,1}=L^{(1)}_{1/2,1}$ for all $N\geq1$ and that $V=c\delta_y$ (Dirac delta at a point $y$) are the only optimizers for any $c>0$ (allowing potentials being finite Borel measures). These potentials have the unique eigenvalue $-c^2/4$. The case $\gamma=0$ in dimensions $d\geq3$ (Cwikel-Lieb-Rozenblum inequality) will be considered in Section~\ref{sec:results_critical}.

\subsection{Euler-Lagrange equation}

Next, we show that any maximizer, when it exists, solves a nonlinear equation and we use this property to infer its regularity and decay at infinity.

\begin{theorem}[Euler-Lagrange equation -- subcritical case]\label{thm:equation}
Let $\gamma>0$ if $d\geq2$ and $\gamma>1/2$ if $d=1$. Let $N\in\N$ and assume that $V_*\geq0$ is a normalized optimizer for $L^{(N)}_{\gamma,d}$. Denote by $\lambda_j:=\lambda_j(-\Delta-V_*)$ the corresponding min-max values.
The operator $-\Delta-V_*$ has finitely many negative eigenvalues, that is, $\lambda_{j}=0$ for some large enough $j$. Let $M$ be the number of negative eigenvalues repeated according to their multiplicity, that is, the smallest integer such that $\lambda_{M+1}=0$. We have
\begin{equation}
V_*(x)=\left(\frac{2\gamma}{(d+2\gamma)L_{\gamma,d}^{(N)}}\sum_{j=1}^{\min(N,M)}|\lambda_j|^{\gamma-1}|u_j(x)|^2\right)^{\frac1{\gamma+\frac{d}2-1}},
 \label{eq:equation_V_optimal}
\end{equation}
where $u_j$ is any associated orthonormal system of eigenfunctions. In the case that $\lambda_{N+1}<0$, that is $M>N$, we have the spectral gap
\begin{equation}
\lambda_N< \lambda_{N+1}.
\label{eq:no-unfilled-shell}
\end{equation}
Finally, in any case $V_*$ is real-analytic and tends to 0 exponentially fast at infinity.
\end{theorem}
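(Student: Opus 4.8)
The plan is to treat the three assertions in order: (i) finiteness of the negative spectrum, (ii) the Euler--Lagrange equation~\eqref{eq:equation_V_optimal} together with the spectral gap~\eqref{eq:no-unfilled-shell}, and (iii) regularity and decay.

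For (i), I would first note that a normalized optimizer $V_*$ lies in $L^{\gamma+d/2}(\R^d)$, which for $\gamma>0$ (resp. $\gamma>1/2$ in $d=1$) is exactly the class for which the Cwikel--Lieb--Rozenblum / Lieb--Thirring bound gives $\sum_j|\lambda_j(-\Delta-V_*)|^\gamma<\infty$, so that $\lambda_j\to 0$; but finiteness of the \emph{number} of negative eigenvalues requires slightly more. The cleanest route is bootstrapping: once one knows $V_*\in L^p$ for all $p\in[\gamma+d/2,\infty)$ and $V_*(x)\to 0$ at infinity (which will follow from step (iii), but can be extracted separately here by a preliminary regularity argument on the equation), standard results on Schr\"odinger operators with potentials vanishing at infinity give a finite number of negative eigenvalues. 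Alternatively, I would argue directly from the variational characterization: if $-\Delta-V_*$ had infinitely many negative eigenvalues, one could truncate $V_*$ on a large ball to lose little of the $L^{\gamma+d/2}$ mass while keeping the $N$ lowest eigenvalues almost unchanged (by a Sobolev/compact-resolvent argument), contradicting optimality unless the tail is genuinely needed — and for potentials with compact support the negative spectrum is automatically finite. Combining these gives $\lambda_{M+1}=0$ for some finite $M$.

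For (ii), the heart of the matter is a careful first-variation computation of the nonsmooth functional $V\mapsto\sum_{j=1}^N|\lambda_j(-\Delta-V)|^\gamma$ under the constraint $\int V^{\gamma+d/2}=1$. The subtlety is that the map is not differentiable where eigenvalues cross or hit the essential spectrum; I would perturb $V_*\to V_*+t\,W$ with $W\in C_c^\infty$, use analytic perturbation theory (Kato) to track the eigenvalues below a fixed negative threshold, and apply the Feynman--Hellmann formula $\partial_t\lambda_j = -\langle u_j,W u_j\rangle$ for the eigenvalues that are strictly negative. Summing with weights $\gamma|\lambda_j|^{\gamma-1}$ and using a Lagrange multiplier $\mu>0$ for the constraint yields, for every admissible $W$,
\[
-\gamma\sum_{j=1}^{\min(N,M)}|\lambda_j|^{\gamma-1}\int_{\R^d} |u_j|^2\,W = \mu\Bigl(\gamma+\tfrac d2\Bigr)\int_{\R^d} V_*^{\gamma+\frac d2-1}\,W,
\]
which gives~\eqref{eq:equation_V_optimal} pointwise after identifying $\mu$ via testing against $W=V_*$ (using $\sum_j\lambda_j\partial_{\lambda_j}|\lambda_j|^\gamma = \gamma\sum_j|\lambda_j|^\gamma$ and the homogeneity of the denominator, which pins $\mu$ in terms of $L^{(N)}_{\gamma,d}$). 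The spectral gap~\eqref{eq:no-unfilled-shell} is then forced: if $\lambda_N=\lambda_{N+1}<0$, the eigenvalue $\lambda_N$ has multiplicity $\geq 2$ and we have freedom in choosing which eigenfunctions enter the sum defining $V_*$; averaging over the unitary group acting on the degenerate eigenspace, or else noting that the right-hand side of~\eqref{eq:equation_V_optimal} would then fail to be well-defined unless the full degenerate shell is included — contradicting $\min(N,M)=N<M$ — produces the strict inequality. I expect this degeneracy/well-definedness argument for~\eqref{eq:no-unfilled-shell} to be the main obstacle, since it requires showing that a partially filled degenerate shell is genuinely incompatible with stationarity, presumably by exhibiting a variation $W$ supported where two competing eigenfunctions differ that strictly increases the functional.

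For (iii), I would bootstrap the fixed-point equation~\eqref{eq:equation_V_optimal}. Writing $\rho:=\sum_{j=1}^{\min(N,M)}|\lambda_j|^{\gamma-1}|u_j|^2$, each $u_j\in H^1$ solves $(-\Delta+|\lambda_j|)u_j = V_* u_j$ with $V_*=c\,\rho^{1/(\gamma+d/2-1)}$; since $V_*\in L^{\gamma+d/2}$ one gets $u_j\in L^q$ for a range of $q$ by Sobolev embedding, hence $\rho$ and then $V_*$ lie in better spaces, and iterating (standard elliptic $L^p$-theory plus the subcritical relation between the exponents) gives $V_*\in L^\infty\cap C^\infty$ and each $u_j\in C^\infty$. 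Exponential decay of $u_j$ follows from Agmon estimates, using $\lambda_j<0$ strictly and $V_*\to 0$; this forces $\rho$, and therefore $V_*$, to decay exponentially. Finally, on the region where $V_*>0$ the equation $V_* = c\,\rho^{1/(\gamma+d/2-1)}$ with $\rho$ a finite sum of squared solutions of a linear elliptic equation with analytic (indeed constant after substituting $V_*$) coefficients shows, via elliptic analytic regularity for the coupled system $(-\Delta+|\lambda_j|)u_j = c\,(\sum_l|\lambda_l|^{\gamma-1}|u_l|^2)^{1/(\gamma+d/2-1)} u_j$, that the $u_j$ and hence $V_*$ are real-analytic wherever $V_*>0$; a separate argument (e.g. that $V_*>0$ everywhere, which follows from the strict positivity of $\rho$ since $u_1>0$ as the ground state of $-\Delta-V_*$) removes the caveat and yields real-analyticity on all of $\R^d$.
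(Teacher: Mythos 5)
Your overall architecture (Feynman--Hellmann variation for the Euler--Lagrange equation, elliptic bootstrap for regularity, Agmon estimates for decay) matches the paper's, and your derivation of~\eqref{eq:equation_V_optimal} in the nondegenerate case is essentially the paper's Step 1. But there is a genuine gap precisely where you flag "the main obstacle": the spectral gap~\eqref{eq:no-unfilled-shell}.

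Your two suggested routes do not close the gap. The "well-definedness" argument does not work: if $\lambda_N=\lambda_{N+1}$ and the shell is only partially filled, the formula~\eqref{eq:equation_V_optimal} does indeed depend on the choice of orthonormal basis in the degenerate eigenspace, but $V_*$ is an externally given object, so the ambiguity is merely a sign that your first-variation computation (which assumed differentiability of $\eps\mapsto\lambda_j(-\Delta-V_*-\eps\chi)$) is not valid at a degenerate eigenvalue. "Averaging over the unitary group" does not resolve this either, since the averaged $V_*$ need not remain an optimizer nor equal the original $V_*$. What the paper actually does is work directly with the first-order condition at a crossing. By degenerate perturbation theory the $m$ eigenvalue branches through $\lambda_N$ behave as $\lambda_N-\eps\mu_j^\chi+o(\eps)$, where the $\mu_j^\chi$ are the eigenvalues of the $m\times m$ matrix $\bigl(\int_{\R^d}\overline{u_j}u_k\chi\bigr)$. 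Because the Riesz sum in~\eqref{eq:variational_L_normalized} always picks out the $K$ most negative branches, for $\eps>0$ one selects the $K$ largest $\mu_j^\chi$ and for $\eps<0$ the $K$ smallest. Stationarity then gives a two-sided inequality sandwiching a fixed linear functional of $\chi$ between the sum of the $K$ largest and the sum of the $K$ smallest $\mu_j^\chi$. Since the lower bound is always $\leq$ the upper bound, equality is forced, which means the matrix $\bigl(\int\overline{u_j}u_k\chi\bigr)$ is a multiple of the identity \emph{for every} $\chi\in L^{\gamma+d/2}$. Varying $\chi$ then yields $u_j\overline{u_k}\equiv 0$ and $|u_j|^2\equiv|u_k|^2$ for $j\neq k$, which is absurd. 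This matrix/sandwich argument is the key idea missing from your proposal; without it, assertion~\eqref{eq:no-unfilled-shell} is unproven.

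A secondary issue concerns the logical order around finiteness of the negative spectrum. The paper does \emph{not} establish this at the outset: it allows $M=\infty$ initially, reduces WLOG to $\lambda_N<0$, proves the equation and the gap, then the regularity bootstrap and exponential decay, and only at the very end deduces that $M<\infty$ from the exponential decay (citing classical criteria for finiteness of the discrete spectrum for rapidly decaying potentials). Your "alternative" truncation argument does not work as written: truncating $V_*$ to a ball decreases the denominator $\int V^{\gamma+d/2}$ as well as possibly the numerator, and there is no a priori reason the ratio should strictly increase; in fact the tail of $V_*$, however small, could help the low eigenvalues by an amount not controlled by the mass it carries. You should simply follow the paper's order and defer finiteness to after the decay estimates, rather than trying to prove it first.

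The regularity and decay steps are broadly correct, but note two fine points the paper needs: (a) the bootstrap in $L^r$ must begin at $r_0=(d+2\gamma)(p-1)$ with $p=(\gamma+d/2)'$ and verify that the iteration exponent $r^*$ genuinely increases, which uses $\gamma>0$ so that $r_0>d(p-1)$ (the unstable fixed point of the iteration); and (b) real-analyticity on all of $\R^d$ requires $V_*>0$ everywhere, which the paper obtains from Harnack's inequality for $u_1$, as you correctly indicate.
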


The proof is provided in Section~\ref{sec:proof_equation}. The inequality~\eqref{eq:no-unfilled-shell} means that there are ``no unfilled shells for Lieb-Thirring maximizers'' in the spirit of what is known in Hartree-Fock theory~\cite{BacLieLosSol-94}. The last filled eigenvalue $\lambda_{N'}$ with $N':=\min(N,M)$ is always strictly less than $\lambda_{N'+1}$. The latter can either be an eigenvalue or be equal to $0$ (the bottom of the essential spectrum).

The formula~\eqref{eq:equation_V_optimal} means that the corresponding eigenfunctions $u_j$ solve a system of $N'$ coupled nonlinear Schr\"odinger equations in the form
\begin{equation}
 \left(-\Delta-c_{\gamma,d}^{(N)}\left(\sum_{j=1}^{N'}|\lambda_j|^{\gamma-1}|u_j|^2\right)^{\frac1{\gamma+\frac{d}2-1}}\right)u_j=\lambda_j\,u_j
 \label{eq:NLS}
\end{equation}
with the constant
\begin{equation}
c_{\gamma,d}^{(N)}:=\left( \frac{2\gamma}{(d+2\gamma)L_{\gamma,d}^{(N)}}\right)^{\frac1{\gamma+\frac{d}2-1}}.
\label{eq:c_gamma_d}
\end{equation}

\subsection{Application to $\gamma>\max(0,2-d/2)$}

In this subsection we consider the case $\gamma>\max(0,2-d/2)$, where we can show that $N\mapsto L^{(N)}_{\gamma,d}$ cannot be constant for large $N$.

\begin{theorem}[Existence for $\gamma>\max(0,2-d/2)$]\label{thm:existence}
Assume that
\begin{equation}
\gamma>\max\left(0,2-\frac{d}2\right).
\label{eq:cond_gamma}
\end{equation}
Then for every $N\geq1$,

\smallskip

\begin{itemize}[leftmargin=* ,parsep=0cm,itemsep=0cm,topsep=0cm]
 \item up to translations, all the normalized maximizing sequences $(V_n)$ for $L^{(N)}_{\gamma,d}$ are precompact and converge to a maximizer after extraction of a subsequence;

 \smallskip

 \item $L^{(2N)}_{\gamma,d}>L^{(N)}_{\gamma,d}$, hence in particular
\end{itemize}
\begin{equation}
\boxed{L_{\gamma,d}>L^{(N)}_{\gamma,d},\qquad\text{for all $N\geq1$.}}
\label{eq:not_finite}
\end{equation}
\end{theorem}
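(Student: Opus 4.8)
The plan is to deduce the theorem from two inputs: the bubble decomposition of Theorem~\ref{thm:bubble}, which reduces precompactness to ruling out genuine splitting, and the strict binding inequality $L^{(2N)}_{\gamma,d}>L^{(N)}_{\gamma,d}$, which propagates by induction to give the strict inequalities for all $N$. Granting the binding inequality for the moment, here is how everything fits together. Suppose $(V_n)$ is a normalized maximizing sequence for $L^{(N)}_{\gamma,d}$ and let $V^{(1)},\dots,V^{(K)}$ be the bubbles produced by Theorem~\ref{thm:bubble}, with ranks $N_1,\dots,N_K\ge 1$ summing to $N$ and each satisfying $L^{(N_k)}_{\gamma,d}=L^{(N)}_{\gamma,d}$. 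If $K\ge 2$, then some $N_k<N$, and since $L^{(N_k)}_{\gamma,d}=L^{(N)}_{\gamma,d}$ while $m\mapsto L^{(m)}_{\gamma,d}$ is non-decreasing, we get $L^{(m)}_{\gamma,d}=L^{(N_k)}_{\gamma,d}$ for all $N_k\le m\le N$; in particular $L^{(N_k)}_{\gamma,d}=L^{(2N_k)}_{\gamma,d}$ if $2N_k\le N$, and more generally $L^{(N_k)}_{\gamma,d}=L^{(\min(2N_k,N))}_{\gamma,d}$. Either way, iterating the binding inequality from $N_k$ upward forces $L^{(N_k)}_{\gamma,d}<L^{(2^m N_k)}_{\gamma,d}$ for any $m$ with $2^{m-1}N_k\le N$, contradicting the equalities just obtained. (Concretely: since $N_k\le N/2<N$, apply $L^{(N_k)}_{\gamma,d}<L^{(2N_k)}_{\gamma,d}\le L^{(N)}_{\gamma,d}$, contradicting $L^{(N_k)}_{\gamma,d}=L^{(N)}_{\gamma,d}$.) Hence $K=1$, so by the last paragraph before Corollary~\ref{cor:existence} the sequence $(V_n)$ converges in $L^{\gamma+d/2}(\R^d)$, up to translations, to a maximizer. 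The strict inequality $L_{\gamma,d}>L^{(N)}_{\gamma,d}$ then follows: $L_{\gamma,d}=\lim_{M\to\ii}L^{(M)}_{\gamma,d}\ge L^{(2N)}_{\gamma,d}>L^{(N)}_{\gamma,d}$.

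It therefore remains to prove $L^{(2N)}_{\gamma,d}>L^{(N)}_{\gamma,d}$ under the hypothesis $\gamma>\max(0,2-d/2)$. The strategy, following~\cite{GonLewNaz-21,FraGonLew-21}, is a trial-potential computation: take a normalized optimizer $V_*$ for $L^{(N)}_{\gamma,d}$ (which exists by Corollary~\ref{cor:existence}), which by Theorem~\ref{thm:equation} is real-analytic, decays exponentially, and generates $M\ge N$ negative eigenvalues with $\lambda_N<\lambda_{N+1}$ when $M>N$; in fact one should arrange that $V_*$ has exactly $N$ eigenvalues at or below the relevant level, using the bubble structure and the no-unfilled-shell gap. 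Then form, for a large separation parameter $R$ and a unit vector $e\in\bS^{d-1}$, the two-bump potential
\begin{equation}
W_R(x)=\bigl(V_*(x-\tfrac{R}{2}e)^{\gamma+d/2}+V_*(x+\tfrac{R}{2}e)^{\gamma+d/2}\bigr)^{\frac{1}{\gamma+d/2}},
\label{eq:trial_W}
\end{equation}
chosen so that $\int W_R^{\gamma+d/2}=2$, i.e.\ after normalization it competes in~\eqref{eq:variational_L_normalized} at rank $2N$. One then expands $\sum_{j=1}^{2N}|\lambda_j(-\Delta-W_R)|^\gamma$ as $R\to\ii$. The leading term is $2L^{(N)}_{\gamma,d}$ (the two wells decouple at this order, each contributing its $N$ eigenvalues), and the correction is governed by the exponentially small tunnelling interaction between the two copies: the eigenvalues split into symmetric/antisymmetric combinations with a splitting of order $e^{-\sqrt{|\lambda_j|}\,R}$, and there is also a first-order energy shift from the overlap of each eigenfunction with the tail of the other potential (since $W_R^{\gamma+d/2}$ is not simply the sum $V_*(\cdot-\tfrac R2 e)^{\gamma+d/2}+V_*(\cdot+\tfrac R2 e)^{\gamma+d/2}$ near each center, but it is exponentially close to it). The key computation is to show that the sum of all these corrections to $\sum_{j=1}^{2N}|\lambda_j|^\gamma$, after dividing by the (unchanged to this order) normalization, is strictly positive for $R$ large. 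This is where the condition $\gamma>\max(0,2-d/2)$ enters: it makes the dominant tunnelling contribution have the favourable sign. The quantity $2-d/2$ appears because the relevant Green's function / Agmon-type decay estimates produce a factor whose polynomial prefactor and the $\gamma$-power interact so that the leading correction is positive precisely when $\gamma>2-d/2$ (and positivity of $\gamma$ is needed separately so that we are in the subcritical regime with genuine eigenvalues and exponential decay). Once $\sum_{j=1}^{2N}|\lambda_j(-\Delta-W_R)|^\gamma>2L^{(N)}_{\gamma,d}-o(1)$ with a strictly positive leading correction, we conclude $L^{(2N)}_{\gamma,d}\ge \bigl(\sum_{j=1}^{2N}|\lambda_j(-\Delta-W_R)|^\gamma\bigr)/2^{\frac{2\gamma}{2\gamma+d}}\cdot(\text{normalization})>L^{(N)}_{\gamma,d}$ for $R$ large.

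The main obstacle is precisely this tunnelling expansion: one needs sufficiently sharp control of the eigenvalue perturbation to second order in the overlap (not just the sign of the first-order shift, since the first-order shift may be subdominant or may have an ambiguous sign depending on the gluing in~\eqref{eq:trial_W}), and one must carefully identify which term dominates as a function of $d$ and $\gamma$. Technically this requires: (i) exponential decay bounds on $V_*$ and on each eigenfunction $u_j$ with the correct rate $\sqrt{|\lambda_j|}$ (available from Theorem~\ref{thm:equation} and Agmon estimates); (ii) an analysis of the $2N$-dimensional nearly-degenerate eigenspace of $-\Delta-W_R$ spanned by the translated eigenfunctions, reducing the problem to a $2N\times 2N$ matrix whose off-diagonal blocks are the exponentially small interaction terms; (iii) summing $|\lambda_j^{\rm sym}|^\gamma+|\lambda_j^{\rm anti}|^\gamma$ and using convexity/concavity of $t\mapsto |t|^\gamma$ together with the nonlinear dependence of $W_R$ on the two bumps to extract the sign. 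The bookkeeping of these exponentially small terms, and verifying that no competing term of the same or larger order cancels the gain, is the delicate heart of the argument; everything else is a soft consequence of Theorem~\ref{thm:bubble} as sketched above.
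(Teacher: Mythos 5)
Your overall plan matches the paper's: obtain the binding inequality $L^{(2N)}_{\gamma,d}>L^{(N)}_{\gamma,d}$ by a tunnelling computation with a two-bump trial potential, and combine it with the bubble decomposition of Theorem~\ref{thm:bubble} to rule out splitting. Your first paragraph correctly reproduces the paper's Step~2 (the smallest bubble has rank $N_1\le N/2$, so $L^{(N_1)}_{\gamma,d}<L^{(2N_1)}_{\gamma,d}\le L^{(N)}_{\gamma,d}$, contradicting $L^{(N_1)}_{\gamma,d}=L^{(N)}_{\gamma,d}$). However, your sketch of the binding inequality itself has a wrong mechanism at its core.

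First, the trial potential. You glue the two copies by setting $W_R=\bigl(V_*(\cdot-\tfrac R2 e)^{\gamma+d/2}+V_*(\cdot+\tfrac R2 e)^{\gamma+d/2}\bigr)^{1/(\gamma+d/2)}$, which makes the $L^{\gamma+d/2}$ norm exactly additive. The paper instead uses the Euler--Lagrange equation $V_*=(\beta\rho)^{p-1}$ with $\rho=\sum_j|\lambda_j|^{\gamma-1}|u_j|^2$ and $p=(\gamma+\tfrac d2)/(\gamma+\tfrac d2-1)$, and glues at the level of the \emph{densities}: $V_R=\bigl(\beta\rho^{(+)}+\beta\rho^{(-)}\bigr)^{p-1}$, also with two extra rotations $\cR_\pm$. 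The paper explicitly warns that the obvious choice $V^{(+)}+V^{(-)}$ does not work, so the specific nonlinear gluing matters; your variant is yet a third choice, and whether the expansion lands on the right side with the right order requires a new verification that you do not supply. (A side computational slip: since $\int W_R^{\gamma+d/2}=2$, the conclusion you want is $L^{(2N)}_{\gamma,d}\ge\tfrac12\sum_{j=1}^{2N}|\lambda_j(-\Delta-W_R)|^\gamma$, not a bound with a factor $2^{-2\gamma/(2\gamma+d)}$.)

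Second, and more importantly, you attribute the role of the hypothesis $\gamma>\max(0,2-\tfrac d2)$ to the \emph{sign} of the leading tunnelling correction, saying it ``makes the dominant tunnelling contribution have the favourable sign.'' This is not where the condition enters. In the paper's expansion the denominator gains $A_R>0$ and the numerator gains $\tfrac{\gamma}{\beta}A_R$, so that
\[
L^{(2N)}_{\gamma,d}\ \ge\ L^{(N)}_{\gamma,d}\Bigl(1+\tfrac12\bigl(\tfrac d2+\gamma-1\bigr)A_R\Bigr)+O\bigl(B_R^2+e_R^2+A_R^2\bigr),
\]
and the coefficient $\tfrac d2+\gamma-1$ is \emph{always} positive under the standing assumption $\gamma+\tfrac d2>1$. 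The condition $\gamma>2-\tfrac d2$, equivalently $p<2$, is used solely to guarantee that this favourable leading term $A_R\sim e^{-p\sqrt{|\lambda_N|}R}$ dominates the error $e_R^2\sim e^{-2\sqrt{|\lambda_N|}R}$ coming from the Gram-matrix correction: indeed $e^{-p\sqrt{|\lambda_N|}R}\gg e^{-2\sqrt{|\lambda_N|}R}$ precisely when $p<2$. Relatedly, the symmetric/antisymmetric eigenvalue splitting you emphasize is a second-order, possibly \emph{unfavourable} effect: for $0<\gamma<1$ the map $t\mapsto|t|^\gamma$ is concave, so symmetric splitting of an eigenvalue pair \emph{decreases} the Riesz sum at quadratic order, and this contribution is in any case of the subdominant order $e^{-2\sqrt{|\lambda_N|}R}$. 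The gain does not come from splitting at all; it comes from the nonlinear superposition of the densities. Your proposal would therefore need to be reworked along the paper's lines before it could be accepted as a proof of the binding inequality.
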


The inequality~\eqref{eq:not_finite} was shown in~\cite{FraGonLew-21} under the condition that $\gamma\geq1$. The compactness of all the normalized maximizing sequences and the validity of the strict inequality $L^{(2N)}_{\gamma,d}>L^{(N)}_{\gamma,d}$ for all $N\geq1$ are new, even for $\gamma\geq1$.

The inequality~\eqref{eq:not_finite} means that the finite-rank Lieb-Thirring constant $L^{(N)}_{\gamma,d}$ is never optimal for the unconstrained Lieb-Thirring problem $L_{\gamma,d}$. We note, however, that this does not prevent the existence of an optimal potential for $L_{\gamma,d}$ that would have infinitely many eigenvalues. We refer to~\cite{FraGonLew-21b} for a possible scenario concerning the behavior of a sequence $V^{(N)}$ of optimal potentials for $L^{(N)}_{\gamma,d}$ in the limit $N\to\ii$ and for the implications with regard to the Lieb-Thirring conjecture.

We will see later in Corollary~\ref{cor:L2=L1} that $L^{(2)}_{0,d}=L^{(1)}_{0,d}$  in dimensions $d\geq3$ at $\gamma=0$. Hence the constraint~\eqref{eq:cond_gamma} on $\gamma$ is optimal in dimensions $d\geq4$. It is also optimal in dimension $d=1$. In fact, at $\gamma=3/2$ all the $L^{(N)}_{3/2,1}$ are equal to each other,
\begin{equation}
L^{(N)}_{3/2,1}=L_{3/2,1}=\frac3{16},\qquad\forall N\in\N,
\label{eq:gamma3/2}
\end{equation}
as was proved in~\cite{GarGreeKruMiu-74,LieThi-76,BenLos-00} and is discussed in the next subsection. There are optimizers with exactly $N$ negative eigenvalues for every $N$, which are therefore not maximizers for $L^{(m)}_{\gamma,d}$ for $m<N$. Finally, there are also non-compact normalized maximizing sequences up to translations for $L^{(N)}_{\gamma,d}$ for every $N\geq2$. For $\gamma<3/2$ numerical simulations~\cite{LieThi-76,Levitt-14,FraGonLew-21} seem to indicate that $L^{(N)}_{\gamma,1}=L^{(1)}_{\gamma,1}$ but with all optimizers having only one negative eigenvalue, as it is the case for $\gamma=1/2$~\cite{HunLieTho-98}. We do not know if the condition~\eqref{eq:cond_gamma} is optimal in dimensions $d\in\{2,3\}$.

\subsection{Application to $\gamma=3/2$ in $d=1$}
In order to describe in detail what maximizing sequences are doing at $\gamma=3/2$ in one dimension, we introduce Korteweg-de Vries (KdV) $N$-solitons. Following for instance~\cite{KayMos-56,KilVis-20_ppt}, for $\vec\beta=(\beta_1,...,\beta_N)$ with $\beta_1>\beta_2>\cdots>\beta_N>0$ and $\vec{X}=(X_1,...,X_N)\in\R^N$, we introduce the $N\times N$ matrix
$$A_{jk}(x):=\delta_{jk}+\frac{1}{\beta_j+\beta_k}e^{-\beta_j(x-X_j)-\beta_k(x-X_k)}$$
and the function
\begin{equation}
Q_{\vec{\beta},\vec{X}}(x):=-2\frac{d^2}{\rd x^2}\log\det(A(x)).
\label{eq:KdV-N-soliton}
\end{equation}
Then $Q_{\vec{\beta},\vec{X}(t)}$ is a solution of the KdV equation
$$\partial_tq+\partial_x^3q-6q\partial_xq=0$$
with $X_j(t)=X_j+4\beta_j^2t$, that is, representing $N$ solitons moving to the right at speeds $4\beta_1^2>\cdots >4\beta_N^2$. The $L^2(\R)$ norm is given by
$$\int_\R Q_{\vec{\beta},\vec{X}}(x)^2\rd x=\frac{16}3\sum_{j=1}^N\beta_j^3$$
and the spectrum of the corresponding Schrödinger operator is
\begin{equation}
 \sigma\left(-\Delta-Q_{\vec{\beta},\vec{X}}\right)=\left\{-\beta_1^2<\cdots<-\beta_N^2\right\}\cup[0,\ii).
 \label{eq:spectrum_KdV}
\end{equation}
There are exactly $N$ distinct negative eigenvalues. We call
\begin{multline}
 \cM^N:=\bigg\{-Q_{\vec{\beta},\vec{X}}\ :\quad \vec{\beta}=(\beta_1,...,\beta_N),\ \quad \beta_1>\beta_2>\cdots>\beta_N>0\\
 \sum_{j=1}^N\beta_j^3=\frac{3}{16},\quad  \vec{X}\in\R^N\bigg\}
 \label{eq:KdV-manifold}
\end{multline}
the \emph{manifold of these $L^2$--normalized KdV $N$-solitons with a reversed sign}. For $n=1$ we use the convention that
$$\cM^1=\left\{-\frac{2\left(\frac3{16}\right)^{\frac23}}{\cosh^{2}\left(\left(\frac3{16}\right)^{\frac13}x-X\right)}\ :\qquad X\in\R\right\}$$
which are the well-known $L^2$--normalized optimizers for $L^{(1)}_{3/2,1}$. Although $\cM^1$ is obviously compact modulo translations, this is not the case of $\cM^N$ for $N\geq2$. Taking for instance $\vec{X}_n=n(1,2,...,N)$ and $\vec\beta=(3/16N)^{\frac13}(1,...,1)$ one obtains a sequence of $N$-solitons behaving asymptotically like a superposition of $N$ independent $1$-solitons. It is convenient to introduce the set containing all the normalized $m$-solitons for $m\leq N$
\begin{equation}
\cM^{\leq N}:=\bigcup_{m=1}^N \cM^m.
\label{eq:closure_M_N}
\end{equation}
After taking some $\beta_j$ to 0 we see that $\cM^{\le N}$ is just the strong $L^2$--closure of $\cM^N$ (the weak closure of $\cM^N$ is different, since solitons can escape to infinity and carry some mass).

\begin{theorem}[Behavior for $\gamma=3/2$ in 1D]\label{thm:3/2}
Let $\gamma=3/2$, $d=1$ and $N\in\N$. The $L^2(\R)$--normalized maximizers for $L^{(N)}_{3/2,1}$ are exactly the functions of $\cM^{\leq N}$, that is, minus the KdV $m$-solitons with $m\leq N$. Those have exactly $m\leq N$ negative eigenvalues. The only maximizers with $N$ negative eigenvalues are the functions in $\cM^N$.
Any normalized maximizing sequence $(V_n)$ for $L^{(N)}_{3/2,1}$ as in~\eqref{eq:choice_V_n} satisfies
\begin{equation}
\lim_{n\to\ii}\rd_{L^2(\R)}\big(V_n,\cM^N\big)=\lim_{n\to\ii}\rd_{L^2(\R)}\big(V_n,\cM^{\leq N}\big)=0.
\label{eq:limit_KdV_manifold}
\end{equation}
\end{theorem}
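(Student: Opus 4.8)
\emph{Step 1: identifying the maximizers.} The plan is to first make rigorous the classical computation underlying \eqref{eq:gamma3/2}, and then analyse maximizing sequences via the bubble decomposition of Theorem~\ref{thm:bubble}. The main input for the first part is the second Korteweg--de Vries trace identity: for $V\geq0$ in the short-range class — in particular for any real-analytic, exponentially decaying $V\geq0$ — one has
\[
\sum_{j\geq1}\big|\lambda_j(-\Delta-V)\big|^{3/2}=\frac{3}{16}\int_\R V(x)^2\,\rd x-\cR(V),\qquad\cR(V)\geq0,
\]
where $\cR(V)$ is a non-negative functional of the reflection coefficient of $-\Delta-V$ that vanishes if and only if $V$ is reflectionless. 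Let $V_*\geq0$ be an $L^2(\R)$--normalized maximizer for $L^{(N)}_{3/2,1}$. By Theorem~\ref{thm:equation} it is real-analytic, decays exponentially, and has finitely many, say $M$, negative eigenvalues, so the identity applies. If $M>N$ then $\lambda_{N+1}(-\Delta-V_*)<0$, and using \eqref{eq:gamma3/2},
\[
L^{(N)}_{3/2,1}=\sum_{j=1}^{N}|\lambda_j|^{3/2}<\sum_{j=1}^{M}|\lambda_j|^{3/2}\leq\frac{3}{16}\int_\R V_*^2=\frac{3}{16}=L^{(N)}_{3/2,1},
\]
a contradiction. Hence $M\leq N$, all inequalities above become equalities, and $\cR(V_*)=0$: the optimizer is reflectionless with $M\leq N$ bound states. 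By classical inverse scattering theory it must then equal $-Q_{\vec\beta,\vec X}$ for some KdV $M$--soliton as in \eqref{eq:KdV-N-soliton}, and $\int V_*^2=1$ together with $\int Q_{\vec\beta,\vec X}^2=\tfrac{16}{3}\sum_j\beta_j^3$ forces $\sum_j\beta_j^3=3/16$, i.e. $V_*\in\cM^M\subseteq\cM^{\leq N}$. Conversely, for $W=-Q_{\vec\beta,\vec X}\in\cM^m$ with $m\leq N$, \eqref{eq:spectrum_KdV} gives that $-\Delta-W$ has exactly the negative eigenvalues $-\beta_1^2,\dots,-\beta_m^2$, while $\int W^2=\tfrac{16}{3}\sum_j\beta_j^3=1$, so $\sum_{j=1}^N|\lambda_j(-\Delta-W)|^{3/2}=\sum_{j=1}^m\beta_j^3=3/16$ and $W$ is a maximizer. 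This shows the $L^2(\R)$--normalized maximizers are exactly $\cM^{\leq N}$; by \eqref{eq:spectrum_KdV} those in $\cM^m$ have exactly $m$ negative eigenvalues, so the maximizers with $N$ negative eigenvalues are precisely $\cM^N$.

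\emph{Step 2: reduction of the statement about maximizing sequences.} Since $\rd_{L^2(\R)}(\,\cdot\,,S)=\rd_{L^2(\R)}(\,\cdot\,,\overline S)$ for any set $S$, and, as recalled after \eqref{eq:closure_M_N}, $\cM^{\leq N}$ is the strong $L^2(\R)$--closure of $\cM^N$, the two quantities in \eqref{eq:limit_KdV_manifold} coincide, so it suffices to prove $\rd_{L^2(\R)}(V_n,\cM^{\leq N})\to0$. Apply Theorem~\ref{thm:bubble}: along a subsequence, $V_n=\sum_{k=1}^K V^{(k)}(\cdot-x_n^{(k)})+o(1)$ in $L^2(\R)$, with $\sum_kN_k=N$, each $0\neq V^{(k)}$ a maximizer for $L^{(N_k)}_{3/2,1}$, and $|x_n^{(k)}-x_n^{(k')}|\to\infty$ for $k\ne k'$. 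Because the centers diverge, $\|V_n\|_{L^2}^2=\sum_k\|V^{(k)}\|_{L^2}^2+o(1)$, so $c_k:=\|V^{(k)}\|_{L^2}^2>0$ and $\sum_kc_k=1$; rescaling $V^{(k)}$ by the symmetry $t^2V(t\cdot)$ to unit $L^2(\R)$ norm, applying Step~1 to $L^{(N_k)}_{3/2,1}$, and undoing the rescaling shows $V^{(k)}=-Q_{\vec\beta^{(k)},\vec X^{(k)}}$ is minus a KdV $m_k$--soliton potential with $m_k\leq N_k$ and $\sum_j(\beta^{(k)}_j)^3=\tfrac{3}{16}c_k$. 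The combined parameters thus form a family of $M_0:=\sum_km_k\leq N$ positive numbers with total cube $3/16$. Everything is now reduced to the following claim: the well-separated superposition $W_{\vec y}:=\sum_{k=1}^K\big(-Q_{\vec\beta^{(k)},\vec X^{(k)}}\big)(\cdot-y_k)$ satisfies $\rd_{L^2(\R)}(W_{\vec y},\cM^{\leq N})\to0$ as $\min_{k\ne k'}|y_k-y_{k'}|\to\infty$; applying it with $y_k=x_n^{(k)}$ and combining with $\|V_n-W_{\vec y}\|_{L^2}\to0$ gives $\rd_{L^2(\R)}(V_n,\cM^{\leq N})\to0$, as desired.

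\emph{Step 3: the key claim, and the main obstacle.} To prove the claim I would use the explicit determinant formula \eqref{eq:KdV-N-soliton}. First suppose the $M_0$ combined parameters are pairwise distinct; order them as $\beta_1>\dots>\beta_{M_0}>0$ and choose $\vec X$ so that the solitons carrying the parameters of cluster $k$ are grouped near $y_k$. When $\min_{k\ne k'}|y_k-y_{k'}|$ is large, the Cauchy-type matrix $A(x)$ in \eqref{eq:KdV-N-soliton} is, on the region where cluster $k$ lives, an exponentially small perturbation of its diagonal block indexed by that cluster; hence $\log\det A(x)$ equals the sum of the cluster contributions up to errors exponentially small in the separation, and after two $x$--derivatives $Q_{\vec\beta,\vec X}=-2(\log\det A)''$ converges in $L^2(\R)$ to $\sum_kQ_{\vec\beta^{(k)},\vec X^{(k)}}(\cdot-y_k)$. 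Since the parameters have total cube $3/16$, the limit $-Q_{\vec\beta,\vec X}$ lies in $\cM^{M_0}\subseteq\cM^{\leq N}$, which proves the claim in this case. If parameters coincide across clusters, no genuine soliton carries exactly this multiset, so I would first replace each offending $\beta^{(k)}_j$ by $\beta^{(k)}_j+\eta$ with $\eta>0$ small and rescale to restore total cube $3/16$ — an $O(\eta)$ change in $L^2(\R)$ for each bubble — apply the distinct-parameter case, and let $\eta\to0$ after $n\to\infty$. The main obstacle is precisely this claim: making rigorous and quantitative the cluster decoupling of a KdV multi-soliton from the determinant formula, i.e. controlling the block factorization of $\det A(x)$ with errors exponentially small in the separation \emph{uniformly in $x\in\R$} and surviving two derivatives and integration over $\R$. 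The rescaling bookkeeping for the bubbles of Theorem~\ref{thm:bubble}, the coincident-parameter perturbation, and the standard facts used in Step~1 (the trace identities and the inverse scattering characterization of reflectionless potentials) are comparatively routine.
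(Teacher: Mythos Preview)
Your argument is correct and follows the same overall route as the paper: identify maximizers via the KdV trace formula and inverse scattering (your Step~1 is the content of the paper's Lemma~\ref{lem:LT=KdV}, with the case $M>N$ organized slightly differently), then feed a maximizing sequence into the bubble decomposition of Theorem~\ref{thm:bubble} and recognize each bubble as a soliton. The only substantive difference is your Step~3. What you flag as the ``main obstacle''---showing that a well-separated superposition of lower-order solitons is $L^2$-close to a genuine multi-soliton in $\cM^{M_0}$---is precisely the content of~\cite[Prop.~3.1]{KilVis-20_ppt}, which the paper simply cites: given $\vec\beta=\bigcup_k\vec\beta^{(k)}$ there exist $\vec Y_n$ with $\|Q_{\vec\beta,\vec Y_n}-\sum_kQ_{\alpha_k\vec\beta^{(k)},\vec X^{(k)}}(\cdot-x_n^{(k)})\|_{L^2}\to0$. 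So your determinant/block-factorization analysis, while a reasonable plan, is unnecessary; invoking that proposition collapses Step~3 to a one-line citation. Your perturbation argument for coincident parameters across clusters is a sensible extra precaution that the paper does not spell out.
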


The details of the proof are provided later in Section~\ref{sec:proof_KdV}.

%%%%%%%%%%%%%%%%%%%%%%%%%%%%%%%%%%%%%%%%%%%%%%%%%%%%%%%%%%%%%%%%
%%%%%%%%%%%%%%%%%%%%%%%%%%%%%%%%%%%%%%%%%%%%%%%%%%%%%%%%%%%%%%%%
\section{Main results in the critical case (CLR inequality)}\label{sec:results_critical}
%%%%%%%%%%%%%%%%%%%%%%%%%%%%%%%%%%%%%%%%%%%%%%%%%%%%%%%%%%%%%%%%
%%%%%%%%%%%%%%%%%%%%%%%%%%%%%%%%%%%%%%%%%%%%%%%%%%%%%%%%%%%%%%%%
In this section we present our results in the case $\gamma=0$ in dimensions $d\geq3$. Similar to the case of the Sobolev inequality, the problem has an additional invariance under dilations which has to be taken care of. We start by studying a variational problem called $\ell^{(N)}_{0,d}$ before explaining the link with the finite rank Cwikel-Lieb-Rozenblum (CLR) constant $L^{(N)}_{0,d}$.

\subsection{A variational principle for the eigenvalues of the Birman-Schwinger operator}
In this subsection we define a variational problem $\ell^{(N)}_{0,d}$, which will be useful for our study of the CLR inequality. This is related to the Yamabe problem and was studied first in this context in~\cite{AmmHum-06}. We discuss this link in Subsection~\ref{sec:Yamabe} and the implications for the CLR constant $L^{(N)}_{0,d}$ later in Subsection~\ref{sec:CLR_finite_rank}.

Let $0\leq V\in L^{d/2}(\R^d)$ and consider the Birman-Schwinger operator
$$K_V:=\frac{1}{\sqrt{-\Delta}}V\frac{1}{\sqrt{-\Delta}},$$
which is self-adjoint, non-negative and compact. In fact, $K_V=B^*B$ where $B=V^{1/2}(-\Delta)^{-1/2}$ is itself bounded by the Hardy-Littlewood-Sobolev inequality and seen to be compact by an approximation argument, see~\cite{LieSei-09,Simon-05} and~\cite[Prop.~4.18]{FraLapWei-LT}. We call $\mu_j(V)$ the max-min levels of $K_V$, which either vanish or equal the positive eigenvalues arranged in decreasing order and repeated according to their multiplicities. Note that these are the same as those of the operator
$$K'_V:=\sqrt{V}\frac{1}{-\Delta}\sqrt V$$
which is most commonly used in the literature. In terms of the above operator $B$ we have $K_V'=BB^*$.
We then define
\begin{equation}
 \boxed{\ell^{(N)}_{0,d}:=N\,\sup_{\substack{V\in L^{d/2}(\R^d)\\ V\neq0,\ V\geq0}}\frac{\mu_N(V)^{\frac{d}2}}{\int_{\R^d}V(x)^{\frac{d}2}\,\rd x}.}
 \label{eq:def_ell}
\end{equation}
Since $K_{t^2V(t\,\cdot-X)}$ is unitarily equivalent to $K_V$ for all $t>0$ and all $X\in\R^d$, the variational problem in~\eqref{eq:def_ell} is invariant under both translations and dilations. It is in fact also invariant under conformal transformations, see~\cite[Chap.~4]{LieLos-01} and~\cite{Frank-23}.
Using $K_{\lambda V}=\lambda K_V$ hence $\mu_N(\lambda V)=\lambda \mu_N(V)$, we obtain the equivalent formulas
\begin{align}
\ell^{(N)}_{0,d}=N\bigg(\inf_{\mu_N(V)\geq1}\int_{\R^d}V(x)^{\frac{d}2}\,\rd x)\bigg)^{-1}&=N\bigg(\inf_{\mu_N(V)=1}\int_{\R^d}V(x)^{\frac{d}2}\,\rd x)\bigg)^{-1}&\nn\\
&=N\,\sup_{\int_{\R^d}V^{\frac{d}2}=1}\mu_N(V)^{\frac{d}2}.\label{eq:def_ell_equivalent}
\end{align}
In the first infimum we are asking for the smallest possible $L^{d/2}(\R^d)$ norm of a potential $V\geq0$ so that $K_V$ has $N$ eigenvalues $\geq1$ or more. If $\mu_N(V)>1$, then we may replace $V$ by $(1-\eps)V$ which decreases the norm. This is how we obtain the second equality with an infimum over potentials $V$ satisfying the condition $\mu_N(V)=1$.

We now quickly mention the link with the CLR inequality. This will be discussed in more detail later in Subsection~\ref{sec:CLR_finite_rank}.  The Birman-Schwinger principle~\cite{ReeSim4,LieSei-09} states that the number of eigenvalues of $K_V$ that are $>1$ is exactly equal to the number of negative eigenvalues of $-\Delta-V$ (all counted with multiplicities). From~\eqref{eq:def_ell_equivalent} we obtain that
\begin{equation}
N\leq \ell^{(N)}_{0,d} \int_{\R^d}V(x)^{\frac{d}2}\,\rd x\qquad
\text{for all $0\leq V\in L^{\frac{d}2}(\R^d)$ with $\lambda_N(-\Delta-V)<0$.}
\label{eq:CLR_N}
\end{equation}
This gives an estimate on $N$ in terms of the size of $V$, under the assumption that $-\Delta-V$ has $N$ negative eigenvalues. The best constant in~\eqref{eq:CLR_N} is $\ell^{(N)}_{0,d}$ since we may as well replace the constraint $\mu_N(V)\geq1$ by $\mu_N(V)>1$ in~\eqref{eq:def_ell}. The finite-rank CLR inequality reads
\begin{equation}
\min\big(N,\#\{j\ :\ \lambda_j(-\Delta-V)<0\}\big)\leq L^{(N)}_{0,d} \int_{\R^d}V(x)_+^{\frac{d}2}\,\rd x
\label{eq:CLR_leqN}
\end{equation}
and provides an estimate on the number of eigenvalues truncated to $N$, in terms of the size of $V$. By looking at all the possibilities for the number of negative eigenvalues on the left, we deduce that its best constant is given by
$$\boxed{L^{(N)}_{0,d}=\max_{1\leq n\leq N}\ell^{(n)}_{0,d}.}$$
Hence studying $\ell^{(N)}_{0,d}$ will give us some information on $L^{(N)}_{0,d}$. Note that the best constant in the original CLR inequality is recovered in the limit $N\to\ii$:
$$L_{0,d}=\sup_{N\geq1} L^{(N)}_{0,d}=\lim_{N\to\ii}L^{(N)}_{0,d}=\sup_{N\geq1}\ell^{(N)}_{0,d}.$$

The variational problem $\ell^{(N)}_{0,d}$ contains more information than $L^{(N)}_{0,d}$, which only sees the largest of the $\ell^{(n)}_{0,d}$ in the window $1\leq n\leq N$. From the point of view of compactness, $\ell^{(N)}_{0,d}$ is in fact much more natural than $L^{(N)}_{0,d}$. We focus on $\ell^{(N)}_{0,d}$ in the next two subsections. In Subsection~\ref{sec:Yamabe} we state some results using the link with the Yamabe problem and in Subsection~\ref{sec:CLR_finite_rank} we finally go back to $L^{(N)}_{0,d}$.

\subsection{Bubble decomposition}
The following provides some useful elementary properties of $\ell^{(N)}_{0,d}$.

\begin{lemma}[Properties of $\ell^{(N)}_{0,d}$]\label{lem:pties_ell}
We have
\begin{equation}
\frac{N}{\ell^{(N)}_{0,d}}\leq \frac{N+1}{\ell^{(N+1)}_{0,d}}\qquad\forall N\in\N,
 \label{eq:ell_quotient_monotone}
\end{equation}
as well as
\begin{equation}
\frac{N}{\ell^{(N)}_{0,d}}\leq \frac{K}{\ell^{(K)}_{0,d}}+\frac{N-K}{\ell^{(N-K)}_{0,d}},\qquad \forall N\in\N,\ \forall K=1,...,N-1.
 \label{eq:subadditivity_ell}
\end{equation}
In particular, $\ell^{(N)}_{0,d}\geq \ell^{(1)}_{0,d}$ and
\begin{equation}
\lim_{N\to\ii} \ell^{(N)}_{0,d}=\sup_{N\geq1} \ell^{(N)}_{0,d}=L_{0,d}.
 \label{eq:limit_ell_N}
\end{equation}
\end{lemma}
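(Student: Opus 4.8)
My plan is to work throughout with the equivalent quantity
$I_N:=N/\ell^{(N)}_{0,d}=\inf\{\int_{\R^d}V^{d/2}\,\rd x:\ 0\le V\in L^{d/2}(\R^d),\ \mu_N(V)\ge1\}$ provided by \eqref{eq:def_ell_equivalent}, so that \eqref{eq:ell_quotient_monotone} reads $I_N\le I_{N+1}$ and \eqref{eq:subadditivity_ell} reads $I_N\le I_K+I_{N-K}$. The monotonicity is immediate: since the $\mu_j(V)$ are non-increasing in $j$, the constraint $\mu_{N+1}(V)\ge1$ forces $\mu_N(V)\ge1$, so the admissible set for $I_{N+1}$ sits inside that for $I_N$. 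The subadditivity is the real content, and the idea is the standard one of placing two near-optimizers far apart. Before doing so I would record two routine preliminaries: (a) the map $V\mapsto\mu_N(V)$ is continuous for the $L^{d/2}$ norm, because the Hardy--Littlewood--Sobolev inequality gives $\|K_{V}-K_{W}\|\le\|K_{|V-W|}\|\le C\|V-W\|_{L^{d/2}}$ and max-min levels are $1$-Lipschitz in the operator; and (b) $\mu_j(\lambda V)=\lambda\,\mu_j(V)$.

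For \eqref{eq:subadditivity_ell}, fix $1\le K\le N-1$ and $\eps>0$, and pick $V_1,V_2\ge0$ with $\mu_K(V_1)\ge1$, $\mu_{N-K}(V_2)\ge1$, $\int V_1^{d/2}\le I_K+\eps$, $\int V_2^{d/2}\le I_{N-K}+\eps$. Using (b) to create strict inequalities and then (a) to truncate, I may replace these by potentials $W_1,W_2\ge0$ supported in a ball $B_R$, with $\mu_K(W_1)>1$, $\mu_{N-K}(W_2)>1$, $\int W_1^{d/2}\le(1+\eps)^{d/2}(I_K+\eps)$ and $\int W_2^{d/2}\le(1+\eps)^{d/2}(I_{N-K}+\eps)$. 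Put $W_\tau:=W_1+W_2(\cdot-\tau)$ for $|\tau|>2R$, so the supports are disjoint, $\int W_\tau^{d/2}=\int W_1^{d/2}+\int W_2^{d/2}$, and $\sqrt{W_\tau}=\sqrt{W_1}+\sqrt{W_2(\cdot-\tau)}$. Working with $K'_V:=\sqrt V(-\Delta)^{-1}\sqrt V$, which has the same max-min levels $\mu_N$ as $K_V$, this yields
\[
K'_{W_\tau}=D_\tau+R_\tau,\qquad D_\tau:=K'_{W_1}\oplus K'_{W_2(\cdot-\tau)},\qquad R_\tau:=\sqrt{W_1}\,(-\Delta)^{-1}\sqrt{W_2(\cdot-\tau)}+(\mathrm{adjoint}),
\]
where $D_\tau$ is block-diagonal on $L^2(\supp W_1)\oplus L^2(\supp W_2(\cdot-\tau))$ and vanishes on the orthogonal complement. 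The positive spectrum of $D_\tau$ is the union, with multiplicity, of $\{\mu_i(W_1)\}$ and $\{\mu_j(W_2)\}$, and the first block contributes $K$ values $>1$ while the second contributes $N-K$ such values, so $\mu_N(D_\tau)\ge\min(\mu_K(W_1),\mu_{N-K}(W_2))>1$. The integral kernel of $R_\tau$ is supported on $\{|x-y|\ge|\tau|-2R\}$; since the Green's function of $-\Delta$ on $\R^d$ ($d\ge3$) is $c_d|x-y|^{2-d}$ and $W_1,W_2\in L^{d/2}$ with bounded support, hence in $L^1$, the Hilbert--Schmidt norm of $R_\tau$ is $O\big((|\tau|-2R)^{2-d}\big)\to0$ as $|\tau|\to\ii$. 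By the bound $|\mu_N(A)-\mu_N(B)|\le\|A-B\|$ for non-negative compact operators, $\mu_N(W_\tau)\ge\mu_N(D_\tau)-\|R_\tau\|\ge1$ once $|\tau|$ is large. Hence $W_\tau$ is admissible for $I_N$, so $I_N\le\int W_\tau^{d/2}\le(1+\eps)^{d/2}(I_K+I_{N-K}+2\eps)$, and letting $\eps\to0$ gives \eqref{eq:subadditivity_ell}.

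It remains to deduce \eqref{eq:limit_ell_N}. Iterating \eqref{eq:subadditivity_ell} yields $I_N\le N I_1$, i.e.\ $\ell^{(N)}_{0,d}=N/I_N\ge1/I_1=\ell^{(1)}_{0,d}$. (Here $I_1>0$: $\mu_1(V)\ge1$ means $\|K_V\|\ge1$, hence $\int V^{d/2}\ge C^{-d/2}>0$ by Hardy--Littlewood--Sobolev, so no division by zero occurs.) Moreover \eqref{eq:subadditivity_ell} says exactly that $(I_N)_{N\ge1}$ is subadditive, so Fekete's lemma gives $I_N/N\to\inf_{M\ge1}I_M/M$ as $N\to\ii$; passing to reciprocals, $\ell^{(N)}_{0,d}=N/I_N\to\big(\inf_M I_M/M\big)^{-1}=\sup_{M\ge1}\ell^{(M)}_{0,d}$, and this supremum equals $L_{0,d}$ by the identity $L_{0,d}=\sup_{M\ge1}\ell^{(M)}_{0,d}$ recalled just before the statement of the lemma.

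The step I expect to be the main obstacle is \eqref{eq:subadditivity_ell}, and within it the comparison of $K'_{W_\tau}$ with the direct sum $D_\tau$: because $(-\Delta)^{-1}$ is nonlocal, the Birman--Schwinger operator of $W_1+W_2(\cdot-\tau)$ is genuinely not the direct sum of those of $W_1$ and $W_2(\cdot-\tau)$ even for disjoint supports, unlike what happens for a local Schr\"odinger operator. What rescues the argument is the polynomial decay $|x-y|^{2-d}$ of the Green's function, which makes the interaction term $R_\tau$ small in Hilbert--Schmidt norm; reducing to compactly supported potentials and keeping track of $\mu_N$ along the way are the routine auxiliary points, all handled by the $L^{d/2}$-continuity of $V\mapsto K_V$.
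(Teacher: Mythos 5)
Your proof is correct and follows the same route as the paper: monotonicity from $\mu_{N+1}(V)\leq\mu_N(V)$, subadditivity by placing near-optimizers far apart, and the limit via Fekete's subadditive lemma. The paper gives no details for the subadditivity step beyond the phrase ``placing two quasi-optimizers far away''; your implementation --- reducing to compactly supported potentials via the $L^{d/2}$-continuity of $\mu_N$, splitting $K'_{W_\tau}$ into a block-diagonal part $D_\tau$ (whose $N$th max-min level exceeds $1$) plus a cross term $R_\tau$, and bounding $R_\tau$ in Hilbert--Schmidt norm by $O\big((|\tau|-2R)^{2-d}\big)$ using the decay of the Green's function for $d\geq 3$ --- correctly fills in exactly what the paper leaves implicit.
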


\begin{proof}
The first inequality~\eqref{eq:ell_quotient_monotone} follows from $\mu_{N+1}(V)\leq \mu_N(V)$. The second~\eqref{eq:subadditivity_ell} is obtained by placing two quasi-optimizers far away with, respectively, $K$ and $N-K$ eigenvalues. This means that $N\mapsto N/\ell^{(N)}_{0,d}$ is subadditive and, by Fekete's subadditive lemma~\cite{Fekete-23}, we deduce that $(\ell^{(N)}_{0,d})^{-1}$ converges to its infimum in the limit $N\to\ii$, which is~\eqref{eq:limit_ell_N}.
\end{proof}

The inequality~\eqref{eq:subadditivity_ell} implies that $\ell^{(N)}_{0,d}$ is non-decreasing along certain sequences, such as $k\mapsto 2^kN_0$. We do not know whether it is non-decreasing over the whole set of integers.

In Section~\ref{sec:proof_bubble_critical}, we prove the following bubble decomposition which is the equivalent of Theorem~\ref{thm:bubble} in the critical case $\gamma=0$.

\begin{theorem}[Bubble decomposition -- critical case]\label{thm:bubble_critical}
Let $d\geq3$ and $N\in\N$. Let $0\leq V_n\in L^{d/2}(\R^d)$ be an optimizing sequence for $\ell^{(N)}_{0,d}$, chosen so that $\mu_N(V_n)=1$ for all $n$. Then there exists

\smallskip

\begin{itemize}[leftmargin=* ,parsep=0cm,itemsep=0cm,topsep=0cm]
 \item integers $N_1,...,N_K\geq1$ so that
\begin{equation}
\sum_{k=1}^KN_k=N\quad\text{and}\quad \frac{1}{\ell^{(N)}_{0,d}}=\sum_{k=1}^K\frac{N_k}{N}\frac{1}{\ell^{(N_k)}_{0,d}};
\label{eq:decomp_ell_N_inverse}
\end{equation}

\smallskip

\item non-negative functions $0\neq V^{(k)}\in L^{d/2}(\R^d,\R_+)$ for $k=1,...,K$, which satisfy $\mu_{N_k}(V^{(k)})=1$ and are optimizers for $\ell^{(N_k)}_{0,d}$;

\smallskip

\item sequences $x_n^{(k)}\in\R^d$ and scaling parameters $t_n^{(k)}$ with
\begin{equation}
 \liminf_{n\to\ii}\left(t_n^{(k)}t_n^{(k')}|x_n^{(k)}-x_n^{(k')}|^2+\frac{t_n^{(k)}}{t_n^{(k')}}+\frac{t_n^{(k')}}{t_n^{(k)}}\right)=+\ii
 \label{eq:orthogonal_scalings}
\end{equation}
\end{itemize}

\smallskip

\noindent such that, after extracting a (not displayed) subsequence, we have
\begin{equation}
\boxed{\lim_{n\to\ii}\norm{V_n-\sum_{k=1}^K (t_n^{(k)})^2V^{(k)}\Big(t_n^{(k)}(\cdot -x_n^{(k)})\Big)}_{L^{d/2}(\R^d)}=0.}
\label{eq:bubble_v_critical}
\end{equation}
\end{theorem}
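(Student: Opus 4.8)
The plan is to mimic the concentration–compactness / bubbling argument of Theorem \ref{thm:bubble}, but now working with the Birman–Schwinger operator $K_{V_n}$ and its $N$-th eigenvalue $\mu_N(V_n)=1$, and keeping track of the extra scaling degree of freedom. First I would pass to the eigenfunctions: since $\mu_N(V_n)=1$, the operator $B_n = V_n^{1/2}(-\Delta)^{-1/2}$ satisfies that $B_n^*B_n = K_{V_n}$ has $N$ eigenvalues $\geq \mu_N(V_n)=1$; let $\psi_1^{(n)},\dots,\psi_N^{(n)}$ be an orthonormal family of eigenfunctions of $K_{V_n}$ for the top $N$ eigenvalues, and set $u_j^{(n)} := (-\Delta)^{-1/2}\psi_j^{(n)} \in \dot H^1(\R^d)$, which are the corresponding Birman–Schwinger states solving $(-\Delta) u_j^{(n)} = \mu_j^{(n)-1} V_n u_j^{(n)}$ in the weak sense. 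The normalization $\|\psi_j^{(n)}\|_{L^2}=1$ gives $\|\nabla u_j^{(n)}\|_{L^2}=1$, so the $u_j^{(n)}$ are bounded in $\dot H^1(\R^d)$. The constraint $\int V_n^{d/2} = 1$ (using the equivalent formulation \eqref{eq:def_ell_equivalent}) bounds $V_n$ in $L^{d/2}$. I would then invoke the profile-decomposition machinery for bounded sequences in $\dot H^1(\R^d)$ (Gérard, Solimini), which is exactly the tool accounting for the combined translation–dilation non-compactness: up to a subsequence, each $u_j^{(n)}$ decomposes into a sum of rescaled–translated profiles $(t_n^{(k)})^{(d-2)/2} w_j^{(k)}(t_n^{(k)}(x - x_n^{(k)}))$ with the orthogonality \eqref{eq:orthogonal_scalings} of the scales, plus an error small in the critical Sobolev norm $L^{2^*}$.

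Next I would transfer this to the potential side. For each concentration core $k$, rescale $V_n$ by $(t_n^{(k)})^{-2} V_n((t_n^{(k)})^{-1}\cdot + x_n^{(k)})$; by compactness of $K_V$ on fixed-support pieces and the local compactness of the spectrum (the analogue in this setting of the ``locally compact'' step invoked after Theorem \ref{thm:bubble}), these converge weakly, and then strongly on the concentrated part, to a limit $V^{(k)}\geq 0$, with $K_{V^{(k)}}$ inheriting some number $N_k\geq 1$ of eigenvalues $\geq 1$ from the corresponding block of profiles. The key book-keeping identities are: the masses add up, $\sum_k \int (V^{(k)})^{d/2} = \liminf \int V_n^{d/2}$ (this is where one uses that no mass is lost into the error — the Brezis–Lieb lemma in $L^{d/2}$), and the eigenvalue counts add up, $\sum_k N_k = N$, which follows because the spectrum of $K_{V_n}$ above $1$ is asymptotically the disjoint union of the spectra of the $K_{V^{(k)}}$ (asymptotic orthogonality of the bubbles: cross terms $\langle B_n^{(k)} \cdot, B_n^{(k')} \cdot\rangle$ vanish by \eqref{eq:orthogonal_scalings}). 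Feeding these back into the Rayleigh–quotient definition \eqref{eq:def_ell_equivalent} and using the subadditivity bound \eqref{eq:subadditivity_ell} from Lemma \ref{lem:pties_ell} forces each $V^{(k)}$ to be an optimizer for $\ell^{(N_k)}_{0,d}$ and forces the sharp decomposition \eqref{eq:decomp_ell_N_inverse} of $1/\ell^{(N)}_{0,d}$; after rescaling each $V^{(k)}$ so that $\mu_{N_k}(V^{(k)})=1$ (absorbing the rescaling into $t_n^{(k)}$), the remainder in \eqref{eq:bubble_v_critical} goes to zero in $L^{d/2}$.

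The main obstacle, as in the subcritical case, is the nonlinearity and nonlocality of the spectral functional $V\mapsto \mu_N(V)$: unlike a variational problem with an explicitly given energy, one cannot simply split the functional along a partition of unity. The two facts that need genuine work are (i) \emph{asymptotic locality}: two pieces of potential supported at orthogonal scales/locations generate, to leading order, independent Birman–Schwinger eigenvalues — this requires controlling the off-diagonal operator $\mathbf{1}_{A}(-\Delta)^{-1/2} V (-\Delta)^{-1/2}\mathbf{1}_{B}$ when $A,B$ are ``far apart'' in the conformal geometry, a more delicate estimate than in the $H^1$ setting because $(-\Delta)^{-1/2}$ is nonlocal with a slowly decaying kernel; and (ii) \emph{local compactness}: if $V_n\rightharpoonup V$ weakly in $L^{d/2}$ with uniformly small tails, then $\mu_j(V_n)\to\mu_j(V)$, which one gets from norm-convergence of $B_n$ on a suitable dense set together with a tightness argument to exclude dilation-driven loss. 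I would handle (i) by a cutoff-and-conformal-change argument (pulling back to the sphere $\mathbb{S}^d$ as in the Yamabe picture of \cite{AmmHum-06}, where the scaling non-compactness becomes concentration at points), and (ii) by the compactness of $K_V$ for fixed compactly supported bounded $V$ plus an approximation argument as in \cite[Prop.~4.18]{FraLapWei-LT}. The iteration then terminates because each extracted bubble carries a definite amount of $L^{d/2}$-mass bounded below (since $N_k\geq 1$ and $\ell^{(1)}_{0,d}<\infty$ forces $\int (V^{(k)})^{d/2}\geq 1/\ell^{(1)}_{0,d}$), so only finitely many bubbles $K$ can appear.
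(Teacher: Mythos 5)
Your overall strategy is the same as the paper's: pass to the Birman--Schwinger eigenfunctions $f_{m,n}=(-\Delta)^{-1/2}\psi_m^{(n)}$, which are orthonormal in $\dot H^1(\R^d)$; apply the G\'erard profile decomposition to this vector-valued bounded $\dot H^1$ sequence; transfer to the potential side by rescaling $V_n$ at each concentration core; and close the argument with the mass-additivity inequality, the eigenvalue count, and the subadditivity~\eqref{eq:subadditivity_ell}. But there is a genuine gap in your treatment of the eigenvalue count, and a difference of tactic elsewhere.

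The gap: you track only the top $N$ eigenfunctions of $K_{V_n}$ and assert ``$\sum_k N_k = N$'' as if it were automatic. It is not. The limiting value $\mu_N = \lim_n \mu_N(V_n) = 1$ may have multiplicity strictly greater than one — the eigenvalues $\mu_{N+1}(V_n), \mu_{N+2}(V_n), \dots$ can creep up to $1$ along the sequence, and once a bubble $V^{(k)}$ is extracted it can carry \emph{more} zero-energy modes than the number of profiles your $N$ tracked eigenfunctions deposit on it. So if $N_k$ is (correctly) defined as the largest integer with $\mu_{N_k}(V^{(k)}) \geq 1$, you can only conclude $\sum_k N_k \geq N$ from your setup, not equality. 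The paper resolves this by working a priori with $M \geq N$ eigenfunctions, where $M$ is the largest index with $\lim_n \mu_M(V_n) = 1$ (controlled via $\mu_j(V_n) \leq C j^{-2/d}$), proving the two-sided bound $\sum_k N_k = M$ through an explicit Gram-matrix computation on the span of the rescaled eigenfunctions, and \emph{then} showing $M = N$ a posteriori by invoking Theorem~\ref{thm:equation_critical}: each bubble $V^{(k)}$ being an optimizer for $\ell^{(N_k)}_{0,d}$, the no-unfilled-shell property $\mu_{N_k+1}(V^{(k)}) < 1$ forbids the extra modes that would make $M > N$. This last step is logically essential for~\eqref{eq:decomp_ell_N_inverse} and your proposal does not supply it or a substitute.

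A secondary difference of route: for what you call ``asymptotic locality'' you propose a cutoff-and-conformal-change argument, pulling everything back to $\bS^d$. The paper avoids the sphere entirely and instead uses a direct scaling-orthogonality estimate~\eqref{eq:orthogonal_scaling}: the off-diagonal matrix elements $\int_{\R^d} V_n\,\overline{g_{m,n}^{(j)}} g_{m',n}^{(j')}$ for $j\neq j'$ vanish because $V_n$ is bounded in $L^{d/2}$ and the rescaled profiles dephase in the H\"older-dual $L^{2d/(d-2)}$ pairing. This is shorter and stays in $\R^d$; your sphere route would also work (it is the Ammann--Humbert viewpoint~\cite{AmmHum-06}) but is heavier and, importantly, still would not fix the $\sum N_k = N$ issue above.
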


Note that~\eqref{eq:decomp_ell_N_inverse} is somewhat different from what we found for $\gamma>0$ in Theorem~\ref{thm:bubble}. Remember, however, that $\ell^{(N)}_{0,d}$ is in general not equal to $L^{(N)}_{0,d}$. The bubble decomposition is properly stated in terms of  $\ell^{(N)}_{0,d}$ and not $L^{(N)}_{0,d}$.

The way to prove the theorem is to show that the spectrum of $K_{V_n}$ above $1$ is the approximate union of the spectra of the $K_{V^{(k)}}$. Each $V^{(k)}$ is an optimizer for $\ell^{(N_k)}_{0,d}$ and thus $K_{V^{(k)}}$ must have the eigenvalue 1.

The following is an immediate consequence of Theorem~\ref{thm:bubble_critical}.

\begin{corollary}[A compactness criterion for $\ell^{(N)}_{0,d}$]\label{cor:existence_binding_critical}
Let $N\geq2$ in dimension $d\geq3$. If
\begin{equation}
\frac{N}{\ell^{(N)}_{0,d}}< \frac{K}{\ell^{(K)}_{0,d}}+\frac{N-K}{\ell^{(N-K)}_{0,d}},\qquad \forall K=1,...,N-1,
 \label{eq:binding_critical}
\end{equation}
then all the optimizing sequences for $\ell^{(N)}_{0,d}$ are compact, up to translations and dilations, and converge in $L^{d/2}(\R^d)$ to an optimizer, after extraction of a subsequence. The condition~\eqref{eq:binding_critical} holds for instance when
$$\ell^{(N)}_{0,d}>\max_{m=1,...,N-1}\ell^{(m)}_{0,d}.$$
\end{corollary}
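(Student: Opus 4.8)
The plan is to read the statement off directly from the bubble decomposition of Theorem~\ref{thm:bubble_critical}, the only extra input being the (non-strict) subadditivity~\eqref{eq:subadditivity_ell} of the map $m\mapsto m/\ell^{(m)}_{0,d}$ recorded in Lemma~\ref{lem:pties_ell}. First I would take an arbitrary optimizing sequence $(V_n)$ for $\ell^{(N)}_{0,d}$ and, using the dilation invariance noted after~\eqref{eq:def_ell}, normalize it so that $\mu_N(V_n)=1$. Applying Theorem~\ref{thm:bubble_critical} then gives, after extraction of a subsequence, integers $N_1,\dots,N_K\geq 1$ with $\sum_k N_k=N$ and $\tfrac{1}{\ell^{(N)}_{0,d}}=\sum_{k=1}^K\tfrac{N_k}{N}\tfrac{1}{\ell^{(N_k)}_{0,d}}$, bubbles $V^{(k)}$ which are optimizers for $\ell^{(N_k)}_{0,d}$ with $\mu_{N_k}(V^{(k)})=1$, and translation/scaling parameters satisfying the orthogonality~\eqref{eq:orthogonal_scalings}, together with the $L^{d/2}$-convergence~\eqref{eq:bubble_v_critical}.

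The heart of the argument is to show that hypothesis~\eqref{eq:binding_critical} forces $K=1$. Suppose $K\geq 2$ and put $K_0:=N_1$; since $N_1=N-(N_2+\cdots+N_K)\leq N-(K-1)\leq N-1$ and $N_1\geq 1$, we have $1\leq K_0\leq N-1$. Iterating~\eqref{eq:subadditivity_ell} over the bubbles $k=2,\dots,K$ yields $\sum_{k=2}^K N_k/\ell^{(N_k)}_{0,d}\geq (N-N_1)/\ell^{(N-N_1)}_{0,d}$, whence
\[
\frac{N}{\ell^{(N)}_{0,d}}=\frac{N_1}{\ell^{(N_1)}_{0,d}}+\sum_{k=2}^K\frac{N_k}{\ell^{(N_k)}_{0,d}}\geq \frac{K_0}{\ell^{(K_0)}_{0,d}}+\frac{N-K_0}{\ell^{(N-K_0)}_{0,d}},
\]
contradicting~\eqref{eq:binding_critical} with $K=K_0$. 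Hence $K=1$, $N_1=N$, and~\eqref{eq:bubble_v_critical} reduces to $\bigl\|V_n-(t_n^{(1)})^2V^{(1)}\bigl(t_n^{(1)}(\cdot-x_n^{(1)})\bigr)\bigr\|_{L^{d/2}(\R^d)}\to 0$. Thus $(V_n)$ is precompact up to translations and dilations, and converges, after extraction, to the optimizer $V^{(1)}$ for $\ell^{(N)}_{0,d}$ produced by the theorem, which proves the first assertion.

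For the sufficient condition it suffices to observe that if $\ell^{(N)}_{0,d}>\ell^{(m)}_{0,d}$ for all $m=1,\dots,N-1$, then for each $K\in\{1,\dots,N-1\}$ we have both $K/\ell^{(K)}_{0,d}>K/\ell^{(N)}_{0,d}$ and $(N-K)/\ell^{(N-K)}_{0,d}>(N-K)/\ell^{(N)}_{0,d}$; adding these gives precisely~\eqref{eq:binding_critical}. I do not expect any genuine obstacle here: all the analytic difficulty is already contained in Theorem~\ref{thm:bubble_critical}, and the corollary is a short combinatorial deduction. The only point requiring a little care is the iteration of~\eqref{eq:subadditivity_ell} to control the contribution of the remaining $K-1$ bubbles, which is legitimate precisely because~\eqref{eq:subadditivity_ell} is exactly the subadditivity of $m\mapsto m/\ell^{(m)}_{0,d}$.
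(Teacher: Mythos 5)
Your argument is correct and follows the same route as the paper: apply Theorem~\ref{thm:bubble_critical} to a normalized optimizing sequence, combine the strict binding hypothesis~\eqref{eq:binding_critical} with the non-strict subadditivity~\eqref{eq:subadditivity_ell} to rule out any partition with $K\geq 2$ bubbles, and conclude $K=1$; the sufficient condition is the same elementary observation in both. The only cosmetic difference is that the paper first promotes~\eqref{eq:binding_critical} to the general $K$-bubble strict inequality~\eqref{eq:binding_critical2} and then invokes the decomposition, whereas you reason directly by contradiction inside the decomposition.
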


Together with the non-strict inequality~\eqref{eq:subadditivity_ell}, the strict inequality~\eqref{eq:binding_critical} implies, in particular, that
\begin{equation}
 \frac{1}{\ell^{(N)}_{0,d}}<\sum_{k=1}^K\frac{N_k}{N}\frac{1}{\ell^{(N_k)}_{0,d}}
 \label{eq:binding_critical2}
\end{equation}
for all $\sum_{k=1}^KN_k=N$ with $K\geq2$. By Theorem~\ref{thm:bubble_critical}, this avoids the non-compactness of optimizing sequences. The last part of the statement is because
$$\frac{K}{\ell^{(K)}_{0,d}}+\frac{N-K}{\ell^{(N-K)}_{0,d}}\geq \frac{N}{\dps \max_{m=1,...,N-1}\ell^{(m)}_{0,d}}.$$
We conclude that, as soon as one $\ell^{(N)}_{0,d}$ is higher than all the previous $\ell^{(m)}_{0,d}$, a minimizer must exist. This raises the question of whether $N\mapsto\ell^{(N)}_{0,d}$ could be constantly equal to $\ell^{(1)}_{0,d}$ or not, in which case there would be no other optimizer than the $N=1$ Sobolev potential. We discuss this later in Subsection~\ref{sec:Yamabe}.

\subsection{Euler-Lagrange equation}

Next, we discuss properties of optimizers, assuming they exist. We denote by $\dot{H}^1(\R^d)$ the homogeneous Sobolev space.

\begin{theorem}[Euler-Lagrange equation -- critical case]\label{thm:equation_critical}
Let $d\geq3$. Let $N\geq1$ be so that $\ell^{(N)}_{0,d}$ admits an optimizer $V_*$, normalized in the manner $\mu_N(V_*)=1$.
Then we have
\begin{equation}
 \mu_{N+1}(V_*)<\mu_N(V_*)=1.
 \label{eq:no_unfilled_shell_critical}
\end{equation}
There exists a finite system $f_j$ of orthogonal functions in $\dot{H}^1(\R^d)$ satisfying $(-\Delta -V_*)f_j=0$ such that
\begin{equation}
V_*=\left(\frac{N}{\ell^{(N)}_{0,d}}\sum_j|f_j|^2\right)^{\frac2{d-2}},\qquad \sum_j\int_{\R^d}|\nabla f_j|^2=1.
\label{eq:equation_critical}
\end{equation}
The potential $V_*$ is positive almost everywhere and
\begin{equation}
\begin{cases}
\text{real-analytic on $\R^d$}&\text{if $d\in\{3,4\}$,}\\
\text{$C^{1,\frac13}(\R^d)$ and real-analytic on $\{V_*>0\}$}&\text{if $d=5$,}\\
\text{$C^{0,\frac4{d-2}}(\R^d)$ and real-analytic on $\{V_*>0\}$}&\text{if $d\geq6$.}
  \end{cases}
 \label{eq:regularity_CLR}
\end{equation}
At infinity we have
\begin{equation}
 \lim_{|x|\to\ii}|x|^4V_*(x)=c_*
 \label{eq:decay_CLR}
\end{equation}
for some finite constant $c_*\geq0$. At least one of the optimizers of $\ell^{(N)}_{0,d}$ satisfies $c_*>0$.
\end{theorem}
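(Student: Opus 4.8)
The plan is to run a constrained variational argument directly on the problem $\ell^{(N)}_{0,d}$, using the Birman--Schwinger operator $K_V$, and then bootstrap regularity and decay from the resulting Euler--Lagrange equation. First I would establish the strict spectral gap~\eqref{eq:no_unfilled_shell_critical}: suppose $\mu_{N+1}(V_*)=\mu_N(V_*)=1$, i.e.\ the eigenvalue $1$ of $K_{V_*}$ has multiplicity at least two above the $N$th level. Then a suitable perturbation $V_*+\eps W$ with $W$ supported where one of the ``extra'' eigenfunctions lives but another does not would increase $\mu_N$ at first order while only changing $\int V_*^{d/2}$ by $O(\eps)$ with a sign we can choose; pushing the eigenvalue $\mu_N$ up past $1$ and then rescaling $V\mapsto (1-\delta)V$ lowers $\int V^{d/2}$ strictly, contradicting optimality. (This is the CLR analogue of the ``no unfilled shells'' argument of~\cite{BacLieLosSol-94}, and is parallel to~\eqref{eq:no-unfilled-shell} in the subcritical Theorem~\ref{thm:equation}.) Once the gap is known, the top eigenvalue $\mu_N(V_*)=1$ has a finite-dimensional eigenspace, spanned by eigenfunctions $\psi_j$ of $K_{V_*}$; setting $f_j := (-\Delta)^{-1/2}\psi_j$ (up to normalization) gives orthogonal functions in $\dot H^1(\R^d)$ with $(-\Delta-V_*)f_j = 0$, since $K_{V_*}\psi_j=\psi_j$ translates exactly into this equation.

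Next I would derive the pointwise formula~\eqref{eq:equation_critical}. Differentiating the map $V\mapsto \mu_N(V)$ at $V_*$ in the direction $W$, the Feynman--Hellmann formula (valid because the top eigenvalue is now simple-or-degenerate-but-isolated, and one must take the infimum over the eigenspace to handle the degenerate case) gives a derivative of the form $\sum_j c_j \int W |f_j|^2$ for the worst-case direction, while $\partial_V \big(\int V^{d/2}\big)[W] = \tfrac d2 \int V_*^{d/2-1} W$. Stationarity of the ratio in~\eqref{eq:def_ell} forces $V_*^{d/2-1}$ to be proportional to $\sum_j |f_j|^2$ pointwise, which after fixing the constant via the normalization $\mu_N(V_*)=1$ (equivalently $\sum_j \int |\nabla f_j|^2 = 1$, using $\int V_* |f_j|^2 = \int|\nabla f_j|^2$) yields exactly~\eqref{eq:equation_critical} with the constant $N/\ell^{(N)}_{0,d}$. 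Positivity a.e.\ of $V_*$ then follows because $\sum_j|f_j|^2$ cannot vanish on a set of positive measure: each $f_j$ solves the second-order elliptic equation $-\Delta f_j = V_* f_j$, so by unique continuation its zero set has measure zero, hence so does that of $\sum_j |f_j|^2$.

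For the regularity~\eqref{eq:regularity_CLR} I would bootstrap: the $f_j$ lie in $\dot H^1 \subset L^{2^*}$ with $2^*=2d/(d-2)$, so $|f_j|^2 \in L^{d/(d-2)}$ and $V_* = (\text{const}\sum|f_j|^2)^{2/(d-2)} \in L^{d/2}$ (consistent), and elliptic regularity applied to $-\Delta f_j = V_* f_j$ iteratively upgrades the integrability of $f_j$; the limiting Hölder exponent is dictated by how the power $2/(d-2)$ interacts with Schauder/$C^{0,\alpha}$ estimates, giving real-analyticity when $d\in\{3,4\}$ (where the nonlinearity $t\mapsto t^{2/(d-2)}$ is itself analytic in a neighborhood of any positive value and $V_*$ turns out bounded away from $0$ locally) and the stated finite smoothness $C^{1,1/3}$ for $d=5$, $C^{0,4/(d-2)}$ for $d\ge 6$ in general, with analyticity on the open set $\{V_*>0\}$ coming from analytic elliptic regularity for the system once $V_*$ is known positive and continuous there. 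For the decay~\eqref{eq:decay_CLR}: each $f_j$ is harmonic outside a large ball up to the lower-order term $V_* f_j$, and since $V_*$ decays, $f_j(x) \sim a_j |x|^{2-d}$ as $|x|\to\infty$ (a Kelvin-transform / removable-singularity argument, or comparison with the fundamental solution), whence $\sum_j|f_j|^2 \sim (\sum a_j^2)|x|^{2(2-d)}$ and $V_* \sim c_* |x|^{-4}$. The fact that \emph{some} optimizer has $c_*>0$ I would get by contradiction: if $c_*=0$ for a given optimizer then all $a_j=0$, meaning each $f_j$ decays faster than $|x|^{2-d}$, which forces $f_j\in L^2(\R^d)$ (an actual eigenfunction, not just a resonance); but then a small dilation/translation perturbation, or a direct comparison using a bubble-splitting construction (placing a rescaled copy far away, as in the proof of~\eqref{eq:subadditivity_ell}), would contradict the strict compactness/optimality — more robustly, one invokes Corollary~\ref{cor:existence_binding_critical} together with the bubble decomposition Theorem~\ref{thm:bubble_critical} to argue that at least one optimizer is the ``concentration-compactness limit'' whose $f_j$ genuinely have the slow $|x|^{2-d}$ tail, so $c_*>0$ there.

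The main obstacle will be the Euler--Lagrange derivation in the degenerate case $\dim\ker(K_{V_*}-1)\geq 2$: the map $V\mapsto \mu_N(V)$ is only \emph{one-sided} differentiable when the relevant eigenvalue is degenerate, so the stationarity condition must be extracted carefully, either by a Danskin-type argument over the eigenspace or by first proving~\eqref{eq:no_unfilled_shell_critical} is not enough (it only isolates the $1$-eigenvalue from above, not from below within the top cluster) — one genuinely has to argue that at an optimizer the relevant trace functional $\sum_j\int W|f_j|^2$ must have a sign for \emph{every} $W$ tangent to the constraint, which pins down $V_*^{d/2-1} \propto \sum_j|f_j|^2$ with a \emph{single} proportionality constant across all $j$. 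The decay statement's last clause (existence of an optimizer with $c_*>0$) is the second delicate point, as it requires knowing that the slow-decaying resonance, not a genuine $L^2$ eigenfunction, governs the tail of at least one extremal configuration.
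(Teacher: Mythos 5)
The overall architecture (perturb $\mu_N$, extract a pointwise Euler--Lagrange relation, bootstrap regularity, use a Kelvin-type transform for decay) is the right skeleton, but there are three genuine gaps relative to what the paper actually does.

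First, the order of operations is wrong, and the argument you propose for the gap~\eqref{eq:no_unfilled_shell_critical} does not work as stated. You want to prove $\mu_{N+1}(V_*)<1$ \emph{first}, by finding a direction $W$ that moves $\mu_N$ up at first order. But if $\mu_N(V_*)=1$ sits inside a degenerate cluster, the map $V\mapsto\mu_N(V)$ has only a one-sided (Danskin/Lidskii) derivative, given by a specific eigenvalue of the $m\times m$ matrix $M_{ij}^W=\langle u_i, W u_j\rangle$ restricted to the cluster eigenspace; choosing a $W$ that separates ``extra'' eigenfunctions does not by itself show that the right eigenvalue of this matrix has a favorable sign. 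The paper does the opposite: it derives the Euler--Lagrange relation \emph{without} assuming the gap, by writing the two-sided stationarity condition as the sandwich $\mu_{m-K+1}^\chi\leq(\ell^{(N)}_{0,d}/N)\int V_*^{d/2-1}\chi\leq\mu_K^\chi$ for all $\chi$, then uses Hahn--Banach (a Nadirashvili-type separating-hyperplane argument) and Carath\'eodory's theorem to put $V_*^{(d-2)/2}$ in the convex hull of normalized zero-mode squares. Only after regularity is established does it prove $K=m$, by observing that the matrix $G(x)G(x)^*$ built from the zero modes is rank one at every $x$, which is incompatible with the sandwich inequality unless the cluster is fully filled. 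In other words, the gap is a \emph{corollary} of the equation, not a prerequisite for it.

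Second, your derivation of~\eqref{eq:equation_critical} is too vague precisely where the real work lies. Saying ``stationarity forces $V_*^{d/2-1}$ to be proportional to $\sum_j|f_j|^2$'' skips the step that the one-sided derivatives only yield the inequality constraint above; passing from that to an equality of the form $V_*^{d/2-1}=c\sum_j|f_j|^2$ for a \emph{finite, fixed} system $\{f_j\}$ with a single constant requires the separating-hyperplane argument plus Carath\'eodory (to extract finitely many zero modes) plus a Gram-matrix diagonalization to replace them by orthogonal ones in $\dot H^1$. None of this is automatic from Feynman--Hellmann.

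Third, the $c_*>0$ claim. The paper's argument here is short and concrete: the inversion $W_*(x):=|x|^{-4}V_*(x/|x|^2)$ is again an optimizer with the same $L^{d/2}$ norm and Birman--Schwinger spectrum (this is exactly Lemma~\ref{lem:transfo_sphere1}), and by continuity $\lim_{|x|\to\infty}|x|^4W_*(x)=V_*(0)$. Since $V_*>0$ a.e., one may translate so that $V_*(0)>0$, and then $W_*$ is an optimizer with $c_*=V_*(0)>0$. Your suggested contradiction via bubble splitting and ``the concentration-compactness limit having slow tails'' is circular — it presupposes the very decay structure of optimizers you are trying to establish — and it does not clearly produce a contradiction: having $f_j\in L^2$ instead of being a resonance does not by itself violate optimality. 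The conformal-duality observation that the decay at infinity of $W_*$ equals the value of $V_*$ at the origin is the missing idea, and it is what makes the statement essentially trivial once the transform is in place.

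The regularity discussion is in the right spirit (iterated elliptic bootstrap starting from the zero-mode equation), though note the paper first needs a Moser-type linear lemma (Lemma~\ref{lem:1st_iteration_regularity}) to get the $f_j$ into high enough $L^p$ before $V_*\in L^\infty$ becomes available; the stated H\"older exponents in~\eqref{eq:regularity_CLR} then come out of Schauder theory applied to the non-integer power $t\mapsto t^{2/(d-2)}$, exactly as you anticipate.
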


The strict inequality~\eqref{eq:no_unfilled_shell_critical} is again a ``no unfilled shell'' result in the spirit of what has been proved in Hartree-Fock theory in~\cite{BacLieLosSol-94}.

The equation~\eqref{eq:equation_critical} can be rewritten in terms of the $f_j$'s in the form
\begin{equation}
 \left(-\Delta-\left(\frac{N}{\ell^{(N)}_{0,d}}\sum_j|f_j|^2\right)^{\frac2{d-2}}\right)f_k=0.
 \label{eq:NLS_critical}
\end{equation}
This is a generalization of the Emden-Fowler equation for systems of orthogonal functions in $\dot{H}^1(\R^d)$.

\subsection{$\ell^{(N)}_{0,d}$ and the Yamabe problem}\label{sec:Yamabe}
In Theorem~\ref{thm:existence} we have proved that
$$L^{(2N)}_{\gamma,d}>L^{(N)}_{\gamma,d}\qquad\text{when $\gamma>\max(0,2-d/2)$}.$$
Does this property persist when $\gamma=0$ for $\ell^{(N)}_{0,d}$, at least in dimension $d\geq5$? A conjecture of Glaser, Grosse and Martin in~\cite{GlaGroMar-78} (recently reformulated in~\cite{Frank-23}) states that this is not the case. The authors of~\cite{GlaGroMar-78} conjectured that $N\in\N\mapsto \ell^{(N)}_{0,d}$ attains its maximum for some finite $N_c$ in all dimensions, with $N_c=1$ in dimensions $d\in\{3,4,5,6\}$.
In this spirit, we can state the following theorem which is a reformulation of results from~\cite{GlaGroMar-78,AmmHum-06} in our context.

\begin{theorem}[(Non-)monotonicity of $\ell^{(N)}_{0,d}$]\label{thm:monotony_critical}\
\begin{itemize}[leftmargin=* ,parsep=0cm,itemsep=0cm,topsep=0cm]
 \item \emph{(Case $N=2$~\cite{AmmHum-06})} We have
\begin{equation}
\ell^{(2)}_{0,d}=\ell^{(1)}_{0,d}\qquad\text{for all $d\geq3$}
\label{eq:N=2_critical}
\end{equation}
and $\ell^{(2)}_{0,d}$ admits no optimizer. All the maximizing sequences $(V_n)$ for $\ell^{(2)}_{0,d}$, normalized as $\mu_2(V_n)=1$,  behave up to a subsequence as
$$V_n(x)=\frac{d(d-2)(t_n^{(1)})^2}{\left(1+(t_n^{(1)})^2|x -x_n^{(1)}|^2\right)^2}+\frac{d(d-2)(t_n^{(2)})^2}{\left(1+(t_n^{(2)})^2|x -x_n^{(2)}|^2\right)^2}+o(1)_{L^{\frac{d}2}(\R^d)}$$
with $(t_n^{(k)},x_n^{(k)})$ as in~\eqref{eq:orthogonal_scalings}.

\smallskip

\item \emph{(Case $N=d+2$~\cite{GlaGroMar-78,AmmHum-06})} We have
\begin{equation}
 \ell^{(d+2)}_{0,d}>\ell^{(1)}_{0,d}\qquad\text{in dimensions $d\geq7$.}
 \label{eq:AH-GGM}
\end{equation}
In particular, $\ell^{(N)}_{0,d}$ admits an optimizer for at least one $3\leq N\leq d+2$.
 \end{itemize}
 \end{theorem}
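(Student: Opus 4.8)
Both statements are the translation into our variational language of results of Ammann--Humbert~\cite{AmmHum-06} (the case $N=2$) and Glaser--Grosse--Martin~\cite{GlaGroMar-78} (the case $N=d+2$), so the plan is to fix the dictionary and then transport their arguments. I would work throughout with the translation- and dilation-invariant quantity
\[
\nu^{(N)}:=\frac{N}{\ell^{(N)}_{0,d}}=\inf\Big\{\int_{\R^d}V^{d/2}\,\rd x\ :\ 0\le V\in L^{d/2}(\R^d),\ \mu_N(V)\ge1\Big\},
\]
and, for the link with~\cite{AmmHum-06,GlaGroMar-78}, with the conformal change to the round sphere $\bS^d$: with $U(x)=(1+|x|^2)^{-(d-2)/2}$ and $V=U^{4/(d-2)}W$, the substitution $\phi=U\psi$ turns $-\Delta\phi=\lambda V\phi$ on $\R^d$ into $L_{g_0}\psi=\lambda W\psi$ on $\bS^d$ for the conformal Laplacian $L_{g_0}=-\Delta_{g_0}+\tfrac{d(d-2)}4$, while $\int_{\R^d}V^{d/2}=\int_{\bS^d}W^{d/2}\,\rd v_{g_0}$; thus $\nu^{(N)}$ is, after the normalization $\nu^{(1)}=S_d^{d/2}$ (with $S_d$ the sharp Sobolev constant, $\int|\nabla u|^2\ge S_d\,\|u\|_{2^*}^2$, $2^*=2d/(d-2)$), essentially the $N$-th conformal invariant of $[g_0]$ studied in~\cite{AmmHum-06}. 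I will nonetheless give below direct arguments, which are the transported versions of those of~\cite{AmmHum-06,GlaGroMar-78}; note $\nu^{(1)}=S_d^{d/2}$ follows at once from Hölder and Sobolev and is attained exactly at the Aubin--Talenti potentials $d(d-2)(1+|x|^2)^{-2}$, up to translation and dilation~\cite{LieLos-01}.

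For Part 1, the upper bound $\nu^{(2)}\le2\nu^{(1)}$ is the case $N=2$, $K=1$ of the subadditivity~\eqref{eq:subadditivity_ell}. For the matching lower bound I would argue as follows. Given $0\le V\in L^{d/2}(\R^d)$ with $\mu_2(V)\ge1$, the compact operator $K_V$ has a top eigenfunction $\phi_+>0$ everywhere (it is the Riesz potential of a non-negative eigenfunction) and, since $\mu_2(V)\ge1$, it also has an eigenfunction $h$ of eigenvalue $\mu\ge1$ with $\langle h,(-\Delta)^{1/2}\phi_+\rangle=0$; then $\phi:=(-\Delta)^{-1/2}h\in\dot H^1(\R^d)$ solves $-\Delta\phi=\mu^{-1}V\phi$ with $\mu^{-1}\le1$ and $\int V\phi\,\phi_+=0$, so $\phi$ changes sign and one may write $\phi=\phi^{+}-\phi^{-}$ with $\phi^{\pm}\in\dot H^1(\R^d)$ supported in the disjoint sets $\{\pm\phi>0\}$ of positive measure. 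Testing against $\phi^{\pm}$ gives $\int|\nabla\phi^{\pm}|^2=\mu^{-1}\int V(\phi^{\pm})^2\le\int V(\phi^{\pm})^2$, and combining with Sobolev and Hölder,
\[
S_d\,\|\phi^{\pm}\|_{2^*}^2\le\int|\nabla\phi^{\pm}|^2\le\int V(\phi^{\pm})^2\le\Big(\int_{\{\pm\phi>0\}}V^{d/2}\Big)^{2/d}\|\phi^{\pm}\|_{2^*}^2,
\]
so $\int_{\{\pm\phi>0\}}V^{d/2}\ge S_d^{d/2}=\nu^{(1)}$; adding the two disjoint pieces gives $\int_{\R^d}V^{d/2}\ge2\nu^{(1)}$, hence $\nu^{(2)}=2\nu^{(1)}$, i.e.\ $\ell^{(2)}_{0,d}=\ell^{(1)}_{0,d}$. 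Moreover the display is strict, since equality would force $\phi^{+}$ to be a Sobolev optimizer, hence everywhere positive, contradicting that it vanishes on $\{\phi<0\}$; thus $\int_{\R^d}V^{d/2}>2\nu^{(1)}$ for every admissible $V$, so $\ell^{(2)}_{0,d}$ has no optimizer. To obtain the stated shape of maximizing sequences I would feed this into Theorem~\ref{thm:bubble_critical} with $N=2$: since $K=1$ would produce an optimizer, we must have $K=2$, $N_1=N_2=1$, with $V^{(1)},V^{(2)}$ optimizers of $\ell^{(1)}_{0,d}$ normalized by $\mu_1(V^{(k)})=1$; these being the Aubin--Talenti potentials $d(d-2)(1+|x|^2)^{-2}$ up to translation and dilation, absorbing the latter into $(t_n^{(k)},x_n^{(k)})$ turns~\eqref{eq:bubble_v_critical} into precisely the stated asymptotic expansion, with $(t_n^{(k)},x_n^{(k)})$ satisfying~\eqref{eq:orthogonal_scalings}.

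For Part 2 I would use the Aubin--Talenti potential $V_1(x):=d(d-2)(1+|x|^2)^{-2}$ as a trial potential for $\nu^{(d+2)}$. Since $-\Delta U=V_1U$, we have $\mu_1(V_1)=1$ (indeed $V_1$ attains $\ell^{(1)}_{0,d}$). The conformal equivalence above turns $-\Delta\phi=\lambda V_1\phi$ into the constant-weight problem on $\bS^d$ for $L_{g_0}$, so, normalizing by $\mu_1(V_1)=1$, the eigenvalues of $K_{V_1}$ are $\mu_j(V_1)=\lambda_1(L_{g_0})/\lambda_j(L_{g_0})$, where $\lambda_j(L_{g_0})$ denotes the eigenvalues of $L_{g_0}$ listed with multiplicity: $\lambda_1(L_{g_0})=\tfrac{d(d-2)}4$ (multiplicity $1$, from the constants) and $\lambda_2(L_{g_0})=\cdots=\lambda_{d+2}(L_{g_0})=\tfrac{d(d+2)}4$ (multiplicity $d+1$, from the degree-one spherical harmonics). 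Hence $\mu_2(V_1)=\cdots=\mu_{d+2}(V_1)=\tfrac{d-2}{d+2}$, so $\tfrac{d+2}{d-2}V_1$ is admissible for $\nu^{(d+2)}$ and
\[
\nu^{(d+2)}\le\Big(\tfrac{d+2}{d-2}\Big)^{d/2}\int_{\R^d}V_1^{d/2}\,\rd x=\Big(\tfrac{d+2}{d-2}\Big)^{d/2}\nu^{(1)},\qquad\text{equivalently}\qquad \ell^{(d+2)}_{0,d}\ge(d+2)\Big(\tfrac{d-2}{d+2}\Big)^{d/2}\ell^{(1)}_{0,d}.
\]
Since $(d+2)\big(\tfrac{d-2}{d+2}\big)^{d/2}>1$ is equivalent, after raising to the power $2/d$, to $(d+2)^{2/d}>\tfrac{d+2}{d-2}$, which holds exactly for $d\ge7$ (with equality at $d=6$ and failure for $3\le d\le5$), this gives $\ell^{(d+2)}_{0,d}>\ell^{(1)}_{0,d}$ in dimensions $d\ge7$. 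For the last assertion I would let $N^{*}$ be the smallest integer in $\{3,\dots,d+2\}$ with $\ell^{(N^{*})}_{0,d}>\ell^{(1)}_{0,d}$ — it exists by the above and satisfies $N^{*}\ge3$ because $\ell^{(2)}_{0,d}=\ell^{(1)}_{0,d}$ by Part 1 — and observe that minimality yields $\ell^{(m)}_{0,d}\le\ell^{(1)}_{0,d}<\ell^{(N^{*})}_{0,d}$ for $1\le m\le N^{*}-1$, so $\ell^{(N^{*})}_{0,d}>\max_{1\le m<N^{*}}\ell^{(m)}_{0,d}$ and Corollary~\ref{cor:existence_binding_critical} provides an optimizer for $\ell^{(N^{*})}_{0,d}$.

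The step I expect to be the main obstacle is the strict Sobolev-type lower bound in Part 1: one must produce a genuine sign-changing eigenfunction for an arbitrary admissible $V$ (which is where the positivity of the ground state of $K_V$ enters), justify the nodal decomposition and the associated identities in $\dot H^1(\R^d)$, and invoke the classification of equality cases in the Sobolev inequality to rule out equality on the two nodal pieces. By contrast the conformal dictionary, the spectral computation for $K_{V_1}$ via spherical harmonics, the subadditivity input, and the elementary inequality $(d+2)^{2/d}>\tfrac{d+2}{d-2}$ for $d\ge7$ are routine.
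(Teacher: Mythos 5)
Your proposal is correct and follows essentially the same route as the paper's proof in Section~9: for $N=2$ both arguments hinge on the nodal decomposition of a second Birman--Schwinger eigenfunction, the Sobolev inequality applied to each nodal piece, and the rigidity of the Sobolev equality case, and for $N=d+2$ both use the trial potential $d(d+2)(1+|x|^2)^{-2}$.

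One notational caveat on your $N=2$ argument: you introduce $\phi_+$ as ``the top eigenfunction of $K_V$'' but then treat it simultaneously as though $(-\Delta)^{1/2}\phi_+$ were $L^2$-orthogonal to $h$ and as though $\phi_+$ itself were the positive zero mode in $\dot H^1(\R^d)$; these are different objects, and the orthogonality and positivity statements you use attach to different ones. The paper avoids this by working with $K'_V=\sqrt V(-\Delta)^{-1}\sqrt V$ directly (positive integral kernel, hence a non-negative Perron--Frobenius ground state) and setting $u:=V^{-1/2}f$ for the second eigenfunction $f$, from which the sign change and the identities $\int|\nabla u_\pm|^2=\mu_2(V)^{-1}\int Vu_\pm^2$ drop out cleanly. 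For $N=d+2$ you re-derive the spectrum of $K_{V_1}$ from the eigenvalues of the conformal Laplacian on $\bS^d$ (multiplicity $1$ and $d+1$ for $k=0,1$), whereas the paper cites the corresponding formula from the literature --- the numbers agree, and this is purely a matter of presentation. You also spell out the deduction of the final existence clause via Corollary~\ref{cor:existence_binding_critical}, which the paper leaves implicit; your argument there (smallest $N^*$ with $\ell^{(N^*)}_{0,d}>\ell^{(1)}_{0,d}$, combined with $\ell^{(m)}_{0,d}\geq\ell^{(1)}_{0,d}$ from Lemma~\ref{lem:pties_ell}) is the intended one.
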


The inequality~\eqref{eq:AH-GGM} is obtained by using the trial potential
$$V_L(x)=\left(L+\frac{d-2}{2}\right)\left(L+\frac{d}{2}\right)\frac{4}{(1+|x|^2)^2} \qquad\text{for $L=1$}$$
which has $\mu_2(V_L)$ of multiplicity $d+1$ and $\mu_1(V)$ of multiplicity 1, so that $\mu_{d+2}(V_L)=1$. After a stereographic projection, $V_L$ just becomes a constant potential on the unit sphere.

In~\cite{AmmHum-06} Ammann and Humbert introduced \emph{Yamabe invariants} for general manifolds. Those happen to coincide with our problem in the case of the sphere, so that Theorem~\ref{thm:monotony_critical} is contained in~\cite{AmmHum-06} after a proper reinterpretation. We quickly explain this now. In Section~\ref{sec:proof_AH} we provide a self-contained proof of Theorem~\ref{thm:monotony_critical} in $\R^d$, for the reader's convenience.

Let $\cM$ be a smooth $d$-dimensional compact manifold endowed with a metric~$g$. The conformal Laplacian is given by
$$
L_g = -\Delta_g + \frac{d-2}{4(d-1)} R_g,
$$
where $R_g$ is the scalar curvature. This operator is selfadjoint in $L^2(\cM)$ and, since $\cM$ is compact, has purely discrete spectrum. We denote by $\lambda_k(g)$ its eigenvalues in nondecreasing order and repeated according to multiplicities. Ammann and Humbert introduced in~\cite{AmmHum-06} the minimization problem
\begin{equation}
E_N := \inf_{g\in\mathcal C}\, ({\rm Vol}_g)^{\frac2d} \lambda_N(g) \,,
 \label{eq:EN_AH}
\end{equation}
where $\mathcal C$ is a conformal class of metrics on $\cM$. The number $E_N$ is called the \emph{$N$th Yamabe invariant}. The link with our problem is as follows.

\begin{lemma}[Link with $N$th Yamabe invariant]\label{lem:link_Yamabe}
When $\cM=\bS^d$ is the unit sphere and $\mathcal C$ is the conformal class of the standard metric, we have
\begin{equation}
\frac{N}{\ell^{(N)}_{0,d}}=(E_N)^{\frac{d}2}.
\label{eq:link_Yamabe}
\end{equation}
\end{lemma}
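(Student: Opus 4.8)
The plan is to establish the identity $N/\ell^{(N)}_{0,d}=(E_N)^{d/2}$ by transporting the variational problem defining $\ell^{(N)}_{0,d}$ on $\R^d$ to the sphere $\bS^d$ via stereographic projection, using the conformal covariance of the relevant operators. Recall that the stereographic projection $\pi\colon \bS^d\setminus\{\text{north pole}\}\to\R^d$ pulls the Euclidean metric back to $\phi^{\frac{4}{d-2}}g_{\bS^d}$ where $g_{\bS^d}$ is the round metric and $\phi$ is the standard positive conformal factor (proportional to $(1+|x|^2)^{-(d-2)/2}$ in the $\R^d$ chart). The key classical fact is the conformal covariance of the conformal Laplacian: under a conformal change $\tilde g=\psi^{\frac{4}{d-2}}g$ one has $L_{\tilde g}(u)=\psi^{-\frac{d+2}{d-2}}L_g(\psi u)$, and in particular on $\R^d$ with the flat metric $R=0$ so that $L_{g_{\rm eucl}}=-\Delta$, while on $\bS^d$ the scalar curvature is constant so $L_{g_{\bS^d}}=-\Delta_{g_{\bS^d}}+\frac{d(d-2)}{4}$.

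The main steps are as follows. First I would rewrite the eigenvalue problem $(-\Delta-V)f=0$ on $\R^d$ with $f\in\dot H^1(\R^d)$ as an eigenvalue problem on $\bS^d$. Setting $F:=\phi^{-1}(f\circ\pi^{-1})$ and $W:=\phi^{-\frac{4}{d-2}}(V\circ\pi^{-1})$, conformal covariance turns $(-\Delta-V)f=0$ into $(L_{g_{\bS^d}}-W)F=0$, i.e. $L_{g_{\bS^d}}F=WF$ on $\bS^d$; more generally the Birman–Schwinger-type pencil is preserved, so the condition "$\mu_N(V)=1$", which (by the Euler–Lagrange analysis, or directly via the variational characterization of $\mu_N$) means that $-\Delta-V$ has an $N$-dimensional space of $\dot H^1$-solutions of $(-\Delta-V)f=0$ with the right max–min level, becomes the condition that $L_{g_{\bS^d}}^{-1/2}W L_{g_{\bS^d}}^{-1/2}$ has $N$ eigenvalues $\geq 1$, equivalently (replacing $W$ by $\lambda_N(g)W/\lambda$ bookkeeping) that $\lambda_N$ of the operator $L_{g_{\bS^d}}$ perturbed in the conformal class is controlled. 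Second, I would track the normalization integrals: the $L^{d/2}$-norm is conformally invariant, $\int_{\R^d}V^{d/2}\,dx=\int_{\bS^d}W^{d/2}\,d\mathrm{vol}_{g_{\bS^d}}$, since $dx=\phi^{\frac{2d}{d-2}}d\mathrm{vol}_{g_{\bS^d}}$ exactly cancels the conformal factor in $W^{d/2}$. Third, I would identify the right-hand side of the sphere problem: writing $\tilde g=W^{\frac{2}{d-2}}\,u^{\ldots}$... more cleanly, given $W>0$ set $\tilde g:=(W/c)^{\frac{2}{d-2}}g_{\bS^d}$ for a constant $c$; then $L_{g_{\bS^d}}F=WF$ becomes, by conformal covariance, $L_{\tilde g}(\ldots)=c(\ldots)$, so $c$ is an eigenvalue $\lambda_k(\tilde g)$ of the conformal Laplacian of the metric $\tilde g$ in the conformal class $\mathcal C$, and $\mathrm{Vol}_{\tilde g}=\int_{\bS^d}(W/c)^{d/2}\,d\mathrm{vol}_{g_{\bS^d}}=c^{-d/2}\int_{\bS^d}W^{d/2}$. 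Putting $\mathrm{vol}_{\tilde g}^{2/d}\lambda_N(\tilde g)$ together with the normalization $\mu_N(V)=1$ (which forces the relevant eigenvalue to be the $N$th one and equal to $c$) gives exactly $\big(\int W^{d/2}\big)^{2/d}=\mathrm{vol}_{\tilde g}^{2/d}\lambda_N(\tilde g)$, and optimizing over $V\geq0$ — equivalently over $W>0$, equivalently over metrics $\tilde g\in\mathcal C$ — yields $\inf_V\int_{\R^d}V^{d/2}=\inf_{g\in\mathcal C}\mathrm{vol}_g\,\lambda_N(g)^{d/2}=(E_N)^{d/2}$. Combining with $\ell^{(N)}_{0,d}=N\big(\inf_{\mu_N(V)=1}\int V^{d/2}\big)^{-1}$ from \eqref{eq:def_ell_equivalent} gives \eqref{eq:link_Yamabe}.

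**The main obstacle** I anticipate is the careful bookkeeping at the endpoints/degenerate cases: one must handle potentials $V$ that are not strictly positive (so that $\tilde g=(W/c)^{2/(d-2)}g_{\bS^d}$ is only a degenerate "metric") and, conversely, the fact that the infimum in $E_N$ is over genuine metrics — this requires a density/approximation argument showing that nonnegative $L^{d/2}$ weights and smooth conformal metrics give the same infimum, which is exactly the kind of subtlety handled in \cite{AmmHum-06}. One must also be careful that the north pole, removed by stereographic projection, does not contribute: since $F=\phi^{-1}(f\circ\pi^{-1})$ and $f\in\dot H^1(\R^d)$ decays, $F$ extends across the north pole as an $H^1(\bS^d)$ function, and the point mass there is negligible because points have zero $H^1$-capacity in $d\geq3$. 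Finally one should double-check that the $N$th max–min level $\mu_N$ of the Birman–Schwinger operator corresponds precisely to the $N$th eigenvalue $\lambda_N(g)$ (and not some shifted index), which follows from the monotonicity of conformal-Laplacian eigenvalues under the weight and the variational characterization. Since a self-contained proof is deferred to Section~\ref{sec:proof_AH}, here it suffices to record the dictionary above and cite \cite{AmmHum-06} for the parts already established there.
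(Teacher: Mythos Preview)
Your proposal is correct and follows essentially the same approach as the paper: both rely on the conformal covariance of the conformal Laplacian to identify the Birman--Schwinger eigenvalues $\mu_N(V)$ on $\R^d$ with the eigenvalues $\lambda_N(g)$ of $L_g$ for a metric $g$ in the round conformal class on $\bS^d$, together with the conformal invariance of the $L^{d/2}$ norm. The only difference is directional: the paper first derives, on a general compact manifold, the Birman--Schwinger reformulation $\lambda_N(g)=1/\mu_N\big(u^{2/(d-2)}L_{g_0}^{-1}u^{2/(d-2)}\big)$ for $g=u^{4/(d-2)}g_0$, and then specializes to $\bS^d$ and stereographically projects to $\R^d$ setting $V=u^{4/(d-2)}$; you instead start on $\R^d$ and lift to $\bS^d$. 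Your concerns about degenerate weights and the removed pole are legitimate (and the paper glosses over them as well), but they are handled by standard density arguments and the vanishing $H^1$-capacity of a point in $d\geq3$.
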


The proof uses the Birman-Schwinger principle and the fact that $\R^d$ is conformally equivalent (via stereographic projection) to $\bS^d$ with a point removed~\cite{LieLos-01}. The details are provided in Appendix~\ref{app:Yamabe}.

Ammann and Humbert prove several results on $E_N$, some of which have consequences to our problem. For instance they show a compactness criterion for $N=2$ which they can verify for certain manifolds, see~\cite[Thm.~1.4 \& 1.5]{AmmHum-06}. This is equivalent to our Corollary~\ref{cor:existence_binding_critical}. They also derive the Euler--Lagrange equation in~\cite[Thm.~1.6]{AmmHum-06}. In~\cite[Thm.~4.1]{AmmHum-06} they show that the second Yamabe invariant $E_2$ is optimal for two disjoint spheres of equal radius, which provides the first part of Theorem~\ref{thm:monotony_critical}. Finally, the second part of Theorem~\ref{thm:monotony_critical} can be found in~\cite[Prop.~7.1]{AmmHum-06} but it is in fact also in a slightly different form in~\cite{GlaGroMar-78}, and has recently been rewritten in~\cite{Frank-23}.

Ammann and Humbert explicitly mention in~\cite[Sec.~7]{AmmHum-06} that most of their results are limited to $N=2$. Our techniques used to prove Theorems~\ref{thm:bubble_critical} and~\ref{thm:equation_critical} could be useful in the case of general manifolds, but we will not go further in this direction in this article.

\subsection{Application to the CLR constant $L^{(N)}_{0,d}$}\label{sec:CLR_finite_rank}
Let us now turn our attention to the finite-rank CLR inequality. We call $\cN_-(V)$ the largest possible integer $n$ so that $\mu_n(V)\geq1$. This also equals the number of negative eigenvalues plus zero-energy resonances of $-\Delta-V$ (that is, solutions of $(-\Delta-V)f=0$ with $0\neq f\in \dot{H}^1(\R^d)\setminus L^2(\R^d)$). The finite-rank CLR inequality can be rewritten in the form
\begin{equation}
\min\big\{N,\cN_-(V)\big\}\leq L^{(N)}_{0,d} \int_{\R^d}V(x)^{\frac{d}2}\,\rd x.
\label{eq:CLR_N2}
\end{equation}
The best constant is
$$L^{(N)}_{0,d}=\max_{1\leq n\leq N}\ell^{(n)}_{0,d}$$
and it is the same if we only count negative eigenvalues. An optimizer for~\eqref{eq:CLR_N2} is by definition an optimizer for one the largest $\ell^{(n)}_{0,d}$ for $1\leq n\leq N$. Counting the zero-energy modes is mandatory when investigating the existence of optimizers. Those modes are always present for optimizers, as we have seen in Theorem~\ref{thm:equation_critical}.  The following is a reformulation of Corollary~\ref{cor:existence_binding_critical}.

\begin{corollary}[Existence of optimizers for $L^{(N)}_{0,d}$]\label{cor:existence_critical}
Let $\gamma=0$ in dimension $d\geq3$ and $N\geq1$. Let $M\leq N$ be the smallest integer so that
$$\ell^{(M)}_{0,d}=L^{(N)}_{0,d}.$$
Then all the optimizing sequences $(V_n)$ for $\ell^{(M)}_{0,d}$ are compact up to translations and dilations, and converge in $L^{d/2}(\R^d)$ towards an optimizer $V_*$ for $\ell^{(M)}_{0,d}$, hence of $L^{(N)}_{0,d}$.
\end{corollary}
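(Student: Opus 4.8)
The plan is to obtain this corollary as a direct consequence of Corollary~\ref{cor:existence_binding_critical} applied with $N$ replaced by $M$, the only point requiring an argument being that the strict sub-additivity hypothesis~\eqref{eq:binding_critical} is satisfied by $\ell^{(M)}_{0,d}$. First recall that $L^{(N)}_{0,d}=\max_{1\leq n\leq N}\ell^{(n)}_{0,d}$, so an integer $M\in\{1,\dots,N\}$ with $\ell^{(M)}_{0,d}=L^{(N)}_{0,d}$ indeed exists, and the decisive consequence of choosing $M$ \emph{minimal} is that
$$\ell^{(n)}_{0,d}<\ell^{(M)}_{0,d}\qquad\text{for every }1\leq n\leq M-1:$$
one always has $\ell^{(n)}_{0,d}\leq L^{(N)}_{0,d}=\ell^{(M)}_{0,d}$ for $n\leq N$, and an equality for some $n<M$ would contradict the minimality of $M$. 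If $M=1$ there is nothing to do beyond the classical compactness, up to translations and dilations, of optimizing sequences for the Sobolev quotient $\ell^{(1)}_{0,d}$; this also follows from Theorem~\ref{thm:bubble_critical} with $N=1$, which forces $K=1$ and hence no splitting. So we may assume $M\geq2$.

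Next, I would fix $K\in\{1,\dots,M-1\}$ and observe that then both $K$ and $M-K$ lie in $\{1,\dots,M-1\}$, so by the displayed strict inequality above we have simultaneously $\ell^{(K)}_{0,d}<\ell^{(M)}_{0,d}$ and $\ell^{(M-K)}_{0,d}<\ell^{(M)}_{0,d}$. Dividing and adding,
$$\frac{K}{\ell^{(K)}_{0,d}}+\frac{M-K}{\ell^{(M-K)}_{0,d}}>\frac{K}{\ell^{(M)}_{0,d}}+\frac{M-K}{\ell^{(M)}_{0,d}}=\frac{M}{\ell^{(M)}_{0,d}},$$
which is exactly the strict sub-additivity condition~\eqref{eq:binding_critical} for the index $M$. (In other words, the a priori non-strict bound~\eqref{eq:subadditivity_ell} becomes strict precisely because each proper sub-index $n<M$ has a strictly smaller $\ell^{(n)}_{0,d}$.) A general optimizing sequence $(V_n)$ for $\ell^{(M)}_{0,d}$ can be brought to the normalization $\mu_M(V_n)=1$ required by Corollary~\ref{cor:existence_binding_critical} by replacing $V_n$ with $V_n/\mu_M(V_n)$, which leaves the quotient in~\eqref{eq:def_ell} unchanged.

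Corollary~\ref{cor:existence_binding_critical} then yields that every optimizing sequence for $\ell^{(M)}_{0,d}$ is compact modulo translations and dilations and converges in $L^{d/2}(\R^d)$, along a subsequence, to an optimizer $V_*$ of $\ell^{(M)}_{0,d}$. Finally, since $\ell^{(M)}_{0,d}=L^{(N)}_{0,d}=\max_{1\leq n\leq N}\ell^{(n)}_{0,d}$, the limit $V_*$ is, by the very definition of an optimizer for the finite-rank CLR inequality~\eqref{eq:CLR_N2} recalled in Subsection~\ref{sec:CLR_finite_rank} (namely an optimizer for one of the largest of the $\ell^{(n)}_{0,d}$, $1\leq n\leq N$), an optimizer for $L^{(N)}_{0,d}$. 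This completes the argument.

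I do not expect a genuine obstacle here: all of the analytic substance — the bubble extraction, the local compactness, and the resulting compactness criterion — is already packaged in Theorem~\ref{thm:bubble_critical} and Corollary~\ref{cor:existence_binding_critical}. The only thing that must be gotten right is the elementary combinatorial observation that the \emph{smallest} index realizing the maximum of $n\mapsto\ell^{(n)}_{0,d}$ on $\{1,\dots,N\}$ makes every earlier value strictly smaller, which is exactly what converts the non-strict inequality~\eqref{eq:subadditivity_ell} into the strict form~\eqref{eq:binding_critical}; together with the routine rescaling that normalizes an arbitrary optimizing sequence, this is the whole content of the proof.
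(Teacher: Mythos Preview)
Your proof is correct and follows essentially the same route as the paper's: the minimality of $M$ forces $\ell^{(m)}_{0,d}<\ell^{(M)}_{0,d}$ for all $m<M$, which is exactly the hypothesis in the last part of Corollary~\ref{cor:existence_binding_critical}, and then compactness follows (with the case $M=1$ handled directly by Theorem~\ref{thm:bubble_critical}). Your explicit verification of the strict sub-additivity inequality is just an unpacking of that last sentence of Corollary~\ref{cor:existence_binding_critical}, so there is no substantive difference.
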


\begin{proof}[Proof of Corollary~\ref{cor:existence_critical}]
By the definition of $M$ we have either $M=1$ or $\ell^{(M)}_{0,d}>\ell^{(m)}_{0,d}$ for all $m<M$. The compactness of maximizing sequences up to translations and dilations follows from Theorem~\ref{thm:bubble_critical} in the first case and Corollary~\ref{cor:existence_binding_critical} in the second case.
\end{proof}

An optimizer potential $V_*$ for the CLR inequality
\begin{equation}
\cN_-(V)\leq L_{0,d} \int_{\R^d}V(x)^{\frac{d}2}\,\rd x
\label{eq:CLR}
\end{equation}
is by definition a potential $V_*\in L^{d/2}(\R^d)$ for which there is equality in~\eqref{eq:CLR}. Since $V_*\in L^{d/2}(\R^d)$, this potential generates finitely many eigenvalues and zero-energy resonances. We thus immediately obtain the following.

\begin{corollary}[Existence of optimizers for CLR]
There exists an optimizer $V_*\in L^{d/2}(\R^d)$ for the CLR inequality~\eqref{eq:CLR} if and only if $L_{0,d}=L^{(N)}_{0,d}$ for some $N\geq1$, that is, $n\mapsto L^{(n)}_{0,d}$ is constant for $n$ large enough.
\end{corollary}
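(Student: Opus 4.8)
The plan is to prove the two directions of the equivalence separately, using the compactness results already established.

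\medskip

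For the ``if'' direction, suppose $L_{0,d}=L^{(N)}_{0,d}$ for some $N\geq1$. Since $n\mapsto L^{(n)}_{0,d}$ is non-decreasing (as $\max_{1\leq m\leq n}\ell^{(m)}_{0,d}$) and converges to $L_{0,d}$, the assumption forces $L^{(n)}_{0,d}=L_{0,d}$ for all $n\geq N$, and we may take $N$ minimal with this property. Let $M\leq N$ be the smallest integer with $\ell^{(M)}_{0,d}=L^{(N)}_{0,d}=L_{0,d}$. By Corollary~\ref{cor:existence_critical}, $\ell^{(M)}_{0,d}$ admits an optimizer $V_*$, normalized so that $\mu_M(V_*)=1$. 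I then claim $V_*$ is an optimizer for the CLR inequality~\eqref{eq:CLR}. Indeed $\mu_M(V_*)=1$ means $K_{V_*}$ has at least $M$ eigenvalues $\geq 1$; by the ``no unfilled shell'' statement~\eqref{eq:no_unfilled_shell_critical} in Theorem~\ref{thm:equation_critical} we have $\mu_{M+1}(V_*)<1$, so $\cN_-(V_*)=M$ exactly. Hence
$$
\frac{\cN_-(V_*)}{\int_{\R^d}V_*^{d/2}}=\frac{M}{\int_{\R^d}V_*^{d/2}}=\ell^{(M)}_{0,d}=L_{0,d},
$$
using that $V_*$ saturates the definition~\eqref{eq:def_ell} of $\ell^{(M)}_{0,d}$. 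So equality holds in~\eqref{eq:CLR}.

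\medskip

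For the ``only if'' direction, suppose there exists an optimizer $V_*\in L^{d/2}(\R^d)$ for~\eqref{eq:CLR}. Since $V_*\in L^{d/2}(\R^d)$, the Birman--Schwinger operator $K_{V_*}$ is compact, so $V_*$ generates only finitely many eigenvalues and zero-energy resonances; set $M:=\cN_-(V_*)<\ii$, which we may assume is $\geq 1$ (otherwise the left side of~\eqref{eq:CLR} vanishes and $V_*$ cannot be an optimizer since $L_{0,d}>0$). Then
$$
L_{0,d}=\frac{\cN_-(V_*)}{\int_{\R^d}V_*^{d/2}}=\frac{M}{\int_{\R^d}V_*^{d/2}}\leq \ell^{(M)}_{0,d}\leq L^{(M)}_{0,d}\leq L_{0,d},
$$
where the first inequality is the definition~\eqref{eq:def_ell} of $\ell^{(M)}_{0,d}$ (with $\mu_M(V_*)\geq1$ since $\cN_-(V_*)=M$), the second is $\ell^{(M)}_{0,d}\leq L^{(M)}_{0,d}=\max_{1\leq m\leq M}\ell^{(m)}_{0,d}$, and the last is the general bound $L^{(M)}_{0,d}\leq L_{0,d}$. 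All inequalities are therefore equalities, giving $L^{(M)}_{0,d}=L_{0,d}$ with $N=M$. Finally, if $n\mapsto L^{(n)}_{0,d}$ is constant for $n$ large, this constant value is $\lim_{n\to\ii}L^{(n)}_{0,d}=L_{0,d}$, which is the previous conclusion; conversely $L_{0,d}=L^{(N)}_{0,d}$ for some $N$ together with monotonicity and convergence forces $L^{(n)}_{0,d}=L_{0,d}$ for all $n\geq N$, i.e. eventual constancy. This closes the equivalence.

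\medskip

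The argument is essentially a bookkeeping exercise once Theorems~\ref{thm:bubble_critical}, \ref{thm:equation_critical} and Corollary~\ref{cor:existence_critical} are in hand; the only genuinely non-trivial input is the ``no unfilled shell'' inequality $\mu_{M+1}(V_*)<\mu_M(V_*)=1$, which is what guarantees that the optimizer $V_*$ of $\ell^{(M)}_{0,d}$ has \emph{exactly} $M$ (not more) negative eigenvalues plus resonances and hence is an honest CLR optimizer. I expect that to be the one point requiring care in the writeup, since without it one could only conclude $\cN_-(V_*)\geq M$ and the ratio $\cN_-(V_*)/\int V_*^{d/2}$ might a priori exceed $\ell^{(M)}_{0,d}$; but as this strict spectral gap is already part of Theorem~\ref{thm:equation_critical}, the proof of the corollary itself is short.
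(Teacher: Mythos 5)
Your proof is correct and is essentially the detailed version of the argument the paper leaves implicit (the paper states only that the corollary follows ``immediately'' from Corollary~\ref{cor:existence_critical} and the finiteness of $\cN_-(V_*)$ for $V_*\in L^{d/2}(\R^d)$). One small remark: the appeal to the ``no unfilled shell'' inequality~\eqref{eq:no_unfilled_shell_critical} in the ``if'' direction is not actually needed --- once you know $\int_{\R^d}V_*^{d/2}=M/\ell^{(M)}_{0,d}=M/L_{0,d}$, the CLR inequality~\eqref{eq:CLR} itself gives $\cN_-(V_*)\leq L_{0,d}\int V_*^{d/2}=M$, while $\mu_M(V_*)=1$ gives $\cN_-(V_*)\geq M$, so $\cN_-(V_*)=M$ follows for free.
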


Our last result is a reformulation of Theorem~\ref{thm:monotony_critical}.

\begin{corollary}[(Non-)monotonicity]\label{cor:L2=L1} We have
$$L^{(2)}_{0,d}=L^{(1)}_{0,d}\quad \text{in all dimensions $d\geq3$}$$
and the optimizers for $L^{(2)}_{0,d}$ are exactly the Sobolev optimizers for $L^{(1)}_{0,d}$. We also have
$$L_{0,d}\geq L^{(d+2)}_{0,d}>L^{(1)}_{0,d}\quad \text{in dimensions $d\geq7$.}$$
\end{corollary}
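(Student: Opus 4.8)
The plan is to deduce everything from the identity $L^{(N)}_{0,d}=\max_{1\le n\le N}\ell^{(n)}_{0,d}$ combined with Theorem~\ref{thm:monotony_critical}. For the equality $L^{(2)}_{0,d}=L^{(1)}_{0,d}$, simply observe that $L^{(1)}_{0,d}=\ell^{(1)}_{0,d}$ and $L^{(2)}_{0,d}=\max\big(\ell^{(1)}_{0,d},\ell^{(2)}_{0,d}\big)$, so the claim follows at once from~\eqref{eq:N=2_critical}.

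For the identification of the optimizers: by the definition adopted in Subsection~\ref{sec:CLR_finite_rank}, a potential optimizes $L^{(2)}_{0,d}$ if and only if it optimizes some $\ell^{(n)}_{0,d}$, $n\in\{1,2\}$, attaining the maximum $\max(\ell^{(1)}_{0,d},\ell^{(2)}_{0,d})$. Since by~\eqref{eq:N=2_critical} both $\ell^{(1)}_{0,d}$ and $\ell^{(2)}_{0,d}$ attain it, the set of optimizers of $L^{(2)}_{0,d}$ is the union of the optimizers of $\ell^{(1)}_{0,d}$ and of $\ell^{(2)}_{0,d}$; but Theorem~\ref{thm:monotony_critical} asserts the latter set is empty, so the optimizers of $L^{(2)}_{0,d}$ are exactly the optimizers of $\ell^{(1)}_{0,d}$, i.e.\ the Sobolev optimizers for $L^{(1)}_{0,d}$. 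As a consistency check, one may use the strict inequality $\mu_2(V_*)<\mu_1(V_*)=1$ from Theorem~\ref{thm:equation_critical} with $N=1$ to see that such a $V_*$ has exactly one negative eigenvalue or zero-energy resonance, so it is a genuine optimizer of the finite-rank CLR functional~\eqref{eq:CLR_N2} at $N=2$ and not merely of the auxiliary problem $\ell^{(1)}_{0,d}$.

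Finally, in dimension $d\ge 7$ we have $L_{0,d}=\sup_{N\ge1}L^{(N)}_{0,d}\ge L^{(d+2)}_{0,d}$, and $L^{(d+2)}_{0,d}=\max_{1\le n\le d+2}\ell^{(n)}_{0,d}\ge\ell^{(d+2)}_{0,d}>\ell^{(1)}_{0,d}=L^{(1)}_{0,d}$, the strict inequality being~\eqref{eq:AH-GGM}. Chaining these estimates yields $L_{0,d}\ge L^{(d+2)}_{0,d}>L^{(1)}_{0,d}$. In short, the corollary is a bookkeeping translation of Theorem~\ref{thm:monotony_critical}, and there is no genuine obstacle at this stage; the real work is upstream, namely the Ammann--Humbert identity $\ell^{(2)}_{0,d}=\ell^{(1)}_{0,d}$ together with the non-existence of optimizers for $\ell^{(2)}_{0,d}$, and the trial-function bound $\ell^{(d+2)}_{0,d}>\ell^{(1)}_{0,d}$, both established (or reproved) in Section~\ref{sec:proof_AH}.
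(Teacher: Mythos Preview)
Your proposal is correct and matches the paper's approach: the paper simply states that Corollary~\ref{cor:L2=L1} is a reformulation of Theorem~\ref{thm:monotony_critical} via the identity $L^{(N)}_{0,d}=\max_{1\le n\le N}\ell^{(n)}_{0,d}$, and your argument spells out precisely this bookkeeping, including the non-existence of optimizers for $\ell^{(2)}_{0,d}$ to pin down the optimizers of $L^{(2)}_{0,d}$.
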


The rest of the paper is devoted to the proof of all our results.

%%%%%%%%%%%%%%%%%%%%%%%%%%%%%%%%%%%%%%%%%%%%%%%%%%%%%%%%%%%%%%%%
%%%%%%%%%%%%%%%%%%%%%%%%%%%%%%%%%%%%%%%%%%%%%%%%%%%%%%%%%%%%%%%%
\section{Proof of Theorem~\ref{thm:bubble} (bubble decomposition)}\label{sec:proof_bubbles}
%%%%%%%%%%%%%%%%%%%%%%%%%%%%%%%%%%%%%%%%%%%%%%%%%%%%%%%%%%%%%%%%
%%%%%%%%%%%%%%%%%%%%%%%%%%%%%%%%%%%%%%%%%%%%%%%%%%%%%%%%%%%%%%%%

We provide a self-contained proof of the bubble decomposition in the subcritical case, which does not rely on any similar results in Sobolev spaces. In Section~\ref{sec:proof_bubble_critical} below, we will provide an argument relying on existing bubble decompositions for the critical case $\gamma=0$. A similar proof can be used in the subcritical case studied here.

Throughout the proof we assume that $\gamma>1/2$ in $d=1$ and $\gamma>0$ for $d\geq2$. Let $V_n\geq0$ be a normalized maximizing sequence for $L^{(N)}_{\gamma,d}$, that is,
\begin{equation}
 \int_{\R^d}V_n(x)^{\gamma+d/2}\,\rd x=1,\qquad \lim_{n\to\ii}\sum_{j=1}^N|\lambda_j(-\Delta-V_n)|^\gamma=L^{(N)}_{\gamma,d}.
 \label{eq:choice_V_n2}
\end{equation}
The $N$ min-max levels $\lambda_{j,n}:=\lambda_j(-\Delta-V_n)$ are then bounded. After extracting a subsequence, we have
$$\lambda_{j,n}\to \Lambda_j\leq0,\qquad \forall j=1,...,N.$$
Since $L^{(N)}_{\gamma,d}\geq L^{(1)}_{\gamma,d}>0$, we know that $\Lambda_{1}<0$. The eigenvalues cannot all tend to 0.

\subsubsection*{Step 1. Absence of vanishing}
First we prove that $V_n$ cannot vanish in the sense of~\cite{Lions-84}. The precise statement is that for all $r>0$ we have
\begin{equation}
\boxed{\liminf_{n\to\ii}\sup_{y\in\R^d}\int_{B_r(y)}V_n^{\gamma+\frac{d}2}>0}
\label{eq:vanishing}
\end{equation}
where $B_r(y)$ denotes the ball of radius $r$ centered at $y$. Since $B_1$ can be covered by finitely many balls of radius $r$, the property~\eqref{eq:vanishing} for all $r>0$ is equivalent to the same for $r=1$. Its proof follows immediately from the following lemma, together with the fact that $\lambda_{1,n}\nrightarrow0$, as we have just seen.

\begin{lemma}[Vanishing]
Let $V\geq0$ be such that $\int_{\R^d}V^{\gamma+\frac{d}2}\leq 1$. Then
\begin{equation}
 \big|\lambda_1(-\Delta-V)\big|\leq C\left(\sup_{y\in\R^d}\int_{B_1(y)}V^{\gamma+\frac{d}2}\right)^{\frac{2}{d+2\gamma}}
\label{eq:estimate_lambda_1_vanishing}
\end{equation}
for a constant $C$ depending only on $\gamma$ and $d$.
\end{lemma}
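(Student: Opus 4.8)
The plan is to bound $|\lambda_1(-\Delta-V)|$ using the variational characterization $-\lambda_1(-\Delta-V) = \sup_{\|u\|_{L^2}=1} \left(\int_{\R^d} V|u|^2 - \int_{\R^d}|\nabla u|^2\right)$, and to control the term $\int V|u|^2$ by the local mass of $V$ together with interpolation inequalities. First I would introduce the abbreviation $\eta := \sup_{y\in\R^d}\int_{B_1(y)} V^{\gamma+d/2}$ and, after a partition of unity or a covering argument, reduce the estimate of $\int_{\R^d} V|u|^2$ to a sum of local contributions $\int_{B_1(y_i)} V|u|^2$. On each unit ball, Hölder's inequality gives $\int_{B_1(y)} V|u|^2 \leq \left(\int_{B_1(y)} V^{\gamma+d/2}\right)^{\frac{2}{d+2\gamma}} \left(\int_{B_1(y)} |u|^{q}\right)^{2/q}$ with the conjugate exponent $q = \frac{2(\gamma+d/2)}{\gamma+d/2-1} = \frac{d+2\gamma}{d/2+\gamma-1}$; note $q$ is subcritical precisely because $\gamma>0$ in $d\geq 2$ and $\gamma>1/2$ in $d=1$, which is where the hypothesis on $\gamma$ enters. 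The prefactor is at most $\eta^{\frac{2}{d+2\gamma}}$ on every ball.

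Next I would apply a Gagliardo–Nirenberg–Sobolev inequality on the unit ball (or on $\R^d$ after extending, using the standard localized form) to bound $\|u\|_{L^q(B_1(y))}^2$ by a combination of $\|\nabla u\|_{L^2}^2$ and $\|u\|_{L^2}^2$ on a slightly larger ball, with a small coefficient in front of the gradient term available by scaling. Summing over a locally finite covering of $\R^d$ by unit balls (with bounded overlap) then yields an estimate of the shape
\begin{equation*}
\int_{\R^d} V|u|^2 \leq C\,\eta^{\frac{2}{d+2\gamma}}\left(\theta \int_{\R^d}|\nabla u|^2 + \theta^{-\alpha}\int_{\R^d}|u|^2\right)
\end{equation*}
for any $\theta>0$, with $\alpha>0$ depending on $\gamma,d$, where I have used $\|u\|_{L^2(\R^d)}=1$. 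Here I also use $\int_{\R^d}V^{\gamma+d/2}\leq 1$ only implicitly, through the fact that $\eta\leq 1$, which lets me replace $\eta^{\frac{2}{d+2\gamma}}$ by itself without worrying about large powers; actually the cleanest route is to not even need $\int V^{\gamma+d/2}\le 1$ except to know $\eta\le 1$, so that choosing $\theta$ comparable to $\eta^{\frac{2}{d+2\gamma}}$ (up to the right power) is legitimate.

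Finally I would optimize over $\theta$: choosing $\theta = c\,\eta^{\frac{2}{d+2\gamma}}$ makes the gradient contribution absorbable (it becomes $\leq \tfrac12\int|\nabla u|^2$, say), leaving
\begin{equation*}
\int_{\R^d}V|u|^2 - \int_{\R^d}|\nabla u|^2 \leq \tfrac12\int_{\R^d}|\nabla u|^2 - \int_{\R^d}|\nabla u|^2 + C'\eta^{\frac{2}{d+2\gamma}(1+\alpha')} \leq C\,\eta^{\frac{2}{d+2\gamma}},
\end{equation*}
after checking that the exponent collapses to exactly $\frac{2}{d+2\gamma}$ (this is dictated by the scaling of the problem: $|\lambda_1|$ has the dimension of $\eta^{2/(d+2\gamma)}$ under $V\mapsto t^2V(t\cdot)$). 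Taking the supremum over $u$ gives \eqref{eq:estimate_lambda_1_vanishing}. The main obstacle I expect is the bookkeeping in the covering/summation step — keeping the constant uniform in $y$ and making sure the bounded-overlap property lets one pass from a sum of local GNS inequalities back to a global one without losing the correct power of $\eta$ — together with verifying that the final exponent is exactly $\frac{2}{d+2\gamma}$ rather than something slightly off, which a careful scaling check (replacing $V$ by $t^2V(t\cdot)$ and tracking both sides) will confirm.
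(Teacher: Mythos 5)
Your overall plan---localize the quadratic form, apply H\"older and a Gagliardo--Nirenberg inequality on each piece, and optimize a free parameter---is the same as the paper's. The paper uses a continuous IMS partition at scale $R\ge1$ together with the $N=1$ Lieb--Thirring inequality on each piece, which fixes the gradient coefficient at exactly $1$, and then optimizes over $R$; you instead keep the balls at unit scale and optimize the Young parameter $\theta$ in the GNS splitting. Your intermediate estimate $\int V|u|^2 \le C\eta^{2/(d+2\gamma)}(\theta\int|\nabla u|^2 + \theta^{-\alpha}\int|u|^2)$ (with $\eta:=\sup_y\int_{B_1(y)}V^{\gamma+d/2}$ and, for your exponent $q$, the GNS interpolation parameter $a=d/(d+2\gamma)$, hence $\alpha=a/(1-a)=d/(2\gamma)$) is correct for $\theta\le1$. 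The optimization, however, is done incorrectly: with $\theta=c\,\eta^{2/(d+2\gamma)}$ the $L^2$ remainder is $Cc^{-\alpha}\eta^{2(1-\alpha)/(d+2\gamma)}$, whose exponent is \emph{strictly smaller} than $2/(d+2\gamma)$ since $\alpha>0$ (and is negative once $\gamma<d/2$). For $\eta\le1$ this remainder is therefore \emph{larger} than $\eta^{2/(d+2\gamma)}$, so the displayed step ``$C'\eta^{\frac{2}{d+2\gamma}(1+\alpha')}\le C\eta^{\frac{2}{d+2\gamma}}$'' goes the wrong way. No scaling check will rescue this, because $\eta$ is a supremum over balls of \emph{fixed} radius $1$ and is not scale-covariant.

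The correct choice is to take $\theta$ to be a small \emph{constant}, e.g.\ $\theta=\min(1,\,(2C)^{-1})$. Then, since $\eta\le1$, the gradient coefficient $C\eta^{2/(d+2\gamma)}\theta$ is $\le1/2$ and is absorbed, leaving $|\lambda_1|\le C\theta^{-\alpha}\eta^{2/(d+2\gamma)}=C'\eta^{2/(d+2\gamma)}$, which is the claimed bound. This is where the hypothesis $\int V^{\gamma+d/2}\le1$ really enters: it gives $\eta\le1$, which is exactly what makes a fixed constant $\theta$ admissible uniformly. (The paper also uses the hypothesis at the very start, via the crude bound $|\lambda_1|^\gamma\le L^{(1)}_{\gamma,d}$, so that it may restrict to $\eta$ small and ignore the constraint $R\ge1$.) With this single correction---constant $\theta$ rather than $\theta\sim\eta^{2/(d+2\gamma)}$---your argument becomes a legitimate variant of the paper's proof; the two differ only in which parameter is left free, Young's $\theta$ versus the partition scale $R$.
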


\begin{proof}
We have $|\lambda_1(-\Delta-V)|^\gamma\leq L^{(1)}_{\gamma,d}\int_{\R^d}V^{\gamma+\frac{d}2}\leq L^{(1)}_{\gamma,d}$ by~\eqref{eq:LT_N} with $N=1$. Since the supremum in~\eqref{eq:estimate_lambda_1_vanishing} is also bounded above by 1, we only have to prove the bound when it is small enough.

Let $0\leq \chi\in C^\ii_c(\R^d)$ be a function supported in the unit ball centered at the origin $B_1:=B_1(0)$ with $\int_{\R^d}\chi^2=1$. Denote $\chi_{R,y}(x):=R^{-d/2}\chi((x-y)/R)$ its translation by a vector $y\in\R^d$ and dilation by $R\geq1$. Then we have the continuous partition of unity
$$\int_{\R^d}\chi_{R,y}(x)^2\,\rd y=1,\qquad\forall x\in\R^d$$
as well as the IMS formula
\begin{equation}
 \int_{\R^d}\rd y\int_{\R^d}|\nabla(\chi_{R,y}u)|^2=\int_{\R^d}|\nabla u|^2+R^{-2}\int_{\R^d}|\nabla\chi|^2\int_{\R^d}|u|^2.
 \label{eq:IMS_kinetic_continuous}
\end{equation}
Next, we write for $u\in H^1(\R^d)$
\begin{align*}
\int_{\R^d}V|u|^2&=\int_{\R^d}\rd y\int_{\R^d}V|\chi_{R,y}u|^2\\
&\leq \int_{\R^d}\rd y\int_{\R^d}|\nabla(\chi_{R,y}u)|^2+(L^{(1)}_{\gamma,d})^{\frac1\gamma}\left(\int_{B_R(y)}V^{\gamma+\frac{d}2}\right)^{\frac1\gamma}\int_{\R^d}|\chi_{R,y}u|^2\\
&\leq  \int_{\R^d}|\nabla u|^2+C\left(\sup_{y\in\R^d}\norm{V}_{L^{\gamma+\frac{d}2}(B_R(y))}^{\frac{\gamma+d/2}{\gamma}}+R^{-2}\right)\int_{\R^d}|u|^2\\
&\leq  \int_{\R^d}|\nabla u|^2+C\left(R^{\frac{d}\gamma}\sup_{y\in\R^d}\norm{V}_{L^{\gamma+\frac{d}2}(B_1(y))}^{\frac{\gamma+d/2}{\gamma}}+R^{-2}\right)\int_{\R^d}|u|^2.
\end{align*}
In the second line we used~\eqref{eq:LT_N} for $N=1$ and in the third line we used~\eqref{eq:IMS_kinetic_continuous}. Finally, in the last line we used that any ball of radius $R\geq1$ can be covered by $CR^d$ balls of radius 1.
By the variational principle, we obtain
$$\lambda_1(-\Delta-V)\geq -C\left(R^{\frac{d}\gamma}\sup_{y\in\R^d}\norm{V}_{L^{\gamma+\frac{d}2}(B_1(y))}^{\frac{\gamma+d/2}{\gamma}}+R^{-2}\right).$$
The result follows after optimizing over $R\geq1$.
\end{proof}

\subsubsection*{Step 2. Extracting the first bubble}
From~\eqref{eq:vanishing} we can find a sequence $x^{(1)}_n\in \Z^d$ such that
$$\liminf_{n\to\ii}\int_{B_1(x_n^{(1)})}V_n^{\gamma+\frac{d}2}>0.$$
This leads us to consider the new potential $V_n(x_n^{(1)}+\cdot)$ which will weakly converge to the first bubble $V^{(1)}$ after extraction of a subsequence. Since our problem is invariant under space translations, we will for simplicity of notation assume that $x_n^{(1)}\equiv0$ so that
$$\liminf_{n\to\ii}\int_{B_1}V_n^{\gamma+\frac{d}2}>0.$$
We then consider the asymptotic mass
$$\boxed{\alpha^{(1)}:=\lim_{R\to\ii}\liminf_{n\to\ii}\int_{B_R}V_n^{\gamma+\frac{d}2}\in(0,1].}$$
If $\alpha^{(1)}=1$ then the sequence $V_n$ is tight in $L^{\gamma+d/2}(\R^d)$ and we can immediately go to the next step. In this step we \textbf{assume} that
$$\alpha^{(1)}<1$$
and explain how to extract from $V_n$ a tight piece of mass $\alpha^{(1)}$ and split the eigenvalue sum into two independent pieces. Using Levy concentration functions as in~\cite{Lions-84}, we can find a sequence $R_n\to\ii$ such that, after extraction of a subsequence,
\begin{multline}
\lim_{n\to\ii}\int_{B_{R_n}}V_n^{\gamma+\frac{d}2}=\alpha^{(1)},\qquad \lim_{n\to\ii}\int_{B_{2R_n}\setminus B_{R_n}}V_n^{\gamma+\frac{d}2}=0,\\ \lim_{n\to\ii}\int_{\R^d\setminus B_{2R_n}}V_n^{\gamma+\frac{d}2}=1-\alpha^{(1)}.
\label{eq:dichotomy}
\end{multline}
Then
\begin{equation}
\lim_{n\to\ii}\norm{V_n-V_n\1_{B_{R_n}}-V_n\1_{\R^d\setminus B_{2R_n}}}_{L^{\gamma+d/2}(\R^d)}=0
\label{eq:norm_decouples}
\end{equation}
and the sequence of localized potentials $(V_n\1_{B_{R_n}})$ is tight in $L^{\gamma+d/2}(\R^d)$.

Our main result in this step is that the eigenvalue sum decouples.

\begin{proposition}[Dichotomy]\label{prop:dichotomy_spectrum}
If $\alpha^{(1)}\in(0,1)$, there exists $M\in\{1,...,N-1\}$ such that
$$L^{(N)}_{\gamma,d}=L^{(M)}_{\gamma,d}=L^{(N-M)}_{\gamma,d}.$$
and
\begin{multline}
\sum_{j=1}^N|\lambda_j(-\Delta-V_n)|^\gamma=\sum_{j=1}^M\left|\lambda_j\big(-\Delta-V_n\1_{B_{R_n}}\big)\right|^\gamma\\+\sum_{j=1}^{N-M}\left|\lambda_j\big(-\Delta-V_n\1_{\R^d\setminus B_{2R_n}}\big)\right|^\gamma+o(1)_{n\to\ii},
\label{eq:dichotomy_spectrum}
\end{multline}
after extraction of a subsequence. The potentials $V_n\1_{B_{R_n}}$ and $V_n\1_{\R^d\setminus B_{2R_n}}$ are maximizing sequences for $L^{(M)}_{\gamma,d}$ and $L^{(N-M)}_{\gamma,d}$, respectively.
\end{proposition}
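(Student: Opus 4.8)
The plan is to combine the geometric decoupling of the potential~\eqref{eq:norm_decouples} with a localization of the eigenfunctions of $-\Delta-V_n$ into the regions $B_{R_n}$ and $\R^d\setminus B_{2R_n}$. Denote $V_n^{\rm in}:=V_n\1_{B_{R_n}}$ and $V_n^{\rm out}:=V_n\1_{\R^d\setminus B_{2R_n}}$. Since $R_n\to\ii$ and the mass in the annulus $B_{2R_n}\setminus B_{R_n}$ tends to $0$ by~\eqref{eq:dichotomy}, the vanishing estimate~\eqref{eq:estimate_lambda_1_vanishing} applied to $V_n$ restricted to that annulus shows that $-\Delta$ on $\R^d$ perturbed by $V_n\1_{B_{2R_n}\setminus B_{R_n}}$ has first eigenvalue tending to $0$; in particular the ``middle'' potential contributes nothing to the spectrum in the limit. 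The two remaining operators $-\Delta-V_n^{\rm in}$ and $-\Delta-V_n^{\rm out}$ act, morally, on disjoint regions separated by an annulus of width $R_n\to\ii$.

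First I would prove the upper bound in~\eqref{eq:dichotomy_spectrum}, i.e. that
$$\sum_{j=1}^N|\lambda_j(-\Delta-V_n)|^\gamma\leq \sum_{j=1}^M|\lambda_j(-\Delta-V_n^{\rm in})|^\gamma+\sum_{j=1}^{N-M}|\lambda_j(-\Delta-V_n^{\rm out})|^\gamma+o(1)$$
for a suitable splitting $M+(N-M)=N$. For this, take $u_{1,n},\dots,u_{N,n}$ an orthonormal family realizing the $N$ lowest min-max values of $-\Delta-V_n$, and use an IMS-type partition of unity $\chi_n^2+\eta_n^2=1$ with $\chi_n$ supported in $B_{2R_n}$, $\eta_n$ supported outside $B_{R_n}$, and $|\nabla\chi_n|+|\nabla\eta_n|=O(1/R_n)$. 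Localizing the quadratic form and using $\supp V_n^{\rm in}\subset B_{R_n}$, $\supp V_n^{\rm out}\subset \R^d\setminus B_{2R_n}$, one gets that each trial function $\chi_n u_{j,n}$ feels only $V_n^{\rm in}$ (up to an $O(R_n^{-2})$ error in the kinetic energy and the annulus contribution which is $o(1)$ by the vanishing estimate), and similarly $\eta_n u_{j,n}$ feels only $V_n^{\rm out}$. The $2N$-dimensional space spanned by $\{\chi_n u_{j,n},\eta_n u_{j,n}\}$, after a Gram--Schmidt correction that is $o(1)$ in norm, then provides $N$ trial functions split between the two operators; choosing the optimal distribution of the negative directions among the two gives the claimed upper bound with some $M=M_n$, which we fix by passing to a subsequence. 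Since by~\eqref{eq:choice_V_n2} the left side converges to $L^{(N)}_{\gamma,d}$ and, by~\eqref{eq:LT_N}, $\sum_{j=1}^M|\lambda_j(-\Delta-V_n^{\rm in})|^\gamma\leq L^{(M)}_{\gamma,d}\int V_n^{\rm in\,\gamma+d/2}\to L^{(M)}_{\gamma,d}\,\alpha^{(1)}$ and likewise for the outer piece with mass $1-\alpha^{(1)}$, we obtain $L^{(N)}_{\gamma,d}\leq L^{(M)}_{\gamma,d}\alpha^{(1)}+L^{(N-M)}_{\gamma,d}(1-\alpha^{(1)})$.

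For the matching lower bound, I would conversely take orthonormal families $v_{1,n},\dots,v_{M,n}$ and $w_{1,n},\dots,w_{N-M,n}$ realizing the lowest min-max values of $-\Delta-V_n^{\rm in}$ and $-\Delta-V_n^{\rm out}$ respectively. Because these correspond to negative eigenvalues, the associated (near-)eigenfunctions decay exponentially away from the support of the potential (a standard Agmon/Combes--Thomas argument, using that the relevant min-max levels stay bounded away from $0$ — here one may need to first argue that in the limit none of the counted eigenvalues escapes to $0$, otherwise it contributes $0$ to the sum anyway and can be dropped). Hence $v_{j,n}$ is exponentially small outside $B_{3R_n/2}$, $w_{k,n}$ is exponentially small inside $B_{3R_n/2}$, so after cutting off and re-orthonormalizing (again an $o(1)$ modification, exponentially small in $R_n$) the $N$ functions $\{v_{j,n},w_{k,n}\}$ are almost orthonormal in $H^1(\R^d)$ and $\langle v_{j,n},(-\Delta-V_n)v_{j,n}\rangle=\lambda_j(-\Delta-V_n^{\rm in})+o(1)$ since $V_n$ and $V_n^{\rm in}$ agree on $B_{R_n}$ and $v_{j,n}$ is concentrated there; similarly for $w_{k,n}$. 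The min-max principle then yields $\sum_{j=1}^N|\lambda_j(-\Delta-V_n)|^\gamma\geq \sum_{j=1}^M|\lambda_j(-\Delta-V_n^{\rm in})|^\gamma+\sum_{j=1}^{N-M}|\lambda_j(-\Delta-V_n^{\rm out})|^\gamma+o(1)$, which combined with the upper bound gives the asymptotic equality~\eqref{eq:dichotomy_spectrum}. Now $(V_n^{\rm in})$, being tight with $\int V_n^{\rm in\,\gamma+d/2}\to\alpha^{(1)}$, can be rescaled to have unit mass; since it tests $L^{(M)}_{\gamma,d}$ we get $\limsup \sum_{j=1}^M|\lambda_j(-\Delta-V_n^{\rm in})|^\gamma\leq L^{(M)}_{\gamma,d}\alpha^{(1)}$, and likewise $\leq L^{(N-M)}_{\gamma,d}(1-\alpha^{(1)})$ for the outer piece. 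Plugging into~\eqref{eq:dichotomy_spectrum} with the left side $\to L^{(N)}_{\gamma,d}$ and using $L^{(N)}_{\gamma,d}\geq L^{(M)}_{\gamma,d},L^{(N-M)}_{\gamma,d}$ (monotonicity) together with $\alpha^{(1)}\in(0,1)$ forces $L^{(N)}_{\gamma,d}=L^{(M)}_{\gamma,d}=L^{(N-M)}_{\gamma,d}$, and then each inequality above is saturated, so $(V_n^{\rm in})$ and $(V_n^{\rm out})$ (suitably renormalized) are maximizing sequences for $L^{(M)}_{\gamma,d}$ and $L^{(N-M)}_{\gamma,d}$. Finally $M\in\{1,\dots,N-1\}$ because $\alpha^{(1)}>0$ forces $M\geq1$ (the inner piece carries a definite amount of potential, hence by absence of vanishing a negative eigenvalue — invoke Step~1 applied to $V_n^{\rm in}$) and symmetrically $N-M\geq1$.

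The main obstacle I anticipate is the decoupling of the eigenvalue sum, i.e. controlling the cross terms: one must ensure that the near-eigenfunctions of $-\Delta-V_n$ do not have significant $H^1$ mass in the separating annulus and that the truncated trial functions remain almost orthonormal, both of which rely on the exponential decay of eigenfunctions associated to min-max levels bounded away from $0$. The subtlety is uniformity in $n$: the decay rate is governed by $|\lambda_{j,n}|$, which could degenerate if some $\Lambda_j=0$. This is handled by separating the indices $j$ with $\Lambda_j<0$ (genuine exponential decay, uniform over the subsequence) from those with $\Lambda_j=0$ (which contribute $o(1)$ to the $\gamma$-sum and can simply be discarded), but making this bookkeeping rigorous — and matching the indices on both sides consistently — is the technical heart of the argument.
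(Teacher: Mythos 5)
Your overall strategy coincides with the paper's: an IMS localization to decouple the spectrum into the two pieces $V_n\1_{B_{R_n}}$ and $V_n\1_{\R^d\setminus B_{2R_n}}$ (this is stated as Lemma~\ref{lem:spectrum_decouples} in the paper), followed by Lieb--Thirring applied to each piece and monotonicity of $N\mapsto L^{(N)}_{\gamma,d}$ to force the equalities. The one place where you deviate substantively is the reverse direction of the spectral decoupling, your ``matching lower bound,'' where you invoke pointwise Agmon/Combes--Thomas exponential decay of the eigenfunctions of $-\Delta-V_n^{\rm in}$ and $-\Delta-V_n^{\rm out}$. The paper uses something considerably softer: multiply the eigenvalue equation $(-\Delta-W_n\1_{B_{R_n}})u_{j,n}=\lambda_{j,n}u_{j,n}$ by $\Xi_n^2 u_{j,n}$ with $\Xi_n$ a cutoff living outside $B_{5R_n/4}$, integrate by parts, and use only $\lambda_{j,n}\to\Lambda_j<0$ to conclude $\|\Xi_n u_{j,n}\|_{H^1}\to0$, with no claim on the rate. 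This sidesteps exactly the uniformity worry you flag as ``the technical heart'' of your proposal (the decay rate governed by $|\lambda_{j,n}|$ could degenerate): nothing beyond negativity of the limit $\Lambda_j$ is needed, and, as you correctly observe, indices with $\Lambda_j=0$ contribute $o(1)$ to the Riesz sum and can be dropped. The moral is that $H^1$ smallness of the localized tails suffices, and it is available with a short integration by parts, no Agmon machinery required.

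Two smaller remarks. First, with your two-cutoff partition ($\chi_n$ supported in $B_{2R_n}$, $\eta_n$ outside $B_{R_n}$), the trial function $\chi_n u_{j,n}$ does feel the annulus piece $V_n\1_{B_{2R_n}\setminus B_{R_n}}$, and the ``vanishing estimate''~\eqref{eq:estimate_lambda_1_vanishing} controls $\lambda_1$ rather than the quadratic form $\int V_n\1_{\rm ann}|\chi_n u_{j,n}|^2$ directly; what one really needs is to absorb a small fraction $\eps_n$ of the kinetic energy against the annulus potential whose $L^{\gamma+d/2}$ norm tends to zero, via Gagliardo--Nirenberg--Sobolev. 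This is precisely what the paper's three-cutoff $\eps_n$-splitting accomplishes, and is worth making explicit. Second, for $M\in\{1,\dots,N-1\}$ there is a cleaner route than your ``absence of vanishing applied to $V_n^{\rm in}$'' argument: substituting $M=0$ (or $M=N$) into $L^{(N)}_{\gamma,d}\leq L^{(M)}_{\gamma,d}\alpha^{(1)}+L^{(N-M)}_{\gamma,d}(1-\alpha^{(1)})$ immediately gives $L^{(N)}_{\gamma,d}\alpha^{(1)}\leq 0$, which contradicts $L^{(N)}_{\gamma,d}\geq L^{(1)}_{\gamma,d}>0$.
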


The proof of Proposition~\ref{prop:dichotomy_spectrum} relies on the following lemma, which is an adaptation of well-known results~\cite{Morgan-79,MorSim-80} to the case of $n$-dependent potentials and which we state as an independent result for convenience.

\begin{lemma}[Spectrum in the case of dichotomy]\label{lem:spectrum_decouples}
Assume that $\gamma>1/2$ if $d=1$ and $\gamma>0$ if $d\geq2$.  Let $(W_n)$ be a bounded sequence in $L^{\gamma+d/2}(\R^d,\R_+)$ so that there exists a sequence $R_n\to\ii$ with
\begin{equation}
 \lim_{n\to\ii}\int_{B_{2R_n}\setminus B_{R_n}}W_n^{\gamma+d/2}=0.
 \label{eq:splitting_V_n}
\end{equation}
Then, for every $N\in\N$, there exists $M\in \{0,...,N\}$ so that
\begin{multline}
\sum_{j=1}^N|\lambda_j(-\Delta-W_n)|^\gamma=\sum_{j=1}^M\left|\lambda_j\big(-\Delta-W_n\1_{B_{R_n}}\big)\right|^\gamma\\+\sum_{j=1}^{N-M}\left|\lambda_j\big(-\Delta-W_n\1_{\R^d\setminus B_{2R_n}}\big)\right|^\gamma+o(1)_{n\to\ii}
\label{eq:dichotomy_spectrum2}
\end{multline}
after extraction of a (not displayed) subsequence.
\end{lemma}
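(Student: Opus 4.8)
The plan is to use the classical IMS localization in the ``double'' form that splits the energy into the part living inside $B_{R_n}$ and the part living outside $B_{2R_n}$, with a negligible error coming from the annulus where $W_n$ is small. Fix smooth cutoffs $\chi_n$ supported in $B_{(3/2)R_n}$, equal to $1$ on $B_{R_n}$, and $\eta_n=\sqrt{1-\chi_n^2}$ supported outside $B_{R_n}$ and equal to $1$ outside $B_{2R_n}$, with $|\nabla\chi_n|^2+|\nabla\eta_n|^2\le C R_n^{-2}\to0$. The IMS formula gives, for any $u\in H^1$,
\[
\pscal{u,(-\Delta-W_n)u}=\pscal{\chi_n u,(-\Delta-W_n)\chi_n u}+\pscal{\eta_n u,(-\Delta-W_n)\eta_n u}-R_n^{-2}\,O(\norm{u}_{L^2}^2).
\]
On the support of $\chi_n$ we may replace $W_n$ by $W_n\1_{B_{R_n}}$ up to an error controlled by $\norm{W_n}_{L^{\gamma+d/2}(B_{(3/2)R_n}\setminus B_{R_n})}$, which tends to $0$ by hypothesis~\eqref{eq:splitting_V_n}; we use here that $\gamma+d/2>1$ (that is, $\gamma>1/2$ for $d=1$, $\gamma>0$ for $d\ge2$) together with the Sobolev/Gagliardo--Nirenberg inequality so that a small $L^{\gamma+d/2}$-norm of the potential only contributes a small relative form perturbation on $H^1$. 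Similarly $W_n$ may be replaced by $W_n\1_{\R^d\setminus B_{2R_n}}$ on the support of $\eta_n$. This shows that $-\Delta-W_n$ is, in the sense of quadratic forms and up to $o(1)$ errors, bounded below by the direct sum $(-\Delta-W_n\1_{B_{R_n}})\oplus(-\Delta-W_n\1_{\R^d\setminus B_{2R_n}})$ acting on the orthogonal decomposition $\chi_n u\oplus\eta_n u$; a symmetric argument (gluing eigenfunctions of the two pieces, which have asymptotically disjoint supports) gives the reverse form bound. Combining the two, the $j$-th min-max level of $-\Delta-W_n$ converges to the $j$-th value obtained by merging and reordering the two sequences $(\lambda_k(-\Delta-W_n\1_{B_{R_n}}))_k$ and $(\lambda_k(-\Delta-W_n\1_{\R^d\setminus B_{2R_n}}))_k$.

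Next I would pass to the limit. Since $(W_n)$ is bounded in $L^{\gamma+d/2}$, the first $N$ min-max levels of each of the three operators are bounded, so after extracting a subsequence all of them converge. For each fixed $j\in\{1,\dots,N\}$ the $j$-th level of $-\Delta-W_n$ is, for $n$ large, either approximately the $m$-th level of the inside piece or approximately the $(j-m)$-th level of the outside piece, for some $m=m(j,n)$; the point is that $m(j,n)$ stabilizes to a value depending only on $j$ after a further extraction (there are finitely many possibilities). Taking $j$ through $1,\dots,N$ this produces a single integer $M\in\{0,\dots,N\}$ such that among the $N$ lowest levels of $-\Delta-W_n$, exactly the $M$ lowest come (asymptotically) from the inside piece and the $N-M$ lowest from the outside piece; summing the $\gamma$-th powers yields exactly~\eqref{eq:dichotomy_spectrum2}. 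Here one uses that $t\mapsto|t|^\gamma$ is continuous to pass the convergence of each level through, and that degeneracies at the threshold $0$ cause no problem because $|0|^\gamma=0$: if some level of one piece converges to $0$ it contributes nothing, consistently with either choice of $M$.

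The main obstacle, and the step deserving the most care, is the form-perturbation estimate that lets one discard the annular part of $W_n$ and localize the potential: one needs that for $W\ge0$ with small $L^{\gamma+d/2}$-norm on a region $\Omega$, the form $\int_\Omega W|u|^2$ is bounded by $\eps(\Omega)\int|\nabla u|^2+C(\eps(\Omega))\int|u|^2$ with $\eps(\Omega)\to0$ as $\norm{W}_{L^{\gamma+d/2}(\Omega)}\to0$, uniformly. This is exactly where the subcriticality $\gamma+d/2>1$ enters (in $d=1$ it forces $\gamma>1/2$, matching the hypothesis), via Hölder and the Sobolev embedding $H^1\hookrightarrow L^{2(\gamma+d/2)/(\gamma+d/2-1)}$; at $\gamma+d/2=1$ (i.e.\ $d=1$, $\gamma=1/2$) the embedding fails and the argument breaks, as it must. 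A secondary subtlety is the IMS error term $R_n^{-2}\norm{u}_{L^2}^2$: on the subspaces where the min-max is attained the relevant trial functions have $L^2$-norm comparable to the number of functions, which is at most $N$, so this error is genuinely $o(1)$; one should state the min-max comparison carefully (Lemma-style) to make this uniform in $n$.
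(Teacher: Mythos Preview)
Your approach is essentially the same as the paper's: IMS localization for the lower bound, trial functions for the upper bound, then extraction to stabilize $M$. The paper uses three cutoffs $\chi,\xi,\zeta$ rather than your two, and sets aside an $\eps_n$-fraction of the full operator to absorb the annulus potential via the $N=1$ Lieb--Thirring inequality, but this is equivalent to your relative form bound and the difference is cosmetic. Your identification of the subcriticality $\gamma+d/2>1$ as the mechanism making the annulus contribution a small form perturbation is exactly right.

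The one place you are too quick is the upper bound. The eigenfunctions of $-\Delta-W_n\1_{B_{R_n}}$ and of $-\Delta-W_n\1_{\R^d\setminus B_{2R_n}}$ are defined on all of $\R^d$; they do not have ``asymptotically disjoint supports'' for free, and without this their almost-orthogonality (needed to feed them jointly into the min-max) is not available. The paper handles this via a Dirichlet comparison: first $\lambda_j(-\Delta-W_n)\le \lambda_j(-\Delta-W_n\1_{B_{R_n}\cup B_{2R_n}^c})\le \lambda_j$ of the Dirichlet realization on the disjoint union $B_{5R_n/4}\cup B_{7R_n/4}^c$, which is a genuine direct sum. Then one must show that the Dirichlet eigenvalues on $B_{5R_n/4}$ agree with the whole-space eigenvalues of $-\Delta-W_n\1_{B_{R_n}}$ up to $o(1)$; this is the decay step you elided. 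The paper carries it out by multiplying the eigenvalue equation by the square of an exterior cutoff to show that any eigenfunction with eigenvalue bounded away from $0$ has $o(1)$ mass in $H^1$ outside $B_{5R_n/4}$ (eigenvalues tending to $0$ need no argument, as you note, since $|0|^\gamma=0$). You flag the annulus form-perturbation as ``the main obstacle'', but that is the easier half; the eigenfunction-decay/Dirichlet-comparison is where most of the actual work lies.
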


In the lemma we do not assume that $W_n$ is a maximizing sequence, nor that it is normalized. We also do not make any assumption about its mass in $B_{R_n}$ and outside of $B_{2R_n}$. We only assume that it is very small in the annulus $\{R_n\leq |x|\leq 2R_n\}$.  We postpone the proof of the lemma and first explain the

\begin{proof}[Proof of Proposition~\ref{prop:dichotomy_spectrum}]
Using the Lieb-Thirring inequality for the two potentials $V_n\1_{B_{R_n}}$ and $V_n\1_{\R^d\setminus B_{2R_n}}$, we obtain from~\eqref{eq:dichotomy_spectrum2} and~\eqref{eq:dichotomy}
\begin{align*}
L^{(N)}_{\gamma,d}&=\sum_{j=1}^{N}|\lambda_j(-\Delta-V_n)|^\gamma+o(1)\\
&\leq L^{(M)}_{\gamma,d}\alpha^{(1)}+ L^{(N-M)}_{\gamma,d}(1-\alpha^{(1)})+o(1)\\
&\leq L^{(N)}_{\gamma,d}+o(1)
\end{align*}
since $n\mapsto L^{(n)}_{\gamma,d}$ is non-decreasing.
After passing to the limit we find that there must be equality everywhere. Since $\alpha^{(1)}\in(0,1)$ this gives
$$L^{(M)}_{\gamma,d}=L^{(N-M)}_{\gamma,d}=L^{(N)}_{\gamma,d}.$$
In particular $M$ cannot be equal to 0 or $N$ since $L^{(0)}_{\gamma,d}=0$ and we know that $L^{(N)}_{\gamma,d}\geq L^{(1)}_{\gamma,d}>0$.
In addition, we see that $V_n\1_{B_{R_n}}$ and $V_n\1_{\R^d\setminus B_{2R_n}}$ are (not necessarily normalized) maximizing sequences for $L^{(M)}_{\gamma,d}$ and $L^{(N-M)}_{\gamma,d}$, respectively. This concludes the proof of Proposition~\ref{prop:dichotomy_spectrum}.
\end{proof}

We now provide the

\begin{proof}[Proof of Lemma~\ref{lem:spectrum_decouples}]
We prove that for any fixed $j$
\begin{equation}
\lambda_j(-\Delta-W_n)=\lambda_j\begin{pmatrix}
-\Delta-W_n\1_{B_{R_n}}&0\\
0&-\Delta-W_n\1_{\R^d\setminus B_{2R_n}}
\end{pmatrix}+o(1)_{n\to\ii}
\label{eq:spectrum_decouples}
\end{equation}
where the operator on the right side acts in the Hilbert space $L^2(\R^d)^2$. The spectrum of the latter is the union of the spectra of $-\Delta-W_n\1_{B_{R_n}}$ and $-\Delta-W_n\1_{B_{2R_n}^c}$, hence~\eqref{eq:spectrum_decouples} means that the negative spectrum of $-\Delta-W_n$ is asymptotically given by the union of these two spectra. The rest of the statement follows immediately and we only discuss the proof of~\eqref{eq:spectrum_decouples}.

In order to prove the lower bound in~\eqref{eq:spectrum_decouples} we localize in the corresponding regions of space. Let $\chi\in C^\ii_c(\R^d)$ be a radial function such that $0\leq \chi\leq1$, $\chi\equiv1$ on $B_1$ and $\chi\equiv0$ on $B_{5/4}^c$. Let $\zeta\in C^\ii(\R^d)$ be a radial function such that $0\leq \zeta\leq1$, $\zeta\equiv1$ on $B_2^c$ and $\zeta\equiv0$ on $B_{7/4}$. Finally, let $\xi=\sqrt{1-\chi^2-\zeta^2}$ so that we have the partition of unity $\chi^2+\xi^2+\zeta^2=1$. Let finally $\chi_n:=\chi(x/R_n)$ and a similar definition for $\zeta_n$ and $\xi_n$. The IMS localization formula tells us that
\begin{multline}
-\Delta-W_n=(1-\eps_n)\Big(\chi_n(-\Delta-W_n\1_{B_{R_n}})\chi_n+\zeta_n(-\Delta-W_n\1_{B_{2R_n}^c})\zeta_n+\xi_n(-\Delta)\xi_n\\
-|\nabla\chi_n|^2-|\nabla\zeta_n|^2-|\nabla\xi_n|^2\Big) \\
+\eps_n\Big(-\Delta-\eps_n^{-1}W_n\1_{B_{2R_n}\setminus B_{R_n}}-W_n\1_{B_{R_n}}-W_n\1_{B_{2R_n}^c}\Big)
\label{eq:IMS}
\end{multline}
for some $0<\eps_n<1$ to be chosen later.
From the Gagliardo-Nirenberg-Sobolev inequality, we have
$$\frac{-\Delta}2-\eps_n^{-1}W_n\1_{B_{2R_n}\setminus B_{R_n}}\geq -2^{\gamma+\frac{d}2-1}\eps_n^{-\gamma-d/2}L^{(1)}_{\gamma,d}\int_{B_{2R_n}\setminus B_{R_n}}W_n^{\gamma+d/2}$$
and
$$\frac{-\Delta}2-W_n\1_{B_{R_n}}-W_n\1_{B_{2R_n}^c}\geq -2^{\gamma+\frac{d}2-1}L^{(1)}_{\gamma,d}\int_{\R^d}W_n^{\gamma+d/2}.$$
Thus we get the smallest error, of order $\eps_n$, if we choose
$$\eps_n:=\left(\int_{B_{2R_n}\setminus B_{R_n}}W_n^{\gamma+d/2}\right)^{\frac{1}{\gamma+\frac{d}2}}\underset{n\to\ii}\longrightarrow 0.$$
Using $\xi_n(-\Delta)\xi_n\geq0$ we obtain the operator inequality
\begin{multline}
-\Delta-W_n\geq (1-\eps_n)\Big(\chi_n(-\Delta-W_n\1_{B_{R_n}})\chi_n+\zeta_n(-\Delta-W_n\1_{B_{2R_n}^c})\zeta_n\Big)\\-C(R_n^{-2}+\eps_n).
\label{eq:IMS_lower_bd}
\end{multline}
Introducing the partial isometry
$$\cU_n:\phi\in L^2(\R^d)\mapsto \begin{pmatrix}
\chi_n\phi\\ \zeta_n\phi\\ \xi_n\phi\end{pmatrix}\in L^2(\R^d)^3,$$
the inequality~\eqref{eq:IMS_lower_bd} can be rephrased in the form
\begin{multline*}
-\Delta-W_n\geq (1-\eps_n)\mathcal{U}_n^{-1}\begin{pmatrix}
-\Delta-W_n\1_{B_{R_n}}&&\\
&-\Delta-W_n\1_{B_{2R_n}^c}&\\
&&0
\end{pmatrix}\cU_n\\
-C(R_n^{-2}+\eps_n).
\end{multline*}
Thus we obtain from the min-max principle that
\begin{align*}
\lambda_j(-\Delta-W_n)&\geq \lambda_j\begin{pmatrix}
-\Delta-W_n\1_{B_{R_n}}&&\\
&-\Delta-W_n\1_{B_{2R_n}^c}&\\
&&0
\end{pmatrix}-C(R_n^{-2}+\eps_n)\\
&= \lambda_j\begin{pmatrix}
-\Delta-W_n\1_{B_{R_n}}&0\\
0&-\Delta-W_n\1_{B_{2R_n}^c}
\end{pmatrix}-C(R_n^{-2}+\eps_n).
\end{align*}
We have dropped the coefficient $1-\eps_n$ since the $\lambda_j$ are all non-positive.

To show the similar reverse bound we prove first that for all $j$,
\begin{equation}
\lambda_j(-\Delta-W_n\1_{B_{R_n}})_{|B_{5R_n/4}}= \lambda_j(-\Delta-W_n\1_{B_{R_n}})+o(1)_{n\to\ii}
\label{eq:compare_Dirichlet}
\end{equation}
where the notation on the left side means that we restrict the operator to the ball of radius $5R_n/4$ with Dirichlet  boundary condition. After extracting a subsequence we may assume that
$$\lim_{n\to\ii}\lambda_j(-\Delta-W_n\1_{B_{R_n}})_{|B_{5R_n/4}}=\Lambda_j^D,\qquad \lim_{n\to\ii}\lambda_j(-\Delta-W_n\1_{B_{R_n}})=\Lambda_j.$$
The bound
\begin{equation}
 \lambda_j(-\Delta-W_n\1_{B_{R_n}})\leq \lambda_j(-\Delta-W_n\1_{B_{R_n}})_{|B_{5R_n/4}}
 \label{eq:compare_Dirichlet_upper}
\end{equation}
follows from the min-max principle and implies immediately that
$$\Lambda_j\leq \Lambda_j^D.$$
We thus only have to prove the reverse inequality $\Lambda_j\geq \Lambda_j^D$. Using the space spanned by the $j$ first eigenfunctions of the Dirichlet Laplacian on the annulus $B_{5R_n/4}\setminus B_{R_n}$ as trial functions, we obtain after scaling that
$$\lambda_j(-\Delta-W_n\1_{B_{R_n}})_{|B_{5R_n/4}}\leq \frac{C_j}{R_n^2}.$$
This shows that $\Lambda_j^D\leq0$ for all $j$. In particular, the inequality $\Lambda_j\geq \Lambda_j^D$ follows whenever $\Lambda_j=0$.

Let us now consider a $\Lambda_j<0$ and prove that $\Lambda_j\geq\Lambda_j^D$. Take a sequence of $L^2$-normalized eigenfunctions $u_{j,n}$ of $-\Delta-W_n\1_{B_{R_n}}$ associated with an eigenvalue $\lambda_{j,n}\to\Lambda_j<0$. We call $\Xi_n:=\sqrt{\xi_n^2+\zeta_n^2}$ the localization function outside of $B_{5R_n/4}$. Multiplying the equation by $\Xi_n^2$ and integrating, using that $\Xi_n$ is real, we obtain
\begin{align*}
0&=\int_{\R^d}\nabla \overline{u_{j,n}}\cdot\nabla \left(\Xi_n^2u_{j,n}\right)+|\lambda_{j,n}|\int_{\R^d}\Xi_n^2|u_{j,n}|^2\\
&=\int_{\R^d}|\nabla (\Xi_nu_{j,n})|^2+|\lambda_{j,n}|\int_{\R^d}\Xi_n^2|u_{j,n}|^2-\int_{\R^d}u_{j,n}^2|\nabla \Xi_n|^2.
\end{align*}
This proves that
$$\int_{\R^d}|\nabla (\Xi_nu_{j,n})|^2+|\Lambda_{j}|\int_{\R^d}\Xi_n^2|u_{j,n}|^2\leq |\Lambda_j-\lambda_{j,n}|+\frac{C}{R_n^2}$$
and thus $\Xi_nu_{j,n}\to0$ strongly in $H^1(\R^d)$. Hence we may replace $u_{j,n}$ by the function $v_{j,n}:=(1-\Xi_n)u_{j,n}$ at the expense of small errors both in the energy and the $L^2$ norm. We use this argument for all the $j$ first eigenfunctions and obtain functions $v_{1,n},...,v_{j,n}$ which are almost orthogonal to each other. Applying then the min-max theorem on the space spanned by these functions, we obtain after passing to the limit the desired inequality $\Lambda_j\geq\Lambda_j^D$ and this concludes our proof of~\eqref{eq:compare_Dirichlet}.

The same argument shows that
$$\lambda_j(-\Delta-W_n\1_{B_{2R_n}^c})_{|B_{7R_n/4}^c}= \lambda_j(-\Delta-W_n\1_{B_{2R_n}^c})+o(1)_{n\to\ii}$$ and we then immediately deduce from the min-max principle that
\begin{align*}
\lambda_j(-\Delta-W_n)&\leq \lambda_j(-\Delta-W_n\1_{B_{R_n}\cup B_{2R_n}^c})\\
&\leq \lambda_j(-\Delta-W_n\1_{B_{R_n}\cup B_{2R_n}^c})_{|B_{5R_n/4}\cup B_{7R_n/4}^c} \\
&=\lambda_j\begin{pmatrix}
(-\Delta-W_n\1_{B_{R_n}})_{|B_{5R_n/4}}&0\\
0&(-\Delta-W_n\1_{B_{2R_n}^c})_{|B_{7R_n/4}^c}
\end{pmatrix}\\
&=\lambda_j\begin{pmatrix}
-\Delta-W_n\1_{B_{R_n}}&0\\
0&-\Delta-W_n\1_{B_{2R_n}^c}
\end{pmatrix}+o(1)_{n\to\ii}.
\end{align*}
The first line follows because $0\leq W_n\1_{B_{R_n}\cup B_{2R_n}^c}\leq W_n$ and the second line follows like in~\eqref{eq:compare_Dirichlet_upper}. This concludes our proof of~\eqref{eq:spectrum_decouples}, hence of Lemma~\ref{lem:spectrum_decouples}.
\end{proof}

\subsubsection*{Step 3. Convergence in the tight case}

At this step we have two possibilities. Either $\alpha^{(1)}=1$ and then $V_n$ is tight, or $0<\alpha^{(1)}<1$ and then $V_n\1_{B_{R_n}}$ is a tight maximizing sequence for $L^{(M)}_{\gamma,d}$ for some $M\in\{1,...,N-1\}$. To cover these two cases at once, in this step we study the situation when $L^{(N)}_{\gamma,d}$ admits a tight maximizing sequence.

\begin{proposition}[Tight maximizing sequences]\label{prop:tight}
Assume that $L^{(N)}_{\gamma,d}$ admits a non-negative maximizing sequence $V_n\in L^{\gamma+d/2}(\R^d)$ with
$$\lim_{n\to\ii}\int_{\R^d}V_n(x)^{\gamma+d/2}\,\rd x=\alpha\in(0,\ii),\qquad \lim_{n\to\ii}\sum_{j=1}^N|\lambda_j(-\Delta-V_n)|^\gamma=L^{(N)}_{\gamma,d}\alpha$$
which converges weakly to some $V\in L^{\gamma+\frac{d}2}(\R^d)$ and is tight:
\begin{equation}
 \lim_{R\to\ii}\limsup_{n\to\ii}\int_{\R^d\setminus B_R}V_n^{\gamma+\frac{d}2}=0.
 \label{eq:tight}
\end{equation}
Then $V_n\to V$ strongly and $V$ is an optimizer for $L^{(N)}_{\gamma,d}$.
\end{proposition}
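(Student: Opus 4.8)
The plan is to show that the weak limit $V$ retains all of the low‑lying spectral mass of the sequence, and then to close the argument by feeding this into the Lieb--Thirring inequality for $V$ together with weak lower semicontinuity of the $L^{\gamma+d/2}$ norm. The crux is a \emph{local compactness} statement: along a tight weakly convergent sequence $V_n\rightharpoonup V$ in $L^{\gamma+d/2}(\R^d)$, the eigenfunctions associated with the negative eigenvalues of $-\Delta-V_n$ converge \emph{strongly} in $H^1(\R^d)$, and their limits are genuine eigenfunctions of $-\Delta-V$ with the limiting eigenvalues. Sub‑criticality enters only through the strict inequality $2(\gamma+\tfrac d2)'<2^*$, which holds precisely because $\gamma>0$ (resp. $\gamma>1/2$ if $d=1$) and which provides compact embeddings $H^1(B_R)\hookrightarrow L^{2(\gamma+d/2)'}(B_R)$ with a margin to spare.

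First I would set up the a priori bounds. The min--max levels $\lambda_{j,n}:=\lambda_j(-\Delta-V_n)$ lie in $[\lambda_{1,n},0]$, and $|\lambda_{1,n}|^\gamma\le L^{(1)}_{\gamma,d}\int V_n^{\gamma+d/2}$ is bounded, so after extracting a subsequence $\lambda_{j,n}\to\Lambda_j\le0$ for $j=1,\dots,N$, with $\Lambda_1<0$ since the eigenvalue sum tends to $L^{(N)}_{\gamma,d}\alpha>0$. Let $M'\in\{1,\dots,N\}$ be the number of indices with $\Lambda_j<0$. For $j\le M'$ and $n$ large, $\lambda_{j,n}$ is a genuine eigenvalue strictly below the essential spectrum, and I choose an $L^2$‑orthonormal family $u_{1,n},\dots,u_{M',n}$ of associated eigenfunctions. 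Testing $(-\Delta-V_n)u_{j,n}=\lambda_{j,n}u_{j,n}$ against $u_{j,n}$, bounding $\int V_n|u_{j,n}|^2\le\|V_n\|_{L^{\gamma+d/2}}\|u_{j,n}\|_{L^{2(\gamma+d/2)'}}^2$ by Gagliardo--Nirenberg and absorbing the gradient with Young's inequality (legitimate since the interpolation exponent is $<1$, again by $\gamma>0$), one gets $\|u_{j,n}\|_{H^1}\le C$; so along a further subsequence $u_{j,n}\rightharpoonup u_j$ weakly in $H^1(\R^d)$ for each $j\le M'$.

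The central step is the strong convergence $u_{j,n}\to u_j$ in $H^1(\R^d)$. Rewriting the equation as $u_{j,n}=(-\Delta-\lambda_{j,n})^{-1}(V_nu_{j,n})$ — with $\lambda_{j,n}$ staying in a compact subset of $(-\infty,0)$, so that the resolvents converge in operator norm to $(-\Delta-\Lambda_j)^{-1}$ — it suffices to prove $V_nu_{j,n}\to Vu_j$ strongly in $H^{-1}(\R^d)$. For a test function $\phi$ in the unit ball of $H^1$ I would split $\int(V_nu_{j,n}-Vu_j)\phi$ over $B_R$ and $B_R^c$ and over $V_n-V$ and $u_{j,n}-u_j$. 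On $B_R$: $V_n-V\rightharpoonup 0$ in $L^{\gamma+d/2}(B_R)$ pairs to $0$ uniformly against the precompact family $\{u_j\phi|_{B_R}:\|\phi\|_{H^1}\le1\}$ (precompact because multiplication by the fixed function $u_j\in L^{2(\gamma+d/2)'}(B_R)$, composed with the compact embedding $H^1(B_R)\hookrightarrow L^{2(\gamma+d/2)'}(B_R)$, is compact into $L^{(\gamma+d/2)'}(B_R)$), while $u_{j,n}-u_j\to0$ strongly in $L^{2(\gamma+d/2)'}(B_R)$ by Rellich; on $B_R^c$: all contributions are controlled by $\|V_n\|_{L^{\gamma+d/2}(B_R^c)}$ (small uniformly in $n$ for $R$ large, by the tightness hypothesis~\eqref{eq:tight}) and by $\|u_j\|_{L^{2(\gamma+d/2)'}(B_R^c)}\to0$. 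This gives $V_nu_{j,n}\to Vu_j$ in $H^{-1}$, hence $u_{j,n}\to u_j$ strongly in $H^1$; passing to the limit yields $(-\Delta-V)u_j=\Lambda_j u_j$, with $\|u_j\|_{L^2}=\lim\|u_{j,n}\|_{L^2}=1$ and $\langle u_j,u_k\rangle=\delta_{jk}$. Thus each $\Lambda_j<0$ is an eigenvalue of $-\Delta-V$, and since the quadratic form of $-\Delta-V$ restricted to $\mathrm{span}(u_1,\dots,u_j)$ is diagonal with entries $\Lambda_1\le\dots\le\Lambda_j$, the min--max principle gives $\lambda_j(-\Delta-V)\le\Lambda_j$ for $j\le M'$, hence $|\lambda_j(-\Delta-V)|\ge|\Lambda_j|$. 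Since $|\Lambda_j|=0$ for $j>M'$, summing gives
\[
\sum_{j=1}^N|\lambda_j(-\Delta-V)|^\gamma\ \ge\ \sum_{j=1}^N|\Lambda_j|^\gamma\ =\ \lim_{n\to\ii}\sum_{j=1}^N|\lambda_{j,n}|^\gamma\ =\ L^{(N)}_{\gamma,d}\,\alpha .
\]

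Finally I would close by a sandwich. The displayed quantity is positive, so $V\neq0$, and the finite‑rank Lieb--Thirring inequality~\eqref{eq:LT_N} for $V$ gives $\sum_{j=1}^N|\lambda_j(-\Delta-V)|^\gamma\le L^{(N)}_{\gamma,d}\int_{\R^d}V^{\gamma+d/2}$; moreover weak lower semicontinuity of the norm gives $\int_{\R^d}V^{\gamma+d/2}\le\liminf_n\int_{\R^d}V_n^{\gamma+d/2}=\alpha$. Combining the three inequalities,
\[
L^{(N)}_{\gamma,d}\,\alpha\ \le\ \sum_{j=1}^N|\lambda_j(-\Delta-V)|^\gamma\ \le\ L^{(N)}_{\gamma,d}\int_{\R^d}V^{\gamma+d/2}\ \le\ L^{(N)}_{\gamma,d}\,\alpha ,
\]
so equality holds throughout. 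Hence $\int_{\R^d}V^{\gamma+d/2}=\alpha=\lim_n\int_{\R^d}V_n^{\gamma+d/2}$; as $V_n\rightharpoonup V$ and $L^{\gamma+d/2}(\R^d)$ is uniformly convex (here $\gamma+\tfrac d2>1$), convergence of the norms upgrades to strong convergence $V_n\to V$ in $L^{\gamma+d/2}(\R^d)$, and the equality $\sum_{j=1}^N|\lambda_j(-\Delta-V)|^\gamma=L^{(N)}_{\gamma,d}\int_{\R^d}V^{\gamma+d/2}$ says exactly that $V$ is an optimizer. I expect the hardest point to be the strong $H^{-1}$ convergence $V_nu_{j,n}\to Vu_j$: one must compensate for the fact that $u_{j,n}-u_j$ converges to $0$ only locally in $L^{2(\gamma+d/2)'}$, which is where both the tightness assumption~\eqref{eq:tight} and the strict sub‑criticality $2(\gamma+\tfrac d2)'<2^*$ are indispensable; everything else (a priori bounds, the min--max comparison, weak lower semicontinuity and uniform convexity) is soft.
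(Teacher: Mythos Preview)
Your proof is correct and follows the same overall architecture as the paper's: establish compactness of the eigenfunctions along the tight subsequence, deduce that the $\Lambda_j$ are eigenvalues of $-\Delta-V$ with an orthonormal system, and close with the sandwich
\[
L^{(N)}_{\gamma,d}\,\alpha\ \le\ \sum_{j=1}^N|\lambda_j(-\Delta-V)|^\gamma\ \le\ L^{(N)}_{\gamma,d}\int_{\R^d}V^{\gamma+d/2}\ \le\ L^{(N)}_{\gamma,d}\,\alpha,
\]
together with norm convergence in a uniformly convex space.

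The one genuine difference is in the mechanism for eigenfunction compactness. The paper multiplies the eigenvalue equation by $\zeta_R^2 u_{j,n}$ and integrates by parts to show directly that $(u_{j,n})$ is \emph{tight} in $H^1(\R^d)$ (using the tightness of $V_n$ and the spectral gap $\Lambda_j<0$), and then invokes Rellich to obtain strong convergence in $L^2$ and $L^{2(\gamma+d/2)'}$; this is enough to pass to the limit in the quadratic form. You instead write $u_{j,n}=(-\Delta-\lambda_{j,n})^{-1}(V_n u_{j,n})$ and prove the strong $H^{-1}$ convergence $V_nu_{j,n}\to Vu_j$ via a uniform-over-$\phi$ splitting argument, which yields the stronger conclusion $u_{j,n}\to u_j$ in $H^1$. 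Both routes use tightness of $V_n$ and the strict subcriticality $2(\gamma+d/2)'<2^*$ in exactly the same places. A small bonus of your approach is that you only need the one-sided inequality $\lambda_j(-\Delta-V)\le\Lambda_j$, so you can dispense with the separate weak upper-semicontinuity lemma (Lemma~\ref{lem:wusc}) that the paper proves on the way to the full equality $\lambda_j(-\Delta-V)=\Lambda_j$; conversely, the paper's localization argument is shorter and avoids the precompact-family pairing step.
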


For the proof we show the convergence of all the eigenvalues $\lambda_j(-\Delta-V_n)$ for $j\leq N$. We state this in two separate lemmas of independent interest, the first dealing with only the upper bound and the weak limit.

\begin{lemma}[Weak upper semi-continuity of the eigenvalues]\label{lem:wusc}
Assume that $W_n$ converges weakly to some $W$ in $L^{\gamma+\frac{d}2}(\R^d)$. Then we have for all $j\geq1$
\begin{equation}
 \limsup_{n\to\ii}\lambda_j(-\Delta-W_n) \leq  \lambda_j(-\Delta-W).
 \label{eq:wusc}
\end{equation}
\end{lemma}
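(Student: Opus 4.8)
The plan is to run the Courant--Fischer min--max characterization of the $\lambda_j$'s and to exploit the fact that a fixed smooth, compactly supported function is an admissible test object for the weak convergence $W_n\wto W$. First I would recall that since $W\in L^{\gamma+d/2}(\R^d)$ (and likewise each $W_n$, which is bounded in this space by the uniform boundedness principle) the essential spectrum of $-\Delta-W$ is $[0,\ii)$, so that $\lambda_j(-\Delta-W)$ equals the min--max value
\begin{equation*}
\mu_j(-\Delta-W):=\inf_{\substack{M\subset H^1(\R^d)\\ \dim M=j}}\ \sup_{\substack{u\in M\\ \norm{u}_{L^2}=1}}\left(\int_{\R^d}|\nabla u|^2-\int_{\R^d}W|u|^2\right),
\end{equation*}
which is always $\leq0$ (a family of far-apart spread-out bumps shows $\mu_j\le 0$). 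Under the standing assumption on $\gamma$ one has $H^1(\R^d)\hookrightarrow L^{2(\gamma+d/2)'}(\R^d)$, so $u\mapsto\int W|u|^2$ and each $u\mapsto\int W_n|u|^2$ are bounded quadratic forms on $H^1(\R^d)$, uniformly in $n$; this is what makes everything below legitimate.

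Next I would pin down a good trial space. Fix $\eps>0$. Since $C^\ii_c(\R^d)$ is a form core for $-\Delta-W$ (it is dense in $H^1$ and the form is continuous there), and using the definition of the infimum, I would produce a $j$-dimensional subspace $M\subset C^\ii_c(\R^d)$ with
\begin{equation*}
\sup_{\substack{u\in M\\ \norm{u}_{L^2}=1}}\left(\int_{\R^d}|\nabla u|^2-\int_{\R^d}W|u|^2\right)\leq \lambda_j(-\Delta-W)+\eps .
\end{equation*}
Concretely: pick a nearly optimal $j$-dimensional $M_0\subset H^1$, choose a basis, approximate each basis vector in $H^1$ by an element of $C^\ii_c$, and use continuity of the form together with compactness of the unit sphere of a finite-dimensional space to control the resulting perturbation of the supremum. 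This step also covers the degenerate case $\lambda_j(-\Delta-W)=0$, where one only needs that the infimum defining $\mu_j$ is $0$.

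With $M$ now fixed, I would pass to the weak limit. Choose a basis $e_1,\dots,e_j$ of $M$ and write $u=\sum_i c_ie_i$. Each $e_i\overline{e_k}\in C^\ii_c(\R^d)\subset L^{(\gamma+d/2)'}(\R^d)$, so $W_n\wto W$ gives convergence of the finitely many numbers $\int_{\R^d}W_n\,e_i\overline{e_k}\to\int_{\R^d}W\,e_i\overline{e_k}$; hence
\begin{equation*}
(c_1,\dots,c_j)\longmapsto\int_{\R^d}W_n\Big|\sum_i c_ie_i\Big|^2=\sum_{i,k}c_i\overline{c_k}\int_{\R^d}W_n\,e_i\overline{e_k}
\end{equation*}
converges, uniformly on the compact unit sphere $\{\,\norm{\sum_ic_ie_i}_{L^2}=1\,\}$, to the corresponding expression with $W$. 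Since $\int|\nabla u|^2$ is a fixed continuous function of $(c_i)$, the suprema over that sphere converge as well. Then, since $M$ is a $j$-dimensional subspace of the form domain, the min--max principle gives $\lambda_j(-\Delta-W_n)\leq\sup_{u\in M,\ \norm{u}_{L^2}=1}\big(\int|\nabla u|^2-\int W_n|u|^2\big)$, and passing to $\limsup_{n\to\ii}$ and invoking the previous two displays,
\begin{equation*}
\limsup_{n\to\ii}\lambda_j(-\Delta-W_n)\leq \sup_{\substack{u\in M\\ \norm{u}_{L^2}=1}}\left(\int_{\R^d}|\nabla u|^2-\int_{\R^d}W|u|^2\right)\leq \lambda_j(-\Delta-W)+\eps .
\end{equation*}
Letting $\eps\to0$ yields \eqref{eq:wusc}.

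The step I expect to need the most care is the uniform (over the unit sphere of $M$) passage to the weak limit in the potential term: this is exactly where finite-dimensionality of the trial space is essential, since testing $W_n\wto W$ against a single fixed function gives only a pointwise statement and cannot by itself control the supremum. Everything else — the identification $\lambda_j=\mu_j\le 0$, the $C^\ii_c$ approximation of a nearly optimal trial space, the embedding $H^1\hookrightarrow L^{2(\gamma+d/2)'}$ — is standard; no strong convergence or compactness of $(W_n)$ is used, which is consistent with the fact that only the ``$\limsup\le$'' half can hold under mere weak convergence (the reverse inequality being the subject of the compactness statement in Proposition~\ref{prop:tight}).
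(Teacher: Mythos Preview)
Your proof is correct and follows essentially the same route as the paper's: fix a finite-dimensional trial space for the min--max, use weak convergence of $W_n$ tested against the (finitely many) products of basis elements to pass to the limit in the potential term, and conclude via the variational principle. The only cosmetic differences are that the paper works directly with $H^1$ trial functions (the $C^\infty_c$ approximation is unnecessary since $u_i\overline{u_k}\in L^{(\gamma+d/2)'}$ already by Sobolev) and organizes the argument by extracting a subsequence of maximizing coefficients rather than your $\eps$--optimal trial space, but the substance is the same.
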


\begin{proof}[Proof of Lemma~\ref{lem:wusc}]
Let $u_1,...,u_j\in H^1(\R^d)$ be any set of orthonormal functions in $L^2(\R^d)$. Choose $\alpha_{k,n}$ with $\sum_{k=1}^j|\alpha_{k,n}|^2=1$ so that $v_n:=\sum_{k=1}^j\alpha_{k,n}u_k$ solves the finite-dimensional problem
$$\max_{\substack{w\in{\rm span}(u_1,...,u_j)\\ \|w\|_{L^2}=1}}\int_{\R^d}\left(|\nabla w|^2-W_n|w|^2\right).$$
After extracting a subsequence we can replace the limsup in~\eqref{eq:wusc} by a limit and assume that $\alpha_{k,n}\to\alpha_k$, so that $v_n\to v=\sum_{k=1}^j\alpha_{k}u_k$ strongly in $H^1(\R^d)$. From the weak convergence $W_n\wto W$ we conclude that
$$\lim_{n\to\ii}\int_{\R^d}\left(|\nabla v_n|^2-W_n|v_n|^2\right)=\int_{\R^d}\left(|\nabla v|^2-W|v|^2\right).$$
By the min-max principle we have
$$\lambda_j(-\Delta-W_n)\leq \int_{\R^d}\left(|\nabla v_n|^2-W_n|v_n|^2\right)$$
and after passing to the limit we obtain
\begin{align*}
\limsup_{n\to\ii}\lambda_j(-\Delta-W_n)&\leq \int_{\R^d}\left(|\nabla v|^2-W|v|^2\right)\\
&\leq \max_{\substack{w\in{\rm span}(u_1,...,u_j)\\ \|w\|_{L^2}=1}}\int_{\R^d}\left(|\nabla w|^2-W|w|^2\right).
\end{align*}
Minimizing over $u_1,...,u_j$ we conclude finally that~\eqref{eq:wusc} holds.
\end{proof}

Next, we prove the convergence of the eigenvalues under the additional assumption of tightness.

\begin{lemma}[Convergence of the eigenvalues for tight sequences]\label{lem:convergence_eigenvals}
Assume that $W_n$ converges weakly to some $W$ in $L^{\gamma+\frac{d}2}(\R^d)$ and is tight
\begin{equation}
 \lim_{R\to\ii}\limsup_{n\to\ii}\int_{\R^d\setminus B_R}W_n^{\gamma+\frac{d}2}=0.
 \label{eq:tight2}
\end{equation}
Then we have for all $j\geq1$
\begin{equation}
 \lim_{n\to\ii}\lambda_j(-\Delta-W_n) = \lambda_j(-\Delta-W).
 \label{eq:CV_eigenvalues}
\end{equation}
\end{lemma}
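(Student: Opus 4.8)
The upper bound $\limsup_n \lambda_j(-\Delta-W_n)\le\lambda_j(-\Delta-W)$ is exactly Lemma~\ref{lem:wusc}, which uses only weak convergence. So the plan is to prove the matching lower semicontinuity $\liminf_n\lambda_j(-\Delta-W_n)\ge\lambda_j(-\Delta-W)$ for each fixed $j$; this is where the tightness hypothesis~\eqref{eq:tight2} will be essential. Extracting a subsequence, I would assume $\lambda_{i,n}:=\lambda_i(-\Delta-W_n)\to\Lambda_i$ for all $i\le j$, with $\Lambda_1\le\cdots\le\Lambda_j\le0$. If $\Lambda_j=0$ there is nothing to do, since $\lambda_j(-\Delta-W)\le0$ by definition. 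So assume $\Lambda_j<0$; then all the $\Lambda_i$ with $i\le j$ are negative and, for $n$ large, $\lambda_{i,n}\le-\kappa^2<0$ with $\kappa:=\tfrac12|\Lambda_j|^{1/2}$.

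Let $u_{1,n},\dots,u_{j,n}$ be $L^2$-orthonormal eigenfunctions of $-\Delta-W_n$ for the eigenvalues $\lambda_{i,n}$. First I would record a uniform $H^1$ bound: from $\int|\nabla u_{i,n}|^2=\lambda_{i,n}+\int W_n|u_{i,n}|^2$ and the fact that, under our hypotheses on $\gamma$ (the exponent $\gamma+d/2$ exceeds $d/2$, and exceeds $1$ when $d=1$), the potentials $W_n$ are form-bounded relative to $-\Delta$ with a relative bound tending to $0$ as $\|W_n\|_{L^{\gamma+d/2}}\to0$ — a routine Gagliardo--Nirenberg and Young argument, the relevant Sobolev exponents being strictly subcritical — one obtains $\int|\nabla u_{i,n}|^2\le\tfrac12\int|\nabla u_{i,n}|^2+C$, hence a bound uniform in $n$ and $i$. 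Passing to a further subsequence, $u_{i,n}\wto u_i$ in $H^1(\R^d)$, strongly in $L^r_{\rm loc}(\R^d)$ for $r<2^*$ and a.e.

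The crux is to upgrade this to strong convergence in $L^2(\R^d)$ by proving uniform tightness of the eigenfunctions, which does not follow from tightness of $W_n$ alone and must be extracted from the equation. Given $R>0$, write $W_n=W_n\1_{B_R}+W_n\1_{B_R^c}$; by~\eqref{eq:tight2}, $\|W_n\1_{B_R^c}\|_{L^{\gamma+d/2}}\le\delta(R)$ with $\delta(R)\to0$ as $R\to\infty$, uniformly in $n$. For $R$ large (using again strict subcriticality and $\lambda_{i,n}\le-\kappa^2$) the operator $A_n:=-\Delta-W_n\1_{B_R^c}-\lambda_{i,n}$ satisfies $A_n\ge\tfrac12(-\Delta+\kappa^2)>0$, so a Combes--Thomas argument — conjugating $A_n$ by $e^{a\,{\rm dist}(\,\cdot\,,B_R)}$ for $a$ small enough, independent of $n$ — gives $\|e^{a\,{\rm dist}(\,\cdot\,,B_R)}A_n^{-1}f\|_{H^1}\le C\|f\|_{H^{-1}}$. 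Applied to the identity $u_{i,n}=A_n^{-1}\bigl(W_n\1_{B_R}\,u_{i,n}\bigr)$, whose right-hand side is supported in $B_R$ and bounded in $H^{-1}(\R^d)$ (Hölder plus the subcritical embedding $H^1\hookrightarrow L^{2(\gamma+d/2)'}$ and the $H^1$ bound on $u_{i,n}$), this yields $\|u_{i,n}\|_{H^1(B_\rho^c)}\le C\,e^{-a(\rho-R)}$, i.e. uniform tightness. Hence $u_{i,n}\to u_i$ strongly in $L^2(\R^d)$, the family $\{u_i\}_{i=1}^j$ stays $L^2$-orthonormal, and $u_{i,n}\to u_i$ strongly in $L^{2(\gamma+d/2)'}(\R^d)$ by interpolation between $L^2$ and the $H^1$ bound.

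It then remains to pass to the limit in $\int\nabla u_{i,n}\cdot\nabla\phi=\int(W_n+\lambda_{i,n})u_{i,n}\phi$ for $\phi\in C_c^\infty(\R^d)$: the potential term converges because $W_n\wto W$ in $L^{\gamma+d/2}$ while $u_{i,n}\phi\to u_i\phi$ strongly in $L^{(\gamma+d/2)'}$, giving $(-\Delta-W)u_i=\Lambda_i u_i$. Thus $-\Delta-W$ has $j$ orthonormal eigenfunctions with eigenvalues $\Lambda_1\le\cdots\le\Lambda_j<0$, so by min--max $\lambda_j(-\Delta-W)\le\Lambda_j=\lim_n\lambda_{j,n}$; since every subsequence admits a further one along which this holds, $\liminf_n\lambda_j(-\Delta-W_n)\ge\lambda_j(-\Delta-W)$, and combining with Lemma~\ref{lem:wusc} proves~\eqref{eq:CV_eigenvalues}. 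The main obstacle is exactly the uniform tightness of the eigenfunctions: since the potentials are controlled only in $L^{\gamma+d/2}$, with no pointwise or decay information, one has to propagate tightness to the $u_{i,n}$, and it is the exponential decay of the resolvent at energies uniformly below the essential spectrum (the Combes--Thomas/Agmon mechanism) that makes this possible.
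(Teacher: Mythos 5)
Your proof is correct and has the same overall architecture as the paper's: upper semicontinuity from Lemma~\ref{lem:wusc}, then a matching lower bound by showing the eigenfunctions are tight in $H^1$, extracting strong $L^2$ (hence $L^{2(\gamma+d/2)'}$) limits, and passing to the limit in the weak form of the eigenvalue equation. The single point where you diverge is \emph{how} tightness of the eigenfunctions is proved. The paper's proof does it by a bare-hands IMS-type computation: multiply $(-\Delta-W_n)u_{j,n}=\lambda_{j,n}u_{j,n}$ by $\zeta_R^2 u_{j,n}$ with a smooth cutoff $\zeta_R$ vanishing on $B_R$ and equal to $1$ outside $B_{2R}$, integrate by parts, and use $\lambda_{j,n}\le-E_0$ together with the tightness of $W_n$ to conclude
$$\int_{\R^d\setminus B_{2R}}\big(|\nabla u_{j,n}|^2+E_0|u_{j,n}|^2\big)\le C\norm{W_n}_{L^{\gamma+\frac{d}2}(\R^d\setminus B_R)}+\frac{C}{R^2}\,,$$
which is exactly the uniform smallness-at-infinity you are after. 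You instead run a Combes--Thomas argument: split off the tail $W_n\1_{B_R^c}$, observe $A_n:=-\Delta-W_n\1_{B_R^c}-\lambda_{i,n}\ge\tfrac12(-\Delta+\kappa^2)$ uniformly for $R$ large, conjugate by an exponential weight, and apply the decaying resolvent bound to $u_{i,n}=A_n^{-1}(W_n\1_{B_R}u_{i,n})$. Both routes are valid and rest on the same mechanism (negative eigenvalues bounded away from the essential spectrum, plus tightness of the potentials); yours yields a strictly stronger conclusion (uniform exponential decay in $H^1$ away from $B_R$) at the price of invoking more machinery, while the paper's IMS computation is shorter and gives only what is needed, namely tightness. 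One minor presentational point: your remark ``if $\Lambda_j=0$ there is nothing to do, since $\lambda_j(-\Delta-W)\le0$'' works, but only once combined with the already-established upper bound $\Lambda_j\le\lambda_j(-\Delta-W)$; the paper states this step explicitly, and so should you.
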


\begin{proof}
After extracting a subsequence we can assume, as usual, that the eigenvalues converge:
$$\lim_{n\to\ii}\lambda_{j}(-\Delta-W_n)=\Lambda_j,\qquad \forall j\leq1.$$
If $\Lambda_j=0$ then we must also have $\lambda_j(-\Delta-W)=0$ from the upper semi-continuity proved in~\eqref{eq:wusc}. Hence we only have to prove the convergence for the eigenvalues which do not approach the essential spectrum. The idea of the proof is to use this property to get some extra compactness at infinity.

Let us start with $j=1$ and assume that $\Lambda_1<0$. Fix an $0<E_0<-\Lambda_1$. For $n$ large enough we have $\lambda_{1,n}=\lambda_1(-\Delta-W_n)\leq  -E_0<0$, which is thus an eigenvalue. Let $u_{1,n}$ be a corresponding (real) eigenfunction:
\begin{equation}
\left(-\Delta -W_n\right)u_{1,n}=\lambda_{1,n}\,u_{1,n}.
\label{eq:eigenfn}
\end{equation}
Multiplying the equation by $u_{1,n}$ we deduce that $(u_{1,n})$ is bounded in $H^1(\R^d)$. Hence, after extracting a subsequence, we may assume that $u_{1,n}\wto u_1$ weakly in $H^1(\R^d)$. Next, we localize similarly as we did in the proof of Lemma~\ref{lem:spectrum_decouples}.  Let $\zeta_R:=\zeta(x/R)$ where $\zeta\in C^\ii(\R^d,[0,1])$ is so that $\zeta\equiv0$ on $B_1$ and $\zeta\equiv1$ on $\R^d\setminus B_2$. We multiply the equation by $\zeta_R^2u_{1,n}$ and integrate. This gives
$$\int_{\R^d}\nabla {u_{1,n}}\cdot \nabla (\zeta_R^2u_{1,n})-\lambda_{1,n}\int_{\R^d}\zeta_R^2|u_{1,n}|^2=\int_{\R^d}W_n\zeta_R^2|u_{1,n}|^2.$$
After integrating by parts we obtain
\begin{align*}
\int_{\R^d}\nabla {u_{1,n}}\cdot \nabla (\zeta_R^2u_{1,n})&=\int_{\R^d}\zeta_R^2|\nabla u_{1,n}|^2-\frac12\int_{\R^d}| u_{1,n}|^2(\Delta\zeta_R^2) \\
&\geq \int_{\R^d\setminus B_{2R}}|\nabla u_{1,n}|^2-\frac{C}{R^2}.
\end{align*}
Therefore, using $\lambda_{1,n}\leq -E_0$ and the fact that $u_{1,n}$ is bounded in $H^1(\R^d)$,  we have proved the inequality
$$\int_{\R^d\setminus B_{2R}}\left(|\nabla u_{1,n}|^2+E_0|u_{1,n}|^2\right)\leq C\norm{W_n}_{L^{\gamma+\frac{d}2}(\R^d\setminus B_R)}+\frac{C}{R^2}.$$
Due to the tightness~\eqref{eq:tight2} of $W_n$, this shows that $(u_{1,n})$ is tight in $H^1(\R^d)$. From Rellich's strong local convergence we deduce that $u_{1,n}\to u_1$ strongly in $L^p(\R^d)$ for all the exponents less than the critical Sobolev exponent, hence in particular for $p=2$ and $p=2(\gamma+d/2)'$. This implies that
$$\lim_{n\to\ii}\int_{\R^d}W_n|u_{1,n}|^2=\int_{\R^d}W|u_{1}|^2$$
and thus
\begin{align*}
\liminf_{n\to\ii}\lambda_{1,n}&=\liminf_{n\to\ii}\int_{\R^d}\left(|\nabla u_{1,n}|^2-W_n|u_{1,n}|^2\right)\\
&\geq \int_{\R^d}\left(|\nabla u_{1}|^2-W|u_{1}|^2\right)\geq\lambda_1(-\Delta-V)
\end{align*}
where in the last inequality we have used that $\|u_1\|_{L^2}=1$. This concludes our proof that $\lambda_{1,n}$ converges to $\lambda_1(-\Delta-W)$.

We then go on by induction, as soon as $\Lambda_j<0$. For instance, if $\Lambda_2<0$ we consider a second eigenfunction $u_{2,n}$ which we take orthogonal to $u_{1,n}$ for all $n$ (this is in fact automatic since the first eigenvalue is non-degenerate). The same arguments as before show that $u_{2,n}$ converges strongly to some $u_2$ in $L^p(\R^d)$, up to a subsequence, with
$$\liminf_{n\to\ii}\lambda_{2,n}\geq\int_{\R^d}\left(|\nabla u_2|^2-W|u_2|^2\right).$$
Due to the strong convergence of both $u_{1,n}$ and $u_{2,n}$ in $L^2(\R^d)$, we have $\pscal{u_1,u_2}=0$ and therefore the right side is larger than or equal to $\lambda_2(-\Delta-W)$, by the min-max principle.  This proves that $\lambda_{2,n}$ converges to $\lambda_2(-\Delta-W)$. The argument is similar for larger $j$'s, choosing always $u_{j,n}$ orthogonal to the previously considered functions. This concludes the proof of Lemma~\ref{lem:convergence_eigenvals}.
\end{proof}

With the two lemmas at hand we can now provide the proof of Proposition~\ref{prop:tight} on our maximizing sequence $(V_n)$.

\begin{proof}
The eigenvalues converge thanks to Lemma~\ref{lem:convergence_eigenvals}. Passing to the limit and using the Lieb-Thirring inequality, we deduce that
\begin{align*}
\alpha L^{(N)}_{\gamma,d}=\lim_{n\to\ii}\sum_{j=1}^N|\lambda_j(-\Delta-V_n)|^\gamma&=\sum_{j=1}^N|\lambda_j(-\Delta-V)|^\gamma\\
&\leq L^{(N)}_{\gamma,d}\int_{\R^d}V^{\gamma+\frac{d}2}\leq L^{(N)}_{\gamma,d}\alpha
\end{align*}
since $\int_{\R^d}V^{\gamma+\frac{d}2}\leq\alpha$ due to the weak convergence $V_n\wto V$. Thus there must be equality everywhere. From the strict positivity of $\alpha$ we obtain that $\int_{\R^d}V^{\gamma+\frac{d}2}=\alpha$ and thus that $V_n\to V$ strongly~\cite[Prop.~3.32]{Brezis-11}. We also infer that $V$ is an optimizer for $L^{(N)}_{\gamma,d}$.
\end{proof}

\subsubsection*{Step 4. Iterating the construction}
Since $(V_n)$ is bounded in $L^{\gamma+d/2}(\R^d)$ we may assume, after extraction of a subsequence, that $V_n\wto V=:V^{(1)}$  weakly. If $\alpha^{(1)}=1$, then $(V_n)$ is a tight maximizing sequence for $L^{(N)}_{\gamma,d}$ and thus must converge strongly by Proposition~\ref{prop:tight}. We thus take $K=1$ and $N_1=N$ and we have proved the theorem.

If  $\alpha^{(1)}<1$ we let $N_1:=M$, the integer given by Proposition~\ref{prop:dichotomy_spectrum}. From this proposition we know that $(V_n\1_{B_{R_n}})$ is a tight maximizing sequence for $L^{(N_1)}_{\gamma,d}$ (not necessarily normalized), which also converges weakly to $V^{(1)}=V$. Thus $V^{(1)}$ is a maximizer for $L^{(N_1)}_{\gamma,d}$ and the convergence is strong, by Proposition~\ref{prop:tight}. In~\eqref{eq:dichotomy} we can thus replace $V_n\1_{B_{R_n}}$ by $V^{(1)}$. Using $V^{(1)}$ as a trial state in $L^{(N_1+1)}_{\gamma,d}=L^{(N_1)}_{\gamma,d}=L^{(N)}_{\gamma,d}$, we obtain $\lambda_{N_1+1}(-\Delta-V^{(1)})=0$. This shows that $V^{(1)}$ is in fact an optimizer for $L^{(N)}_{\gamma,d}$ as well.

The argument then goes on by induction. We consider the new sequence $V_n':=V_n\1_{\R^d\setminus B_{2R_n}}$ which is a maximizing sequence for $L^{(N-M)}_{\gamma,d}=L^{(N)}_{\gamma,d}$ and to which we can apply the whole construction again. Either it is tight up to a translation and then we have $K=2$, or we can extract a tight piece as before and we go on. Since the corresponding number $N-M$ of eigenvalues for the part at infinity is a strictly decreasing sequence of integers, the construction must stop after at most $N$ steps. This concludes the proof of Theorem~\ref{thm:bubble}. \qed

\section{Proof of Theorem~\ref{thm:equation} (equation and consequences)}\label{sec:proof_equation}

Let $V_*\geq0$ be any normalized optimizer for $L^{(N)}_{\gamma,d}$ and let $M$ be the number of negative eigenvalues (counted with multiplicity) of $-\Delta-V_*$ (we allow for the possibility that $M=\ii$). If $M<N$ then $V$ is also a minimizer for $L^{(M)}_{\gamma,d}=L^{(N)}_{\gamma,d}$ and we automatically have the gap $\lambda_M<\lambda_{M+1}=0$. Replacing $N$ by $M$ we can thus, without loss of generality, always assume that
$$\lambda_{N}<0$$
throughout the proof. We denote by $u_j$ any associated system of $N$ orthonormal eigenfunctions and aim at showing that
\begin{equation}
V_*(x)=\left(\frac{2\gamma}{(d+2\gamma)L_{\gamma,d}^{(N)}}\sum_{j=1}^N|\lambda_j|^{\gamma-1}|u_j(x)|^2\right)^{\frac1{\gamma+\frac{d}2-1}}.
 \label{eq:equation_V_optimal2}
\end{equation}

\subsubsection*{Step 1. Euler-Lagrange equation, assuming the gap}
It is convenient to first \textbf{assume}
\begin{equation}
\lambda_N<\lambda_{N+1}
\label{eq:no-unfilled-shell_proof}
\end{equation}
and write the proof of~\eqref{eq:equation_V_optimal2} in this case. We will prove~\eqref{eq:no-unfilled-shell_proof} in Step 2. Consider the numbers
$$a'=3\lambda_1,\qquad a=2\lambda_1,\qquad b=\frac{2\lambda_{N}+\lambda_{N+1}}{3},\qquad  b'=\frac{\lambda_{N}+2\lambda_{N+1}}{3}$$
which are so that
$$a'<a<\lambda_1<\lambda_N<\lambda_{N+1}<b<b'<\lambda_{N+1}.$$
Let $f\in C^\ii_c(\R_-)$ be any function supported in $[a',b']$ such that $f(x)=(x)_-^\gamma$ on $[a,b]$. Let $\chi\in L^{\gamma+d/2}(\R^d,\R)$ be any test function. Then, by perturbation theory~\cite{Kato,ReeSim4}, we know that $-\Delta-V_*-\eps\chi$ possesses exactly $N$ negative eigenvalues in $[a,b]$, which converge to the unperturbed eigenvalues $\lambda_j$ in the limit $\eps\to0$. By definition of $f$, we have
$$\sum_{j=1}^N|\lambda_j(-\Delta-V_*-\eps\chi)|^\gamma=\tr f(-\Delta-V_*-\eps\chi)$$
for any small enough $\eps$. This function is differentiable in $\eps$ with
\begin{align*}
\tr f(-\Delta-V_*-\eps\chi) &=\tr f(-\Delta-V_*)-\eps\tr \big(f'(-\Delta-V_*)\chi\big)+o(\eps)\\
&=L^{(N)}_{\gamma,d}+\eps \gamma\sum_{j=1}^N |\lambda_j|^{\gamma-1}\int_{\R^d}|u_j|^2\chi+o(\eps).
\end{align*}
On the other hand, we have
$$\int_{\R^d}(V_*+\eps\chi)_+^{\gamma+\frac{d}2}=1+\eps\frac{d+2\gamma}2\int_{\R^d}V_*^{\gamma+\frac{d}2-1}\chi+o(\eps).$$
By optimality of $V_*$ we know that
$$\frac{\sum_{j=1}^N|\lambda_j(-\Delta-V_*-\eps\chi)|^\gamma}{\int_{\R^d}(V_*+\eps\chi)_+^{\gamma+\frac{d}2}}\leq L^{(N)}_{\gamma,d}.$$
Expanding the left side in $\eps$ we obtain
$$\eps\left(\gamma\sum_{j=1}^N |\lambda_j|^{\gamma-1}\int_{\R^d}|u_j|^2\chi-L^{(N)}_{\gamma,d}\frac{d+2\gamma}2\int_{\R^d}V_*^{\gamma+\frac{d}2-1}\chi\right)\leq 0.$$
Since this is valid for positive and negative small enough $\eps$, we obtain
$$\gamma\sum_{j=1}^N |\lambda_j|^{\gamma-1}\int_{\R^d}|u_j|^2\chi=L^{(N)}_{\gamma,d}\frac{d+2\gamma}2\int_{\R^d}V_*^{\gamma+\frac{d}2-1}\chi.$$
Varying $\chi$ we finally deduce that
$$\gamma\sum_{j=1}^N |\lambda_j|^{\gamma-1}|u_j|^2=L^{(N)}_{\gamma,d}\frac{d+2\gamma}2V_*^{\gamma+\frac{d}2-1}$$
which is Equation~\eqref{eq:equation_V_optimal2}.

\subsubsection*{Step 2. Gap}

Next, we prove the strict inequality $\lambda_N<\lambda_{N+1}$. We argue by contradiction and derive an equation for $V_*$ in the case that $\lambda_N=\lambda_{N+1}<0$. The argument is similar to the one above, but slightly more involved due to the degeneracy of $\lambda_N$. We call $m$ the multiplicity of $\lambda_N$ and $K$ the smallest integer so that $\lambda_{N-K}<\lambda_N$. We recall that $\lambda_1$ is always non-degenerate by Perron-Frobenius and that $N\geq2$. Thus there is always such a $K$. We then have
$$\lambda_{N-K}<\lambda_{N-K+1}=\cdots=\lambda_N=\cdots =\lambda_{N-K+m}<\lambda_{N-K+m+1}$$
with $N-K+m>N$, by definition of $m$ and $K$. Next, we look at the spectrum of $-\Delta-V_*-\eps\chi$ as we did before. For $\eps\to0$, this operator has $N-K$ eigenvalues converging to the $\lambda_j$ with $j\leq N-K$ and thus staying strictly below $\lambda_N$, together with $m>K$ eigenvalues converging to $\lambda_N$. Since we are looking at a linear perturbation in $\eps$, it is known~\cite{Kato} that the eigenvalues close to $\lambda_N$ form $m$ smooth (in fact, real-analytic) curves which all cross at $\eps=0$. Being ordered increasingly, the corresponding eigenvalues $\lambda_j(-\Delta-V_*-\eps\chi)$ are in general not smooth at $\epsilon=0$ but only Lipschitz-continuous. The $m$ curves behave to leading order as
$$\lambda_N-\eps \mu_j^\chi+o(\eps)$$
where $\mu_1^\chi\leq\cdots\leq \mu_m^\chi$ are the $m$ (ordered) eigenvalues of the $m\times m$ matrix
\begin{equation}
 \left(\int_{\R^d}\overline{u_j}u_k\chi\right)_{N-K+1\leq j,k\leq N-K+m}
 \label{eq:matrix_analytic_pert}
\end{equation}
where $u_{N-K+1},\dots,u_{N-K+m}$ is any basis of eigenfunctions in the eigenspace $\ker(-\Delta-V_*-\lambda_N)$. This is just the restriction of the multiplication operator $\chi$ to this eigenspace. In our case we always select the $K$ lowest eigenvalues in this set of $m$ curves. To leading order, those are given by the $K$ largest $\mu_j^\chi$ for $\eps>0$ and the $K$ smallest ones for $\eps<0$. Thus we obtain
\begin{multline}
\sum_{j=1}^N|\lambda_j(-\Delta-V_*-\eps\chi)|^\gamma=L^{(N)}_{\gamma,d}+\eps \gamma\sum_{j=1}^{m-K} |\lambda_j|^{\gamma-1}\int_{\R^d}|u_j|^2\chi\\
+\eps \gamma|\lambda_N|^{\gamma-1}\sum_{k=1}^K\mu_{L+k}^\chi+o(\eps)
\end{multline}
with $L=m-K$ for $\eps>0$ and $L=0$ for $\eps<0$.
The first order condition is
\begin{multline*}
\eps\bigg(\gamma\sum_{j=1}^{m-K} |\lambda_j|^{\gamma-1}\int_{\R^d}|u_j|^2\chi+ \gamma|\lambda_N|^{\gamma-1}\sum_{k=1}^K\mu_{L+k}^\chi\\
-L^{(N)}_{\gamma,d}\frac{d+2\gamma}2\int_{\R^d}V_*^{\gamma+\frac{d}2-1}\chi\bigg)\leq 0
\end{multline*}
and after looking at $\eps>0$ and $\eps<0$, this gives the series of inequalities
\begin{align*}
|\lambda_N|^{\gamma-1}\sum_{k=1}^K\mu_{m-K+k}^\chi&\leq L^{(N)}_{\gamma,d}\frac{d+2\gamma}{2\gamma}\int_{\R^d}V_*^{\gamma+\frac{d}2-1}\chi -\sum_{j=1}^{m-K} |\lambda_j|^{\gamma-1}\int_{\R^d}|u_j|^2\chi\\
&\leq |\lambda_N|^{\gamma-1}\sum_{k=1}^K\mu_{k}^\chi.
\end{align*}
The sum of the $K$ lowest eigenvalues in the last term is always less or equal than the sum of the $K$ largest in the first term. Thus there must be equality everywhere. Since $K<m$, this implies in particular that the matrix in~\eqref{eq:matrix_analytic_pert} is a multiple of the identity. Thus we must have
$$\int_{\R^d}u_j\overline{u_k}\chi=0,\qquad \int_{\R^d}(|u_j|^2-|u_k|^2)\chi=0,$$
for all $j\neq k$ and all $\chi\in L^{\gamma+d/2}(\R^d)$. The first condition implies that $u_j\overline{u_k}=0$, that is, $u_j$ and $u_k$ have disjoint supports. The second one implies $|u_j|^2=|u_k|^2$, which is a contradiction since the eigenfunctions $u_j$ cannot be identically equal to 0. Thus we conclude that $\lambda_N=\lambda_{N+1}$ cannot hold, as we claimed.

\subsubsection*{Step 3. Regularity}

It is useful at this point to introduce the parameter
\[
    p := \frac{\gamma + \frac{d}{2}}{\gamma + \frac{d}{2} - 1}=\left(\gamma+\frac{d}2\right)'.
\]
Since $\gamma > \max \{ 0, 1-d/2 \}$, we have $p > 1$.

\begin{lemma}[Regularity]
    Any optimizer $V_* \in L^{\gamma + \frac{d}{2}}(\R^d)$ for $L^{(N)}_{\gamma,d}$ is real analytic and tends to $0$ at infinity.
\end{lemma}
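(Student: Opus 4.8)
The plan is to read off every property of $V_*$ from the Euler--Lagrange system already obtained in Steps~1--2. Taking the eigenfunctions real-valued, we have, for $j=1,\dots,N$ with $\lambda_j<0$,
\[
-\Delta u_j=(V_*+\lambda_j)u_j,\qquad V_*=\big(c_{\gamma,d}^{(N)}\rho\big)^{p-1},\qquad \rho:=\sum_{j=1}^N|\lambda_j|^{\gamma-1}u_j^2 ,
\]
with $p-1=\tfrac1{\gamma+d/2-1}>0$. Two structural observations will be used throughout. First, $\rho\geq|\lambda_1|^{\gamma-1}u_1^2$; by the standard Perron--Frobenius argument for Schr\"odinger operators with an infinitesimally form-bounded potential, the ground state $u_1$ is simple and may be taken strictly positive, so that once the $u_j$ are continuous we have $\rho>0$ everywhere and $\rho\geq c_K>0$ on each compact set $K$. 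Hence $V_*>0$ pointwise and $t\mapsto t^{p-1}$ is real-analytic on a neighbourhood of $\rho(K)$ for every $K$. Second, the only place where subcriticality $\gamma>0$ is genuinely needed is the elliptic bootstrap.

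\emph{Step A: bootstrap to $C^\infty$.} Using $u_j=(-\Delta+|\lambda_j|)^{-1}(V_*u_j)$, start from $u_j\in H^1(\R^d)\hookrightarrow L^{2d/(d-2)}$ for $d\geq3$ (the cases $d\leq2$ being only easier). If $u_j\in L^q$, then H\"older with $V_*\in L^{\gamma+d/2}$ gives $V_*u_j\in L^r$ with $r^{-1}=q^{-1}+(\gamma+\tfrac d2)^{-1}$, and Calder\'on--Zygmund plus Sobolev embedding give $u_j\in L^{q'}$ with $(q')^{-1}=q^{-1}-\big(\tfrac2d-(\gamma+\tfrac d2)^{-1}\big)$. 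The gain $\tfrac2d-(\gamma+\tfrac d2)^{-1}$ is strictly positive precisely because $\gamma>0$, so after finitely many iterations $u_j\in L^q$ for all $q<\infty$; choosing then $d/2<r<\gamma+d/2$, one more step gives $u_j\in W^{2,r}\hookrightarrow L^\infty\cap C^{0,\alpha}$, hence $V_*\in L^\infty\cap C^{0,\beta}$. Feeding $V_*\in L^\infty$ back in yields $u_j\in W^{2,q}$ for all $q<\infty$, so $u_j\in C^{1,\alpha}$ for all $\alpha<1$ and $V_*\in C^{1,\beta}$; an obvious Schauder induction then gives $u_j\in C^\infty(\R^d)$ and $V_*\in C^\infty(\R^d)$.

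\emph{Step B: real-analyticity.} The $u_j$ solve the semilinear elliptic system
\[
-\Delta u_j=\lambda_j u_j+c_{\gamma,d}^{(N)}\Big(\sum_{k=1}^N|\lambda_k|^{\gamma-1}u_k^2\Big)^{p-1}u_j,\qquad j=1,\dots,N,
\]
whose right-hand side is, on the open set $\{\sum_k|\lambda_k|^{\gamma-1}u_k^2>0\}$ (which contains the graph of the solution), a real-analytic function of $(u_1,\dots,u_N)$ with no explicit dependence on $x$. Since the $u_j$ are already smooth, the classical interior-analyticity theorem for solutions of elliptic systems with real-analytic nonlinearities applies and shows that each $u_j$ is real-analytic on $\R^d$. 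Consequently $V_*=(c_{\gamma,d}^{(N)}\rho)^{p-1}$ is real-analytic, being the composition of the real-analytic functions $u_j$ with $t\mapsto t^{p-1}$ on $(0,\infty)$.

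\emph{Step C: decay, and the main obstacle.} Since each $u_j\in C^{1,\alpha}(\R^d)$ has globally bounded norm and lies in $L^2(\R^d)$, we have $u_j(x)\to0$ as $|x|\to\infty$, hence $V_*(x)\to0$ (and $V_*$ is then uniformly continuous, consistent with $V_*\in L^{\gamma+d/2}$). For the exponential rate stated in Theorem~\ref{thm:equation}, pick $R_0$ with $V_*\leq|\lambda_j|/2$ on $\{|x|\geq R_0\}$ for all $j\leq N$; Kato's inequality makes $|u_j|$ a distributional subsolution of $-\Delta+|\lambda_j|/2$ outside $B_{R_0}$, and comparison with $Me^{-c|x|}$ for any $c^2<|\lambda_j|/2$ gives $|u_j(x)|\leq Me^{-c|x|}$, whence $V_*(x)\leq Ce^{-2(p-1)c_{\min}|x|}$. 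I expect the only real work to be the bootstrap of Step~A: it is routine, but it is exactly where $\gamma>0$ enters, since at $\gamma=0$ the gain per step degenerates to $0$ — which is precisely why the critical case needs the separate analysis of Theorem~\ref{thm:equation_critical}. The single non-elementary input is the analyticity theorem for elliptic systems invoked in Step~B, which I would cite rather than reprove.
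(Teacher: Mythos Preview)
Your proof is correct and follows the same route as the paper: an elliptic $L^q$ bootstrap to reach $C^{1,\alpha}$, strict positivity of the ground state (the paper uses Harnack, you use Perron--Frobenius) to ensure $V_*>0$ everywhere, and then Morrey's analyticity theorem for elliptic systems with real-analytic nonlinearities. The only cosmetic difference is that the paper's bootstrap uses the nonlinear relation $V_*=(c\rho)^{p-1}$ to improve $V_*$ simultaneously at each step, whereas you keep $V_*\in L^{\gamma+d/2}$ fixed and improve only the $u_j$; both versions work for exactly the reason you identify ($\gamma>0$).
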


\begin{proof}
Upon dilating $V_*$ we can assume that it is normalized in $L^{\gamma+d/2}(\R^d)$ and still is an optimizer. As explained in the beginning of Section~\ref{sec:proof_equation}, by changing the value of $N$ we can assume that $\lambda_N<0$. Let $u_j$ and $\lambda_j$ denote, as before, the eigenfunctions and corresponding eigenvalues of $-\Delta-V_*$. We have
\begin{equation}
    (- \Delta - \lambda_j) u_j = - V_* u_j , \quad \text{and} \quad
    V_* = \left( C \sum_{j=1}^N | \lambda_j |^{\gamma - 1} | u_j |^2 \right)^{p-1}.
 \label{eq:SCF_V_u_proof}
\end{equation}
    First, since $V_* \in L^{\gamma + \frac{d}{2}}(\R^d)$, the second equation directly gives
    \[
        u_j \in L^{r_0}(\R^d), \quad \text{with} \quad r_0 := (d + 2 \gamma)(p-1).
    \]
    We now bootstrap to gain regularity. Assume $u_j \in L^r(\R^d)$ for some $r \ge r_0$, and all $1 \le j \le N$. We deduce that $V_* \in L^{\frac{r}{2(p-1)}}(\R^d)$, and that $V_* u_j \in L^{\frac{r}{2p-1}}(\R^d)$. By elliptic regularity, this gives $u_j \in W^{2, \frac{r}{2p-1}}$. Using the Sobolev embedding, we deduce that
    \[
        u_j \in L^r(\R^d) \implies \begin{cases}
            u_j \in L^q(\R^d) \quad \forall q \in [r, \infty), \quad \text{if} \quad r \ge \frac{d}{2}(2p-1) \\
            u_j \in L^q(\R^d) \quad \forall q \in [r, r^*], \quad \text{with} \quad r^* := r \left(\frac{d}{d(2p-1) - 2r}\right).
        \end{cases}
    \]
    The function $f(x) := x \left( \frac{d}{d(2p-1) - 2x} \right)$ has a unique fixed point $x = d(p-1)$, which is unstable. So, $r_* > r$ if and only if $r > d(p-1)$, which is the case since $\gamma > 0$ and thus $r \ge r_0 > d(p-1)$. Repeating the argument a finite number of times shows that $u_j \in L^q(\R^d)$ for all $q<\ii$. Thus, by the equation~\eqref{eq:SCF_V_u_proof}, $V_* \in L^q(\R^d)$ and therefore also $V_* u_j \in L^q(\R^d)$ for any $q<\infty$. By Calderon-Zygmund, we obtain that $u \in W^{2,q}(\R^d)$  for all $q<\infty$ and then by the Sobolev-Morrey embedding this proves that $u \in C^{1,\alpha}(\R^d)$ for any $\alpha<1$ and tend to 0 at infinity.

    By Harnack's inequality~\cite{Trudinger-73,AizSim-82,LieLos-01}, we have $u_1>0$ everywhere and thus $V_* > 0$ everywhere. It follows that the functions $u_j$ are real analytic~\cite{Morrey-58} for $j=1,...,N$, hence so is $V_*$.
\end{proof}

\subsubsection*{Step 4. Decay}
For $q > 1$, we denote by
\[
    [f]_q(x) := \left( \frac{1}{| \bS^{d-1} | } \int_{\bS^{d-1}} \left| f(| x | \omega)  \right|^q \rd \sigma(\omega) \right)^\frac{1}{q}=\left( \int_{{\rm SO}(d)} \big| f\big(\cR x\big) \big|^q\, \rd \cR \right)^{ \frac1q}
\]
the $q$-spherical average of $f$.

\begin{lemma}[Decay] \label{lem:decay_opt}
    Let $0\leq V_* \in L^{\gamma + \frac{d}{2}}(\R^d)$ be an optimial potential for $L^{(N)}_{\gamma,d}$. Let $(\lambda_j, u_j)$ be the corresponding eigenpairs, repeated according to their multiplicities, with $\lambda_1 < \lambda_2 \le \cdots \le \lambda_N < \lambda_{N+1}\leq 0$. Then
    \[
        \dfrac{1}{C} \dfrac{e^{ - \sqrt{| \lambda_j| } | x| }}{1 + | x |^{d-1}} \le [u_j]_2(x)
        \quad \text{and} \quad
        |u_j  (x)| + | \nabla u_j |(x) \le C \dfrac{e^{ - \sqrt{| \lambda_j| } | x| }}{1 + | x |^{d-1}}.
    \]
    In particular, $V_*$ is exponentially decreasing, with
\begin{equation}
       0 < V_*(x) \le C \left( \dfrac{e^{-\sqrt{| \lambda_N |} | x |}}{1 + | x |^{d-1}}  \right)^{\frac{2}{\gamma + \frac{d}{2} - 1}}.
       \label{eq:exp_decay}
\end{equation}
%    \[
%        \left( \dfrac{1}{C} \dfrac{e^{ - \sqrt{| \lambda_N| } | x| }}{1 + | x |^{d-1}} \right)^{} \le [u_j]_2(x)
%        \quad \text{and} \quad
%        |u_j | (x) + | \nabla u_j |(x) \le C \dfrac{e^{ - \sqrt{| \lambda_j| } | x| }}{1 + | x |^{d-1}}.
%    \]
\end{lemma}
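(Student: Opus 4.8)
\emph{Approach.} This is a classical eigenfunction–decay statement and I would prove it in three stages: (i) an Agmon estimate together with the elliptic bootstrap already used in the previous lemma, giving pointwise exponential decay at a sub‑optimal rate; (ii) the resolvent identity $u_j=(-\Delta+|\lambda_j|)^{-1}(V_*u_j)$, compared with the Yukawa kernel, to upgrade this to the sharp rate $\sqrt{|\lambda_j|}$ with the stated polynomial prefactor and, at the same time, to get the lower bound; (iii) insertion of the $u_j$–bounds into the formula for $V_*$ to conclude~\eqref{eq:exp_decay}. Throughout I use what has just been established: $V_*$ is real‑analytic, bounded, strictly positive and vanishes at infinity; the $u_j$ lie in $C^{1,\alpha}(\R^d)$ and vanish at infinity; and $\lambda_1<\lambda_2\le\cdots\le\lambda_N<0$, so each $\lambda_j$ with $j\le N$ is isolated below the essential spectrum $[0,\ii)$.

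\emph{Soft decay.} Fix $j$ and $0<a<\sqrt{|\lambda_j|}$. Since $V_*\to0$, choose $R$ with $V_*\le(|\lambda_j|-a^2)/2$ on $\{|x|\ge R\}$, so that $-\Delta-V_*-\lambda_j\ge-\Delta+(|\lambda_j|-a^2)/2>0$ there. Multiplying $(-\Delta-V_*-\lambda_j)u_j=0$ by $e^{2a|x|}u_j$ cut off near infinity and integrating by parts yields, in the usual way, $e^{a|x|}u_j\in H^1(\R^d)$. Feeding this into $(-\Delta+|\lambda_j|)u_j=V_*u_j$ and running the Calderon–Zygmund and Sobolev–Morrey bootstrap of the previous lemma (which is insensitive to the fixed exponential weight) upgrades this to $|u_j(x)|+|\nabla u_j(x)|\le C_a\,e^{-a|x|}$ for every $a<\sqrt{|\lambda_j|}$.

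\emph{Sharp rate, prefactor and decay of $V_*$.} From the formula $V_*=(C\sum_k|\lambda_k|^{\gamma-1}|u_k|^2)^{p-1}$ with $p-1=(\gamma+d/2-1)^{-1}>0$, the previous step shows that $V_*$, hence $V_*u_j$, decays exponentially at a rate \emph{strictly larger} than $\sqrt{|\lambda_j|}$. I would then use $u_j=G_{\sqrt{|\lambda_j|}}*(V_*u_j)$, where $G_m\ge0$ is the Yukawa kernel, satisfying $G_m(z)\le C|z|^{2-d}$ near the origin (the short‑range singularity being logarithmic for $d=2$ and absent for $d=1$) and $G_m(z)\le C\,e^{-m|z|}(1+|z|)^{-(d-1)/2}$ at infinity. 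Estimating this convolution directly — splitting the $y$–integral into the regions near $x$, near the origin, and in between, and using that the source decays faster than the kernel — produces the upper bound on $|u_j|$ claimed in the lemma; the bound on $|\nabla u_j|$ then follows from interior elliptic estimates for $(-\Delta+|\lambda_j|)u_j=V_*u_j$ on unit balls $B_1(x)$, using the bound on $u_j$ and $\|V_*\|_\infty<\ii$. Plugging the resulting bounds on the $|u_k|$ back into the formula for $V_*$, and noting that $\min_{k\le N}|\lambda_k|=|\lambda_N|$, gives exactly~\eqref{eq:exp_decay}; strict positivity of $V_*$ is already known.

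\emph{Lower bound, and the main obstacle.} For the ground state the lower bound is immediate: $u_1>0$ and $V_*u_1>0$ is not identically zero, so since $G_{\sqrt{|\lambda_1|}}\ge0$ one has $u_1(x)=\int G_{\sqrt{|\lambda_1|}}(x-y)(V_*u_1)(y)\,\rd y\ge c\,G_{\sqrt{|\lambda_1|}}(x)\ge C^{-1}e^{-\sqrt{|\lambda_1|}|x|}(1+|x|^{d-1})^{-1}$, and the same bound passes to the spherical average $[u_1]_2$. For an excited eigenfunction one extracts the leading term of the same convolution along a direction $\omega\in\bS^{d-1}$, which is a constant multiple of the Laplace‑type transform $\omega\mapsto\int e^{\sqrt{|\lambda_j|}\,\omega\cdot y}(V_*u_j)(y)\,\rd y$; this function is real‑analytic on $\bS^{d-1}$ and can be shown not to vanish identically, which prevents $[u_j]_2$ from decaying faster than $e^{-\sqrt{|\lambda_j|}|x|}(1+|x|^{d-1})^{-1}$. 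I expect the genuine difficulty to be precisely this passage to the \emph{sharp} rate: soft Agmon only delivers $\sqrt{|\lambda_j|}-\delta$, and obtaining the exact exponent with a usable polynomial prefactor — together with the matching lower bound for the sign‑changing excited eigenfunctions — requires the quantitative Yukawa‑convolution analysis above, including the non‑degeneracy of its leading coefficient.
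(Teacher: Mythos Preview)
Your proposal is correct and follows essentially the same approach as the paper, which in fact omits the proof and refers to \cite[Lemma~19]{GonLewNaz-21} and \cite{BarMer-77,HofHofSwe-85}; your Agmon-plus-Yukawa-convolution scheme, together with the analyticity of the spherical Laplace transform $\omega\mapsto\int e^{\sqrt{|\lambda_j|}\,\omega\cdot y}(V_*u_j)(y)\,\rd y$ for the lower bound, is precisely the content of those references. You also correctly identify the genuine difficulty, namely the passage from the soft Agmon rate to the sharp rate with the right polynomial prefactor and the non-degeneracy argument for excited states.
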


We do not provide the proof of the lemma, which follows exactly the arguments in~\cite[Lemma 19]{GonLewNaz-21}, see also~\cite{BarMer-77, HofHofSwe-85}.

From the exponential decay of $V_*$ at infinity, it is rather easy to conclude that there are finitely many negative eigenvalues.
This is in~\cite[Thm.~XIII.6]{ReeSim4} for $d=3$ and in~\cite[Chap.~IV, Thm.~6]{Glazman} for $d\neq2$, but the proof is similar in $d=2$. In fact the finiteness also follows from the CLR inequality in dimension $d\geq3$ or Bargmann's bounds for central potentials~\cite{Bargmann-52} in dimensions $d\geq1$. This concludes the proof of Theorem~\ref{thm:equation}.\qed

%%%%%%%%%%%%%%%%%%%%%%%%%%%%%%%%%%%%%%%%%%%%%%%%%%%%%%%%%%%
%%%%%%%%%%%%%%%%%%%%%%%%%%%%%%%%%%%%%%%%%%%%%%%%%%%%%%%%%%%
\section{Proof of Theorem~\ref{thm:existence} (binding for $\gamma>\max(0,2-d/2)$)}
%%%%%%%%%%%%%%%%%%%%%%%%%%%%%%%%%%%%%%%%%%%%%%%%%%%%%%%%%%%
%%%%%%%%%%%%%%%%%%%%%%%%%%%%%%%%%%%%%%%%%%%%%%%%%%%%%%%%%%%

Our proof is divided into two steps. The first is to show that if $L^{(N)}_{\gamma,d}$ admits an optimizer, then $L^{(2N)}_{\gamma,d}>L^{(N)}_{\gamma,d}$. The argument is similar to the one used in~\cite{GonLewNaz-21,FraGonLew-21} except that we have to work with the potentials instead of the eigenfunctions. The next step is to use the bubble decomposition of Theorem~\ref{thm:bubble} to infer that $L^{(N)}_{\gamma,d}$ always admits an optimizer. Only the first step uses the additional condition
\begin{equation}
\gamma>\max\left(0,2-\frac{d}2\right).
\label{eq:cond_gamma2}
\end{equation}

\subsubsection*{Step 1. Binding }
The following lemma is the only place where the condition~\eqref{eq:cond_gamma2} is needed.

\begin{lemma}[Binding]\label{lem:binding}
Assume that $\gamma$ and $d$ satisfy~\eqref{eq:cond_gamma2}. For all $N\in\N$, we have
$$L^{(2N)}_{\gamma,d}>L^{(N)}_{\gamma,d}.$$
\end{lemma}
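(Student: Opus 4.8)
The plan is to show that placing two translated copies of an optimizer $V_*$ for $L^{(N)}_{\gamma,d}$ far apart produces a trial potential that strictly beats $L^{(N)}_{\gamma,d}$, and hence $L^{(2N)}_{\gamma,d}>L^{(N)}_{\gamma,d}$. Let $V_*$ be a normalized optimizer for $L^{(N)}_{\gamma,d}$, which exists (we can invoke the bubble decomposition of Theorem~\ref{thm:bubble}, applied to a bubble, or just postpone the existence to Step~2 and run the argument on a near-optimal tight sequence). For $e\in\mathbb{S}^{d-1}$ and $R$ large, set $W_R:=V_*(\cdot+Re)+V_*(\cdot-Re)$. By Theorem~\ref{thm:equation} the eigenfunctions $u_j$ of $-\Delta-V_*$ decay like $e^{-\sqrt{|\lambda_j|}|x|}$ and $V_*$ decays exponentially. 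The key quantitative input is the computation of the \emph{exponentially small interaction}: the first $2N$ eigenvalues of $-\Delta-W_R$ are, to leading order, the pairs $\lambda_j\pm(\text{tunnelling splitting})$, and the relevant quantity when we take the Riesz power $|\cdot|^\gamma$ and sum is the $\gamma$-th power expansion; meanwhile $\int W_R^{\gamma+d/2}=2\int V_*^{\gamma+d/2}+o(1)$ but with a correction coming from the overlap region $\{|x|\approx R\}$.

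The heart of the matter is to get the sign of the leading correction right. I would write the quotient
\[
\frac{\sum_{j=1}^{2N}|\lambda_j(-\Delta-W_R)|^\gamma}{\int_{\R^d}W_R^{\gamma+d/2}}
= L^{(N)}_{\gamma,d}+c_{\gamma,d}\,\varepsilon(R)+o(\varepsilon(R)),
\]
where $\varepsilon(R)>0$ is the natural exponentially small overlap scale (governed by $e^{-2\sqrt{|\lambda_N|}R}$ up to polynomial factors), and the task is to show $c_{\gamma,d}>0$ precisely under $\gamma>\max(0,2-d/2)$. There are two competing contributions of order $\varepsilon(R)$: the change in the numerator coming from the attractive tunnelling of eigenvalues (which tends to \emph{lower} some $\lambda_j$ and raise others — for a sum of $|\lambda_j|^\gamma$ with each pair symmetric around $\lambda_j$, convexity of $t\mapsto|t|^\gamma$ for $\gamma>1$ versus concavity for $\gamma<1$ enters here, but the dominant effect is actually the first-order shift from the cross term $\langle u_j(\cdot-Re),V_*(\cdot+Re)u_j(\cdot-Re)\rangle$, which is \emph{negative} i.e. deepens the well), and the change in the denominator from $\int (V_*(\cdot+Re)+V_*(\cdot-Re))^{\gamma+d/2}-2\int V_*^{\gamma+d/2}$, which for $\gamma+d/2>1$ is \emph{positive} (superadditivity of $t\mapsto t^{\gamma+d/2}$) — so increasing the denominator would \emph{hurt} us. The condition $\gamma>2-d/2$ is exactly what makes the cross-term gain in the numerator dominate the cross-term loss in the denominator, because the former scales like the $L^2$-overlap of eigenfunctions $\sim e^{-2\sqrt{|\lambda_N|}R}$ while the latter scales like $\int V_*(\cdot+Re)^{\theta}V_*(\cdot-Re)^{1}$-type mixed integrals which, given the decay rates $\sqrt{|\lambda_N|}$ of $V_*^{1/(\gamma+d/2-1)}$-ish tails, decay \emph{faster} precisely when $\gamma>2-d/2$. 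I would make this precise by Laplace-method estimates on the overlap integrals along the axis through $\pm Re$.

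Concretely the steps are: (i) recall the decay estimates from Lemma~\ref{lem:decay_opt} for $u_j$, $\nabla u_j$ and $V_*$; (ii) use first-order (Feshbach/Schur or plain perturbation) analysis of $-\Delta-W_R$ restricted to the $2N$-dimensional space spanned by $\{u_j(\cdot\pm Re)\}$ to get $\lambda_j(-\Delta-W_R)=\lambda_j - t_j(R) + o(t_j(R))$ with $t_j(R)>0$ the overlap energy $\int V_*(x+Re)|u_j(x-Re)|^2\,dx$ (two eigenvalues near each $\lambda_j$, the lower one shifted down by roughly $t_j(R)$, controlling the sum from below); (iii) expand $|\lambda_j-t_j|^\gamma = |\lambda_j|^\gamma + \gamma|\lambda_j|^{\gamma-1}t_j + o(t_j)$ and sum to get the numerator $\geq 2L^{(N)}_{\gamma,d} + \gamma\sum_j|\lambda_j|^{\gamma-1}t_j(R)+o(\cdot)$; (iv) estimate $\int W_R^{\gamma+d/2} = 2 + O(\text{overlap})$ where the overlap correction is shown, using $\gamma>2-d/2$ and the explicit exponential rates, to be $o(\sum_j t_j(R))$; (v) divide to conclude the quotient exceeds $L^{(N)}_{\gamma,d}$ for $R$ large, giving a trial potential of rank $\leq 2N$, hence $L^{(2N)}_{\gamma,d}>L^{(N)}_{\gamma,d}$. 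The main obstacle I anticipate is step~(iv) together with the precise matching of decay rates: one must verify that the slowest-decaying piece of $V_*$ (rate $\tfrac{2}{\gamma+d/2-1}\sqrt{|\lambda_N|}$ from Lemma~\ref{lem:decay_opt}) produces a denominator correction that is genuinely subdominant to the numerator's $e^{-2\sqrt{|\lambda_N|}R}$-overlap term — this is where $\tfrac{2}{\gamma+d/2-1}>1$, i.e. $\gamma+d/2<3$... no: one needs the denominator cross term $\int V_*(\cdot+Re)^{a}V_*(\cdot-Re)^{b}$ with $a+b=\gamma+d/2$, $a,b\geq1$, to decay faster than the $L^2$-eigenfunction overlap, and tracking the Laplace exponents shows the borderline is exactly $\gamma=2-d/2$. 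Getting that comparison airtight, and handling the case $\gamma<1$ where $t\mapsto|t|^\gamma$ is concave (so one should be a little careful and use the one lowered eigenvalue per pair rather than averaging), will be the delicate part; the rest is routine perturbation theory and Laplace asymptotics.
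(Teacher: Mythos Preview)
Your trial potential $W_R=V_*(\cdot+Re)+V_*(\cdot-Re)$ does not work, and the paper explicitly warns against it. The issue is that the first-order correction to the quotient vanishes identically. With $q=\gamma+\tfrac{d}{2}$ and the Euler--Lagrange relation $V_*=(\beta\rho)^{p-1}$ where $\rho=\sum_j|\lambda_j|^{\gamma-1}|u_j|^2$ (equivalently $\rho=\beta^{-1}V_*^{q-1}$), your numerator correction from step (iii) is
\[
2\gamma\sum_j|\lambda_j|^{\gamma-1}t_j(R)=2\gamma\int_{\R^d} V^{(+)}\rho^{(-)}=\frac{2\gamma}{\beta}\int_{\R^d} V^{(+)}(V^{(-)})^{q-1},
\]
while the leading denominator correction in step (iv) is $2q\int_{\R^d}(V^{(-)})^{q-1}V^{(+)}$. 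These are proportional to the \emph{same} integral, and since $\gamma/\beta=qL^{(N)}_{\gamma,d}$ by the definition of $\beta$, the net first-order effect on the quotient is exactly zero. This is the Euler--Lagrange equation in disguise: $V_*$ is a critical point of the Lieb--Thirring quotient. So your step (iv), claiming the denominator cross-term is $o\big(\sum_j t_j(R)\big)$, is false; the two contributions have the same exponential rate and cancel, regardless of $\gamma$. Your Laplace-exponent bookkeeping leading to the borderline $\gamma=2-d/2$ cannot be right for this reason.

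The paper's remedy is to use a \emph{nonlinear} trial state built from the densities rather than the potentials, namely $V_R=\big(\beta\rho^{(+)}+\beta\rho^{(-)}\big)^{p-1}$. Then both the denominator correction $A_R=\int[(\beta\rho^{(+)}+\beta\rho^{(-)})^p-(\beta\rho^{(+)})^p-(\beta\rho^{(-)})^p]>0$ and the numerator correction turn out to equal $A_R$ up to explicit constants, giving
\[
\frac{\text{numerator}}{\text{denominator}}\geq L^{(N)}_{\gamma,d}\Big(1+\tfrac12\big(\gamma+\tfrac{d}{2}-1\big)A_R\Big)+O(e_R^2+B_R^2),
\]
with a strictly positive leading coefficient whenever $\gamma+\tfrac{d}{2}>1$. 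The condition $\gamma>2-\tfrac{d}{2}$ (equivalently $p<2$) is \emph{not} about numerator-versus-denominator; it enters only to guarantee that the second-order error terms $e_R^2,B_R^2\sim e^{-2\sqrt{|\lambda_N|}R}$ are dominated by $A_R\sim e^{-p\sqrt{|\lambda_N|}R}$.
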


\begin{proof}
Let $V$ be a non-negative optimizer for $L^{(N)}_{\gamma,d}$, normalized in $L^{\gamma+d/2}(\R^d)$. Let $M$ be the number of negative eigenvalues of $-\Delta-V$ and $N'=\min(N,M)$ as in Theorem~\ref{thm:equation}. If $N'<N$, then $V$ is also an optimizer for $L^{(N')}_{\gamma,d}=L^{(N)}_{\gamma,d}$. If we prove that $L^{(2N')}_{\gamma,d}>L^{(N')}_{\gamma,d}$ then we also get that
$$L^{(2N)}_{\gamma,d}\geq L^{(2N')}_{\gamma,d}>L^{(N')}_{\gamma,d}=L^{(N)}_{\gamma,d}.$$
Upon changing $N$ into $N'$, we may thus assume, without loss of generality, that $N'=N$, that is,
$$\lambda_N<\lambda_{N+1}\leq0.$$

The rest of the proof is very similar to~\cite[Thm.~4]{GonLewNaz-21} and \cite[Thm.~1--2]{FraGonLew-21}. The idea is to place two copies of $V$ far away and compute the exponentially small interaction due to tunneling effects, and show it is attractive under the assumption~\eqref{eq:cond_gamma2} on $\gamma$.

    Let $(\lambda_j, u_j)$ be the corresponding eigenpairs for $- \Delta - V$.     Recall that $p = \frac{\gamma + \frac{d}{2}}{\gamma + \frac{d}{2} - 1}$, so that $\gamma + \frac{d}{2} = \frac{p}{p-1}$. Our condition on $\gamma$ shows that $1 < p < 2$.  Using~\eqref{eq:equation_V_optimal}, we have
    \begin{equation} \label{eq:def:V_beta_rho}
          V = \left( \beta \rho \right)^{p-1}, \ \text{with} \
          \beta := \dfrac{2 \gamma}{L^{(N)}_{\gamma, d} (d + 2 \gamma)}
          \quad \text{and} \quad
          \rho := \sum_{j=1}^N | \lambda_j |^{\gamma - 1} | u_j |^2.
    \end{equation}
    For $R > 0$ large, we introduce the test potential
    \[
        V_R := \left( \beta \rho^{(-)} + \beta \rho^{(+)}  \right)^{p-1},
        \qquad \rho^{(\pm)}(x) := \rho\left(\cR_\pm x \pm \tfrac{R}{2}e_1\right)
    \]
    with $\cR_+,\cR_-\in \text{SO}(d)$ two rotations to be chosen later and $e_1=(1,0,...)$. Note that $V(\cR_\pm x\pm\frac{R}2e_1)$ is also an optimizer of $L^{(N)}_{\gamma,d}$, due to the rotational and translational invariance of our problem. We then use the potential $V_R$ to obtain a lower bound on $L^{(2N)}_{\gamma, d}$, that is
    \begin{equation} \label{eq:lowerBound_L2N}
        L^{(2N)}_{\gamma, d} \ge \dfrac{\sum_{j=1}^{2N} | \lambda_j ( - \Delta - V_R) |^\gamma }{\int_{\R^d} V_R^{\gamma + \frac{d}{2}}}.
    \end{equation}
    Like in~\cite{GonLewNaz-21,FraGonLew-21}, it is important that we take a linear combination of the densities and not of the potentials. The argument does not work with the trial potential $V\big(\cR_+x+\frac{R}2e_1\big)+V\big(\cR_-x-\frac{R}2e_1\big)$.

    To evaluate the leading term in $R$ for the denominator and the numerator of~\eqref{eq:lowerBound_L2N}, we introduce the following quantity
    \begin{align}
        A_R & := \int_{\R^d}  V_R^{\gamma + \frac{d}{2}} - 2 \int_{\R^d} V^{\gamma + \frac{d}{2}} \nn\\
        & = \int_{\R^d} \left\{ \left( \beta \rho^{(-)} + \beta \rho^{(+)} \right)^p -  \left( \beta \rho^{(-)} \right)^p  - \left( \beta \rho^{(+)} \right)^p\right\}.\label{eq:def_A_R}
    \end{align}
    Since $p > 1$ and $\rho>0$ everywhere, we have $A_R > 0$. We will see below that $A_R$ is the leading term in the expansion for both the numerator and the denominator, under the condition~\eqref{eq:cond_gamma2}.

    For the denominator of~\eqref{eq:lowerBound_L2N}, we simply write that
    \[
        \int_{\R^d}  V_R^{\gamma + \frac{d}{2}} = 2 \int_{\R^d} V^{\gamma + \frac{d}{2}} + A_R = 2 + A_R.
    \]
    We now focus on the numerator. Let $P$ be the orthogonal projector onto the $2N$-dimensional space spanned by the $N$ first eigenfunctions of  $-\Delta-(\beta\rho^{(\pm)})^{p-1}$, which we denote by $u_j^{(\pm)}(x)=u_j\big(\cR_\pm x\pm\frac{R}2e_1\big)$. From the min-max principle we have
    $$\lambda_j ( - \Delta - V_R)\leq \lambda_j ( P(- \Delta - V_R)P)$$
    and thus
    \[
         \sum_{j=1}^{2N} | \lambda_j ( - \Delta - V_R) |^\gamma \geq
        \sum_{j=1}^{2N} | \lambda_j \left( P ( - \Delta - V_R) P \right) |^\gamma.
    \]
    We introduce the maximum overlap between the distant functions:
    \[
        e_R := \max_{1 \le i,j \le N} \int_{\R^d} | u_i \big(\cR_-x - \tfrac{R}{2}e_1\big) | \cdot | u_j\big(\cR_+x + \tfrac{R}{2}e_1\big) | \rd x\underset{R\to\ii}{\longrightarrow}0.
    \]
    Let $G$ be the Gram matrix of the $u_j^{(\pm)}$, ordered in the following manner:
    $$\Psi=(\psi^{(+)},\psi^{(-)}):=(u_1^{(+)},...,u_N^{(+)},u_1^{(-)},...,u_N^{(-)}).$$
   Then $G=\int_{\R^d}\Psi^*\Psi$ is, for $R$ large enough, a $2N\times 2N$ positive-definite hermitian matrix of the form
    \begin{equation} \label{eq:def:Gramm}
        G  = \begin{pmatrix}
            \bI_n & E \\ E^* & \bI_n
        \end{pmatrix}
        \quad \text{with} \quad E = O(e_R).
    \end{equation}
The $2N$ new functions defined by the matrix relation
    \[
        \widetilde{\Psi} := \Psi G^{-1/2}
    \]
form an orthonormal system in $L^2(\R^d)$ and span the range of $P$. Thus the negative eigenvalues of $P(-\Delta-V_R)P$ coincide with those of the $2N\times 2N$ matrix
$$ G^{-1/2}\cH G^{-1/2}$$
% =\begin{pmatrix}
% \Lambda&0\\
% 0&\Lambda
% \end{pmatrix}+o(1),$$
with
$$\cH:=\begin{pmatrix}
H^{++}&H^{-+}\\
H^{+-}&H^{--}\\
\end{pmatrix},\qquad H^{\tau\sigma}_{jk}=\pscal{u_j^{(\tau)},(-\Delta-V_R)u_k^{(\sigma)}}.
$$
Using~\eqref{eq:def:Gramm} and the formula $(1 + x)^{-1/2} = 1 - \frac12 x + O(x^2)$ gives
    \[
        G^{-1/2} = \begin{pmatrix}
            \bI_n & 0 \\ 0 & \bI_n
        \end{pmatrix} -\frac12 \begin{pmatrix}
            0 & E \\ E^* & 0
        \end{pmatrix} + O(e_R^2) =: \bI_{2n} + \cE + O(e_R^2).
    \]
   It is convenient to also separate the diagonal and off-diagonal parts of $\cH$:
    \[
\cH= \cD + \cF,\qquad \cD=\begin{pmatrix}
H^{++}&0\\
0&H^{--}\\
\end{pmatrix},\qquad \cF=\begin{pmatrix}
0&H^{-+}\\
H^{+-}&0\\
\end{pmatrix}.
    \]
Using the nonlinear eigenvalue equation for $u_j^{(\pm)}$ and the fact that $V$ is bounded, we find
$\cF=O(e_R).$
Thus,
    \begin{align*}
        G^{-1/2}\cH G^{-1/2} & = (\bI_{2n} + \cE ) (\cD + \cF)(\bI_{2n} + \cE) + O(e_R^2) \\
             & = \cD + \left( \cE \cD + \cD \cE + \cF  \right) + O(e_R^2).
    \end{align*}
and after expanding the trace we obtain
\begin{align*}
\sum_{j=1}^{2N}|\lambda_j(P(-\Delta-V_R)P)|^\gamma&=\tr\big(-G^{-1/2}\cH G^{-1/2}\big)^\gamma \\[-0.3cm]
& =\Tr (- \cD)^\gamma   - \gamma \Tr   (-\cD)^{\gamma-1} \left( \cE \cD + \cD \cE + \cF  \right)  + O(e_R^2)\\
&=\Tr (- \cD)^\gamma  + O(e_R^2)\\
&=\Tr (- H^{++})^\gamma +\Tr (- H^{--})^\gamma + O(e_R^2).
\end{align*}
The term involving $\cE \cD + \cD \cE + \cF $ vanishes since $\cD$ is block-diagonal whereas $\cE$ and $\cF$ are off-diagonal.
Let us evaluate the matrix elements
    \[
        H^{++}_{ij} = \langle u_i^{(+)} , ( - \Delta - V_R) u_j^{(+)} \rangle.
    \]
     Setting $V^{+}(x) := \left( \beta \rho^{(+)}(\cR_+x + \tfrac{R}{2}e_1) \right)^{p-1}$, we have
    \[
        ( - \Delta - V_R) = ( - \Delta - V^{(+)}) + (V_R - V^{(+)} )
    \]
and thus obtain from the eigenvalue equation
    \[
        H^{++}_{ij} = \langle u_i^{(+)} , (- \Delta - V_R), u_j^{(+)} \rangle
         = \lambda_i  \delta_{ij} - \int_{\R^d} \overline{u_i^{(+)}} u_j^{(+)} (V_R - V^{(+)}).
    \]
    The potential $V_R - V^{(+)}$ is exponentially small around $- \tfrac{R}{2}\cR_+^{-1}e_1$, where the functions $u_i^{(+)}$ are the largest.
    Setting
    $$B_R :=\max_{1\leq i,j\leq N}  \int_{\R^d} |u_i^{(+)}|\,|u_j^{(+)}| (V_R - V^{(+)}),$$
    we obtain after expanding
    \begin{align*}
        \Tr( (-H^{++})^\gamma) & = \sum_{i=1}^N | \lambda_i |^\gamma + \gamma \int_{\R^d} \sum_{i=1}^N | \lambda_i |^{\gamma - 1} | u_i^{(+)} |^2 (V_R - V^{(+)}) + O (B_R^2) \\
        & = L^{(N)}_{\gamma, d} + \gamma \int_{\R^d} \rho^{(+)} (V_R - V^{(+)}) + O (B_R^2),
    \end{align*}
    where we recognized the expression of $\rho^{(+)}$ in the last line. Adding the similar term for $H^{--}$,  we end up with
    \begin{align*}
&\sum_{j=1}^{2N}|\lambda_j(P(-\Delta-V_R)P)|^\gamma\\& = 2 L^{(N)}_{\gamma, d} + \gamma \int_{\R^d}  V_R (\rho^{(+)} + \rho^{(-)}) - V^{(+)} \rho^{(+)} -  V^{(-)} \rho^{(-)} + O (B_R^2 + e_R^2) \\
        & = 2 L^{(N)}_{\gamma, d} + \gamma \beta^{p-1} \int_{\R^d} (\rho^{(+)} + \rho^{(-)})^p
         - (\rho^{(+)})^p - (\rho^{(-)})^p + O (B_R^2 + e_R^2) \\
        & = 2 L^{(N)}_{\gamma, d} + \dfrac{\gamma}{\beta} A_R +  O (B_R^2 + e_R^2)
    \end{align*}
    where we recall that $A_R$ is defined in~\eqref{eq:def_A_R}.
    Altogether, we proved that
    \begin{align*}
        L^{(2N)}_{\gamma, d} & \geq \dfrac{2 L^{(N)}_{\gamma, d} + \dfrac{\gamma}{\beta} A_R +  O (B_R^2 + e_R^2)}{2 + A_R} \\
        & = L^{(N)}_{\gamma, d} \left( 1 + \frac{1}{2} \left( \frac{\gamma}{\beta L^{(N)}_{\gamma, d}}  - 1 \right) A_R \right) + O(B_R^2 + e_R^2 + A_R^2).
    \end{align*}
    Together with the formula of $\beta$ in~\eqref{eq:def:V_beta_rho}, we get our final bound
    \begin{equation}
    \boxed{ L^{(2N)}_{\gamma, d} \geq L^{(N)}_{\gamma, d} \left( 1 + \frac{1}{2} \left( \frac{d}{2} + \gamma - 1 \right) A_R \right) + O\big(B_R^2 + e_R^2 + A_R^2\big).}
    \label{eq:final_estim_2N}
    \end{equation}
    The term $d/2 + \gamma - 1$ is always positive. Our assumption~\eqref{eq:cond_gamma2} on $\gamma$ is rather used to ensure that $B_R^2+e_R^2=o(A_R)$.
    The next lemma can be proved following the lines of~\cite[Lemma 21]{GonLewNaz-21}. It uses the pointwise bounds (averaged over rotations) of Lemma~\ref{lem:decay_opt}, together with the inequality $\lambda_1 < \lambda_2 \le \cdots \le \lambda_N < 0$.

    \begin{lemma}[Exponentially small corrections~\cite{GonLewNaz-21}]
        For every $R$ large enough, we have
        \[
            e_R  \le C R^{\frac{3-d}{2}} \exp \left( - \sqrt{| \lambda_N |} R \right),
         \]
         and
         \[
            B_R\le C R^{3-d} \exp \left( - 2 \sqrt{| \lambda_N | } R \right)
        \]
        uniformly in the rotations $\cR_+,\cR_-\in{\rm SO}(d)$. There exist rotations $\cR_+,\cR_-\in{\rm SO}(d)$ so that
        \[
            A_R \ge \frac{1}{C} R^{-2p(d-1)} \exp \left( - \sqrt{| \lambda_N | } p R \right).
         \]
         In particular, if $\gamma$ and $d$ satisfy~\eqref{eq:cond_gamma2},  we have $1<p<2$ and thus $e_R^2 = o(A_R)$ and $B_R  = O(A_R)$.
    \end{lemma}

    The lemma implies that the leading correction in~\eqref{eq:final_estim_2N} is $A_R$ and is positive. This concludes the proof of Lemma~\ref{lem:binding}
\end{proof}

\subsubsection*{Step 2. Compactness and existence}
Let $(V_n)$ be a normalized maximizing sequence for $L^{(N)}_{\gamma,d}$. Should it be non compact modulo translations, then by Theorem~\ref{thm:bubble} we know that there exist $K\geq2$ bubbles $V^{(k)}$ which are maximizers for
$$L^{(N_k)}_{\gamma,d}=L^{(N)}_{\gamma,d}.$$
Without loss of generality, we may assume that the $N_k$ are arranged in increasing order:
$$N_1\leq N_2\leq\cdots \leq N_K.$$
The smallest bubble satisfies
$$N=\sum_{k=1}^KN_k\geq KN_1$$
and thus $N_1\leq N/K\leq N/2$ since $K\geq2$. But then, from Lemma~\ref{lem:binding}, we deduce from the existence of an optimizer for $L^{(N_1)}_{\gamma,d}$ that
$$L^{(N)}_{\gamma,d}=L^{(N_1)}_{\gamma,d}<L^{(2N_1)}_{\gamma,d}\leq L^{(N)}_{\gamma,d},$$
a contradiction. This proves that $K=1$ and all the maximizing sequences must be compact modulo translations. This concludes the proof of Theorem~\ref{thm:existence}.\qed

%%%%%%%%%%%%%%%%%%%%%%%%%%%%%%%%%%%%%%%%%%%%%%%%%%%%%%%%%%%
%%%%%%%%%%%%%%%%%%%%%%%%%%%%%%%%%%%%%%%%%%%%%%%%%%%%%%%%%%%
\section{Proof of Theorem~\ref{thm:3/2} (integrable case $\gamma=3/2$ in $d=1$)}\label{sec:proof_KdV}
%%%%%%%%%%%%%%%%%%%%%%%%%%%%%%%%%%%%%%%%%%%%%%%%%%%%%%%%%%%
%%%%%%%%%%%%%%%%%%%%%%%%%%%%%%%%%%%%%%%%%%%%%%%%%%%%%%%%%%%

It is well known that the Lieb-Thirring inequality at $\gamma=3/2$ follows from trace formulas~\cite{GarGreeKruMiu-74,LieThi-76}. Those also provide the case of equality, using results on the inverse scattering problem~\cite{KayMos-56,DeiTru-79}. We recall the notation $Q_{\vec\beta,\vec{X}}$ in~\eqref{eq:KdV-N-soliton}.

\begin{lemma}[Lieb-Thirring optimizers are KdV solitons]\label{lem:LT=KdV}
Let $V\geq0$ be a normalized optimizer for $L^{(N)}_{3/2,1}$ such that $-\Delta-V$ has exactly $N$ negative eigenvalues. Then there exists $\vec\beta=(\beta_1,...,\beta_N)$ with $\beta_1>\cdots >\beta_N>0$ and $\sum_{j=1}^N\beta^3_j=3/16$, together with $\vec X\in \R^N$ so that
$$V=Q_{\vec\beta,\vec{X}}.$$
\end{lemma}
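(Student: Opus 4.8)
The plan is to identify the optimizers through the inverse scattering transform, exploiting the classical fact that in one dimension at $\gamma=3/2$ the Lieb-Thirring inequality becomes an equality precisely for reflectionless potentials. The relevant machinery is in~\cite{GarGreeKruMiu-74,LieThi-76,KayMos-56,DeiTru-79}.

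First I would observe that a normalized optimizer $V$ with exactly $N$ negative eigenvalues saturates the \emph{full} (infinite-rank) Lieb-Thirring inequality: the infinite Riesz sum collapses to the finite one because $\lambda_j(-\Delta-V)=0$ for $j>N$, so that by~\eqref{eq:gamma3/2},
\[
\sum_{j\geq1}|\lambda_j(-\Delta-V)|^{3/2}=\sum_{j=1}^N|\lambda_j(-\Delta-V)|^{3/2}=L^{(N)}_{3/2,1}\int_\R V^2=\frac3{16}\int_\R V^2 .
\]
Next, by Lemma~\ref{lem:decay_opt} the potential $V$ is real-analytic and decays exponentially, hence it lies in the Faddeev class and has well-defined scattering data for $-\Delta-V$: a reflection coefficient $r(k)$, the eigenvalues $\lambda_j=-\beta_j^2$ with $\beta_1>\cdots>\beta_N>0$, and positive norming constants $c_1,\dots,c_N$. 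I would then invoke the first KdV trace identity~\cite{GarGreeKruMiu-74,LieThi-76}, which expresses $\tfrac3{16}\int_\R V^2$ as $\sum_{j=1}^N\beta_j^3$ plus a manifestly non-negative functional of $|r|$ that vanishes if and only if $r\equiv0$; comparing with the displayed equality forces this functional to vanish, i.e. $V$ is reflectionless. As a consistency check, note that for $\gamma=3/2$, $d=1$ the exponent $\tfrac1{\gamma+d/2-1}$ equals $1$ and the Euler--Lagrange relation~\eqref{eq:equation_V_optimal} becomes the linear-in-densities identity $V=4\sum_{j=1}^N|\lambda_j|^{1/2}|u_j|^2$, which is exactly the known representation of an $N$-soliton through its $L^2$-normalized bound states; but this relation alone does not pin $V$ down, which is why the trace identity is needed.

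Then I would reconstruct $V$ from its scattering data. A reflectionless real potential in the Faddeev class is uniquely determined by the bound-state data $(\{-\beta_j^2\},\{c_j\})$ via the Gelfand--Levitan--Marchenko equation, whose solution in this case is precisely the $N$-soliton profile $Q_{\vec\beta,\vec X}$ of~\eqref{eq:KdV-N-soliton}, with $\vec\beta=(\beta_1,\dots,\beta_N)$ and $\vec X\in\R^N$ in bijection with the norming constants; see~\cite{KayMos-56,DeiTru-79} and the spectral identity~\eqref{eq:spectrum_KdV}. Hence $V=Q_{\vec\beta,\vec X}$ with $\beta_j=\sqrt{|\lambda_j|}$. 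Finally, the normalization $\int_\R V^2=1$ together with $\int_\R Q_{\vec\beta,\vec X}^2=\tfrac{16}3\sum_j\beta_j^3$ yields $\sum_{j=1}^N\beta_j^3=\tfrac3{16}$, which is exactly the constraint in~\eqref{eq:KdV-manifold}, completing the argument.

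The main obstacle is not computational but bookkeeping: one must verify that the hypotheses of both the trace identity (with a remainder genuinely non-negative and expressed through $r$) and of the inverse scattering reconstruction are satisfied by a potential that is only known \emph{a priori} to be an optimizer. This is where the regularity and exponential decay supplied by Theorem~\ref{thm:equation} and Lemma~\ref{lem:decay_opt} are essential, and where one should cite the precise statements in~\cite{GarGreeKruMiu-74,LieThi-76,KayMos-56,DeiTru-79} rather than reprove them.
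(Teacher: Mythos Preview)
Your proposal is correct and follows essentially the same route as the paper: use the regularity and exponential decay from Theorem~\ref{thm:equation} to justify applying the trace formula, read off from the equality case that the reflection coefficient vanishes (equivalently, in the paper's formulation, that the Jost amplitude satisfies $a(k)\equiv1$), and then invoke the inverse scattering characterization of reflectionless potentials~\cite{KayMos-56,DeiTru-79} to identify $V$ with a KdV $N$-soliton. The Euler--Lagrange consistency check and the explicit normalization step you include are correct additions but not needed for the argument.
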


\begin{proof}
By Theorem~\ref{thm:equation} we know that $V$ is real-analytic and decays exponentially at infinity. For such a potential we can apply the trace formula~\cite[Chap.~5, Thm.~3.6]{Yafaev-10} and obtain
$$\sum_{j=1}^N|\lambda_j|^{\frac32}+\frac3\pi \int_\R\log(a(k))\,k^2\,\rd k=\frac3{16}\int_\R V(x)^2\,\rd x$$
where $a(k)\geq1$ is the amplitude of the Jost function of $-d^2/dx^2-V$. Thus for an optimizer we must have $a(k)\equiv1$. By~\cite{KayMos-56,DeiTru-79}, the only fastly-decaying potentials satisfying this property are the KdV $N$-solitons with $\beta_j:=|\lambda_j|^{1/2}$.
\end{proof}

Let us now consider any normalized maximizing sequence $(V_n)$ for $L^{(N)}_{3/2,1}$ and prove that it approaches the manifold $\cM^N$ of normalized KdV $N$-solitons. Let $V^{(1)},...,V^{(K)}$ be the bubbles obtained from Theorem~\ref{thm:bubble} after extraction of a subsequence. Let $x_n^{(k)}$ be the corresponding space translations. Each $V^{(k)}$ is a non-trivial optimizer for $L^{(N_k)}_{3/2,1}$, hence satisfies the properties of Theorem~\ref{thm:equation}. In particular, $V^{(k)}$ decreases exponentially fast at infinity. Since $L^{(N_k)}_{3/2,1}=L^{(N_k+1)}_{3/2,1}=\frac3{16}$, we must always have $\lambda_{N_k+1}(-\Delta-V^{(k)})=0$, even in the case that $N_k=N$. The $k$th bubble can never have more than $N_k$ negative eigenvalues. It can however have less and we denote by $N_k'$ the number of its negative eigenvalues, counted with multiplicity. From Lemma~\ref{lem:LT=KdV} and after rescaling, we know that $V^{(k)}=Q_{\alpha_k\vec{\beta_k},\vec{X_k}}$ where $\alpha_k:=\left(\int_\R V^{(k)}(x)^2\,\rd x\right)^{1/3}>0$ and $\vec\beta_k\in(0,\ii)^{N'_k}$ with $\sum_j\beta_{k,j}^3=3/16$. In addition, $\sum_k\alpha_k^3=1$.

We have shown that $V_n$ decomposes as a sum of solitons moving away from each other. Those also occur in the manifold $\cM^N$. In fact, by~\cite[Prop.~3.1]{KilVis-20_ppt}, there exists $Q_{\vec{\beta},\vec{Y_n}}$ with $\vec{\beta}=\cup\vec{\beta_k}$ so that
$$\lim_{n\to\ii}\bigg\|Q_{\vec{\beta},\vec{Y_n}}-\sum_{k=1}^K\underbrace{Q_{\alpha_k\vec{\beta_k},\vec{X_k}}}_{V^{(k)}}(\cdot-x_n^{(k)})\bigg\|_{L^2(\R)}=0$$
and thus we obtain
$$\lim_{n\to\ii}\Big\|Q_{\vec{\beta},\vec{Y_n}}-V_n\Big\|_{L^2(\R)}=0.$$
This proves that the $L^2$ distance of $V_n$ to $\cM^N$ (hence to its closure $\cM^{\leq N}$) tends to 0. We have extracted a subsequence at the beginning of the argument but, since the limit holds for any subsequence, the same property must hold for the whole initial sequence $(V_n)$. This concludes the proof of Theorem~\ref{thm:3/2}.\qed

%%%%%%%%%%%%%%%%%%%%%%%%%%%%%%%%%%%%%%%%%%%%%%%%%%%%%%%%%%%
%%%%%%%%%%%%%%%%%%%%%%%%%%%%%%%%%%%%%%%%%%%%%%%%%%%%%%%%%%%
\section{Proof of Theorem~\ref{thm:equation_critical} (equation and consequences for CLR)}\label{sec:proof_equation_critical}
%%%%%%%%%%%%%%%%%%%%%%%%%%%%%%%%%%%%%%%%%%%%%%%%%%%%%%%%%%%
%%%%%%%%%%%%%%%%%%%%%%%%%%%%%%%%%%%%%%%%%%%%%%%%%%%%%%%%%%%

\subsubsection*{Step 1. Euler-Lagrange equation}
The proof is very similar to that of the subcritical case $\gamma>0$ in Section~\ref{sec:proof_equation}, except that we argue on the Birman-Schwinger operator instead of the Schrödinger operator $-\Delta-V_*$.\footnote{Another possibility would be to pass to the sphere using a stereographic projection~\cite{LieLos-01} in order to remove the essential spectrum.} For $\chi\in L^{d/2}(\R^d)$ we use the fact that
$$\eps\mapsto \frac{\mu_N(V_*+\eps\chi)^{\frac{d}2}}{\int_{\R^d}(V_*(x)+\eps\chi(x))_+^{\frac{d}2}\,\rd x}$$
attains its maximum at $\eps=0$. Similar to the case $\gamma>0$, let $m$ be the multiplicity of $\mu_N(V_*)=1$ and $K$ be so that
$$1=\mu_N(V_*)=\mu_{N-K+1}(V_*)<\mu_{N-K}(V_*).$$
Let $v_1,...,v_m$ be an orthonormal basis of $\ker(K_{V_*}-1)$ where we recall that $K_{V_*}=(-\Delta)^{-1/2}V_*(-\Delta)^{-1/2}$. Then the functions
$$u_i:=\frac1{\sqrt{-\Delta}}v_i$$
span the space
$$\cN_*=\left\{f\in \dot{H}^1(\R^d)\ :\ (-\Delta-V)f=0\right\}$$
of zero-energy resonances/eigenfunctions and form an orthonormal system in $\dot{H}^1(\R^d)$. For $\eps$ small enough the operator $K_{V_*+\eps\chi}$ has exactly $m$ eigenvalues in the neighborhood of $1$, which form $m$ smooth curves intersecting at $\eps=0$. They behave to leading order like
$$1+\eps\mu_j^\chi+o(\eps),$$
where $\mu_j^\chi$ are the $m$ eigenvalues of the $m\times m$ matrix
$$M_{ij}=\pscal{u_i,\chi u_j}_{L^2},$$
which we order increasingly.
This is the restriction of the perturbation $(-\Delta)^{-\frac12}\chi(-\Delta)^{-\frac12}$ to the eigenspace $\ker(K_{V_*}-1)=\text{span}(v_1,...,v_m)$. To leading order, $\mu_N(V_*+\eps\chi)$ is thus given by
$$\mu_N(V_*+\eps\chi)=1+\begin{cases}
\eps \mu_{m-K+1}^\chi+o(\eps)&\text{if $\eps>0$,}\\
\eps \mu_{K}^\chi+o(\eps)&\text{if $\eps<0$,}
\end{cases}$$
and the first order condition yields
\begin{equation}
\boxed{ \mu_{m-K+1}^\chi\leq \frac{\ell^{(N)}_{0,d}}{N}\int_{\R^d}V_*^{\frac{d-2}2}\chi\leq \mu_K^\chi.}
 \label{eq:compare_eigenvalues}
\end{equation}
For the moment we will not use that this involves the two eigenvalues $\mu_{m-K+1}^\chi$ and $\mu_{K}^\chi$ and defer this discussion to the last step of the proof. We simply bound
$\mu_{m-K+1}^\chi\geq \mu_1^\chi$ and $\mu_{K}^\chi\leq \mu_{m}^\chi$
and obtain that $\ell^{(N)}_{0,d}/N\int_{\R^d}V^{\frac{d}2-1}\chi$ belongs to the range of the quadratic form $u\in\cN_*\mapsto \pscal{u,\chi u}_{L^2}$ restricted to the unit sphere of $\dot{H}^1(\R^d)$. Thus for every $\chi\in L^{d/2}(\R^d)$ we deduce that there exists a $u_\chi\in \cN_*$ with $\|\nabla u_\chi\|=1$ so that
\begin{equation}
 \int_{\R^d}\left(\frac{\ell^{(N)}_{0,d}}{N}V_*^{\frac{d-2}2}-|u_\chi|^2\right)\chi=0.
 \label{eq:non_separating_hyperplane}
\end{equation}
We claim that this implies
\begin{equation}
\frac{\ell^{(N)}_{0,d}}{N}V_*^{\frac{d-2}2}\in {\rm Conv}\big\{|u|^2,\
u\in\cN_*,\ \|\nabla u\|_{L^2(\R^d)}=1\big\}=:\cK,
\label{eq:equation_critical_convex}
\end{equation}
the convex hull of the squares of normalized zero modes. To prove this, we argue by contradiction following an argument of~\cite{Nadirashvili-96}. If $V_*^{d/2-1}\ell^{(N)}_{0,d}/N$ does \emph{not} belong to the closed convex set $\cK$ on the right of~\eqref{eq:equation_critical_convex}, by the Hahn-Banach theorem in $L^{d/(d-2)}(\R^d)$~\cite[Thm.~1.7]{Brezis-11}, there exists a strictly separating hyperplane, namely a $\chi\in L^{d/2}(\R^d)$ and an $\alpha\in\R$ such that
$$\frac{\ell^{(N)}_{0,d}}{N}\int_{\R^d}V_*^{\frac{d}2-1}\chi>\alpha,\qquad \int_{\R^d}\rho \chi<\alpha,\quad \forall \rho\in\cK.$$
Taking $\rho=|u_\chi|^2$ and subtracting the two inequalities provides a contradiction to~\eqref{eq:non_separating_hyperplane}. Thus we have shown~\eqref{eq:equation_critical_convex}. The space $\cN_*$ being finite-dimensional, $\cK$ is also included in a finite-dimensional space (e.g. generated by the $\overline{u_j}u_i$ for $1\leq i,j\leq m$). By Caratheodory's theorem~\cite{Eggleston}, there exist \emph{finitely many} zero modes $w_j\in \cN_*$ so that
$$V_*=\left(\frac{N}{\ell^{(N)}_{0,d}}\sum_j |w_j|^2\right)^{\frac2{d-2}},\qquad \sum_j\int_{\R^d}|\nabla w_j|^2=1.$$
Now we may consider the Gram matrix $G_{ij}:=\pscal{\nabla w_i,\nabla w_j}_{L^2}$ and diagonalize it in the form $G=U^*{\rm diag}(\alpha_1,...,\alpha_m)U$ with a unitary $U$.
Taking $f_j=\sum_{k}U_{jk}w_k$ (which are orthogonal in $\dot{H^1}(\R^d)$) and $g_j=f_j/\sqrt{\alpha_j}$ (for $\alpha_j\neq0$ which form a basis of $\cN_*$ orthonormal in $\dot{H}^1(\R^d)$), we obtain
\begin{multline}
V_*=\left(\frac{N}{\ell^{(N)}_{0,d}}\sum_{j=1}^m |f_j|^2\right)^{\frac2{d-2}}=\left(\frac{N}{\ell^{(N)}_{0,d}}\sum_{j=1}^m \alpha_j |g_j|^2\right)^{\frac2{d-2}},\\
\sum_{j=1}^m\alpha_j=\sum_{j=1}^m\int_{\R^d}|\nabla f_j|^2=1.
\label{eq:equation_critical_proof}
\end{multline}
In the following it will be convenient to use either the functions $f_j$ or $g_j$, depending on the context. Note that condition~\eqref{eq:compare_eigenvalues} remains valid in the new basis $g_j$, with $\mu_n^\chi$ the eigenvalues of the new $m\times m$ matrix $\pscal{g_i,\chi g_j}_{L^2}$.

\subsubsection*{Step 2. Regularity}
We start with the following lemma dealing with linear solutions.

\begin{lemma}[Linear case]\label{lem:1st_iteration_regularity}
Let $d\geq3$ and $V\in L^{d/2}(\R^d,\R)$. Any $f\in\dot{H}^1(\R^d)$ so that $(-\Delta-V)f=0$ belongs to $L^p(\R^d)$ for all $\frac{d}{d-2}< p<\ii$.
\end{lemma}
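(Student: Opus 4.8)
The starting point is the Sobolev embedding $\dot{H}^1(\R^d)\hookrightarrow L^{2^*}(\R^d)$ with $2^*=\tfrac{2d}{d-2}$, so that $f\in L^{2^*}$; since $2^*>\tfrac d{d-2}$, the conclusion holds at the single exponent $p=2^*$, and the task is to propagate integrability upward, to all $p\in[2^*,\ii)$, and downward, to all $p\in(\tfrac d{d-2},2^*)$. Throughout I would work with the integral form $f=(-\Delta)^{-1}(Vf)=I_2(Vf)$, where $I_2$ is the Riesz potential of order $2$; this is legitimate because $Vf\in L^{(2^*)'}(\R^d)\hookrightarrow\dot{H}^{-1}(\R^d)$ by Hölder and Sobolev duality, and $-\Delta$ is an isomorphism $\dot{H}^1\to\dot{H}^{-1}$.

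The mechanism is a Brezis--Kato type scheme resting on the splitting $V=V\1_{\{|V|\le L\}}+V\1_{\{|V|>L\}}$, with $\|V\1_{\{|V|>L\}}\|_{L^{d/2}}\to 0$ as $L\to\ii$, combined with the Hardy--Littlewood--Sobolev mapping properties of $I_2$. For the upward step one tests the equation $-\Delta f=Vf$ against the truncated powers $f\,(\delta+\min(|f|^2,M^2))^{\beta}$ with $\beta\ge0$: the bounded part $V\1_{\{|V|\le L\}}\in L^{d/2}\cap L^\infty$ contributes a genuine gain of integrability (each step multiplies the exponent by $\tfrac d{d-2}>1$, starting from $2^*$), while the Sobolev inequality controls the tail term $\int V\1_{\{|V|>L\}}|f|^2(\delta+\min(|f|^2,M^2))^{\beta}$ by $C\|V\1_{\{|V|>L\}}\|_{L^{d/2}}$ times the quantity being estimated, which is absorbed for $L$ large; letting $M\to\ii$ and $\delta\to0$ then gives $f\in L^p$ for all $p\in[2^*,\ii)$. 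For the downward step one instead uses the integral representation and cuts $V$ in space, $f=I_2(V\1_{B_R}f)+I_2(V\1_{B_R^c}f)$. Since $V\1_{B_R}f\in L^1\cap L^{(2^*)'}$ on any ball (the $L^1$ bound uses $f\in L^{2^*}\subset L^{(d/2)'}_{\rm loc}$), Hardy--Littlewood--Sobolev and interpolation give $I_2(V\1_{B_R}f)\in L^p$ for every $p\in(\tfrac d{d-2},2^*]$; the far piece satisfies $\|I_2(V\1_{B_R^c}f)\|_{L^p}\le C\|V\1_{B_R^c}\|_{L^{d/2}}\|f\|_{L^p}$, with the same exponent $p$ on both sides, and is absorbed for $R$ large, the customary truncation of $f$ being used to ensure the $L^p$ norm is a priori finite before absorbing. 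Iterating finitely many times covers the whole range $(\tfrac d{d-2},\ii)$.

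The point requiring care --- and the reason the direct Hölder--Sobolev iteration of the subcritical case (Section~\ref{sec:proof_equation}) does not apply here --- is precisely the criticality of the exponent $d/2$: the chain $V\in L^{d/2}$, $f\in L^p\Rightarrow Vf\in L^{r}\Rightarrow I_2(Vf)\in L^p$ is scale invariant and yields no gain whatsoever. This is resolved exactly by the smallness of the $L^{d/2}$ tail of $V$, which must be paired with the honest gain coming from the bounded, respectively compactly supported, part of $V$; everything else is bookkeeping of Hölder and HLS exponents together with the standard truncation arguments that make the absorption steps rigorous. Alternatively one could transplant the problem to $\bS^d$ by stereographic projection, turning $-\Delta$ into an operator with compact resolvent, but the argument would be essentially unchanged.
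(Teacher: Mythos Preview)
Your proof is correct. The upward step is exactly the paper's argument: Moser iteration testing against truncated powers, splitting $V$ by amplitude and absorbing the $L^{d/2}$-small tail $V\1_{\{|V|>L\}}$ into the Sobolev term. For the downward step the paper keeps the same mechanism with $0<\alpha<1$, this time absorbing the \emph{small}-amplitude part $V\1_{\{|V|\le M\}}$ (whose $L^{d/2}$ norm vanishes as $M\to0$) and controlling the large-amplitude part, which lies in $L^q$ for every $q<d/2$ on its finite-measure support, against $\|f\|_{L^{2^*}}^{1+\alpha}$. You instead pass to the integral equation $f=I_2(Vf)$ with a \emph{spatial} splitting of $V$: HLS on the compactly supported piece yields $I_2(V\1_{B_R}f)\in L^p$ for all $p\in(\tfrac d{d-2},2^*]$ in one shot, and the far piece is absorbed. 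Both routes exploit smallness of an $L^{d/2}$ tail of $V$; the paper's is symmetric with the upward step and stays purely differential, while yours is a one-shot argument once the Riesz potential is invoked. One caution: your ``customary truncation of $f$'' is imprecise for the downward absorption, since a height truncation does not by itself place $f$ in $L^p$ for $p<2^*$; the clean justification is a Neumann-series argument, noting that $T_Rg:=I_2(V\1_{B_R^c}g)$ has small norm on both $L^{2^*}$ and $L^p$ for $R$ large, so the series for $(I-T_R)^{-1}I_2(V\1_{B_R}f)$ converges to $f$ in both spaces.
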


From the example of the Sobolev optimizer $f(x)=(1+|x|^2)^{-\frac{d-2}2}$ and its associated potential $V(x)=d(d-2)(1+|x|^2)^{-2}$, we know that the lower bound $p>\frac{d}{d-2}$ is optimal. In fact we will show later in Step~3 that an optimizer $V_*$ for $\ell^{(N)}_{0,d}$ has the same behavior $|x|^{-4}$ at infinity.

\begin{proof}[Proof of Lemma~\ref{lem:1st_iteration_regularity}]
This is a classical result (see, e.g.,~\cite{GilbargTrudinger,BreKat-79,BreLie-84} for similar statements) but we provide a quick outline of the proof for the convenience of the reader. Taking the real and imaginary parts, we can assume that $f$ is real-valued. If we formally multiply the equation by $|f|^{\alpha-1} f$ with $\alpha\geq1$ and integrate, we obtain
$$\frac{4}{(1+\alpha)^2}\int_{\R^d}|\nabla |f|^{\frac{1+\alpha}{2}}|^2= \int V|f|^{1+\alpha}.$$
We decompose $V=V\1(|V|>M)+V\1(|V|\leq M)$ and use Hölder's inequality for the first term and just $|V|\leq M$ for the second.
Using the Sobolev inequality for the gradient, we obtain
\begin{multline}
 \left(\frac{4}{(1+\alpha)^2}S_d^{-1}-\norm{V\1(|V|>M)}_{L^{\frac{d}2}}\right)\left(\int_{\R^d}|f|^{\frac{d(1+\alpha)}{d-2}}\right)^{\frac{d-2}{d}} \leq M \int |f|^{1+\alpha}.
 \label{eq:Moser}
\end{multline}
We take $M$ large enough so that the left side is positive. We conclude that if $f\in L^{p}(\R^d)$ then $f\in L^{q}(\R^d)$ with $q=\frac{d}{d-2}p$. Iterating this procedure starting at $2d/(d-2)$ we obtain the sequence of exponents $p_n=2(\frac{d}{d-2})^{n+1}$ which diverges to infinity since $d/(d-2)>1$. This proves that $f\in L^p(\R^d)$ for all $\frac{2d}{d-2} \leq p<\ii$. Note that at each step we need to adjust $M$ appropriately, depending on $\alpha$ and $V$. Under our sole assumption that $V\in L^{d/2}(\R^d)$, this cannot be made more quantitative.

In reality we cannot really multiply by $|f|^{\alpha-1} f$ since we do not yet know that $f\in L^{\frac{d(1+\alpha)}{d-2}}$. The solution to this technical problem is to rather multiply by an $H(f(x))$ where $H(t)$ behaves like $|t|^{\alpha-1} t$ close to the origin and grows slower at infinity. Taking a limit we obtain both that $f\in L^{\frac{d(1+\alpha)}{d-2}}$ and that~\eqref{eq:Moser} holds. We refer to~\cite[Sec.~8.5]{GilbargTrudinger} for the details.

It is also possible to go backwards. This time we estimate
\begin{multline*}
\int V|f|^{1+\alpha}\leq \norm{V\1(|V|\leq M)}_{L^{\frac{d}2}}\left(\int_{\R^d}|f|^{\frac{d(1+\alpha)}{d-2}}\right)^{\frac{d-2}{d}}\\
+\norm{V\1(|V|> M)}_{L^{\frac{2d}{d+2-\alpha(d-2)}}}\norm{f}_{L^{\frac{2d}{d-2}}}^{1+\alpha}.
\end{multline*}
We have $2d/(d+2-\alpha(d-2))\leq d/2$ for $\alpha\leq 1$ hence the norm of $V\1(|V|> M)$ on the right side is finite. We take $0<\alpha<1$ and $M$ small. In the limit $\alpha\to0^+$ we obtain that $f\in L^p(\R^d)$ for $p>\frac{d}{d-2} $.
\end{proof}

Applying Lemma~\ref{lem:1st_iteration_regularity} to the $g_j$'s in~\eqref{eq:equation_critical_proof} we obtain that
$$V_*\in L^p(\R^d),\qquad \text{for all}\quad \begin{cases}
\frac{d}{4}<p<\ii&\text{for $d\geq4$,}\\
1\leq p<\ii & \text{for $d=3$.}
\end{cases}$$
Using this information we can now deduce that the $g_j$ are in $C^{1,\alpha}(\R^d)$ for all $0\leq\alpha<1$ and tend to 0 at infinity. This follows immediately from the fact that
$$g_j=(-\Delta)^{-1}(V_*g_j)=\frac{c_d}{|x|^{d-2}}\ast (V_*g_j)$$
and that $V_*g_j\in L^p(\R^d)$ for all $1\leq p<\ii$, see~\cite[Thm.~10.2]{LieLos-01}. Then $V_*$ is at least continuous and bounded. From unique continuation the $g_j$ cannot vanish on a set of positive measure,\footnote{Otherwise, by~\cite{FigGos-92} there would be a point at which the function vanishes to infinite order, a situation which cannot happen by, e.g.,~\cite{Aronszajn-57,Hormander-83}. Using~\cite{HarSim-89} we can even deduce that $\{ f_j =0 \}$ has locally finite $(d-1)$-dimensional Hausdorff measure.} hence $V_*>0$ almost everywhere. In dimensions $d\in\{3,4\}$ the power $2/(d-2)$ in~\eqref{eq:equation_critical} is an integer so the nonlinear equations~\eqref{eq:NLS_critical} are polynomial. From usual regularity theory~\cite{Morrey-58} the functions $f_j$ (hence $V_*$ and $g_j$) are real-analytic. In dimensions $d\geq5$ the power is not an integer and the best we can say is that $V_*$ satisfies~\eqref{eq:regularity_CLR}. We only provide the proof for $d\geq 6$, the argument being similar for $d=5$. Since $g_j \in C^{1,\alpha}(\R^d)$ for any $\alpha<1$, we have $V_* \in C^{0,\beta}(\R^d)$ for every $\beta<4/(d-2)$. To prove the bound for $\beta = 4/(d-2)$, we use first the algebra property of $C^{0,\beta}(\R^d)$ spaces to deduce that $V_* g_j \in C^{0,\beta}(\R^d)$ for every $\beta<4/(d-2)$. Thus, by the equation and Schauder's theory, $g_j \in C^{2,\beta}(\R^d)$ for every $\beta<4/(d-2)$. In particular, $g_j \in C^{1,1}(\R^d)$ and this implies that $V_* \in C^{0,\beta}(\R^d)$ with $\beta=4/(d-2)$, as claimed. On the other hand, we still obtain that $V_*$ and the $g_j$'s are real-analytic on the open set $\{V_*>0\}$.

\subsubsection*{Step 3. Decay}
To prove that an optimizer $V_*(x)$ behaves like $|x|^{-4}$ at infinity, we use the following lemma, which relies on the conformal invariance of our problem (reminiscent of that of the Sobolev inequality~\cite{LieLos-01}).

\begin{lemma}\label{lem:transfo_sphere1}
For $f\in \dot H^1(\R^d)$ and $0\leq V\in L^{d/2}(\R^d)$, we define
$$
\tilde f(x) := |x|^{-d+2} \, f(|x|^{-2}x) \,,\qquad W(x) := |x|^{-4} V(|x|^{-2}x) \,,\quad \forall x\in\R^d\setminus\{0\} \,.
$$
Then we have $\tilde f\in \dot{H}^1(\R^d)$ and $W\in L^{d/2}(\R^d)$ with
$$\int_{\R^d}|\nabla\tilde f|^2\,\rd x =\int_{\R^d}|\nabla f|^2\,\rd x ,\qquad \int_{\R^d}W^{\frac{d}2}\,\rd x =\int_{\R^d}V^{\frac{d}2}\,\rd x $$
and
$$\mu_j(W)=\mu_j(V)\qquad\text{for all $j\geq1$. }$$
If in addition $-\Delta f = V f$, then $-\Delta \tilde f=W\tilde f$.
\end{lemma}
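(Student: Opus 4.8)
The statement is the conformal covariance of the Laplacian under the Kelvin transform, and the plan is to reduce everything to the pointwise intertwining identity
\[
\Delta\tilde f(x)=|x|^{-d-2}(\Delta f)\big(|x|^{-2}x\big),\qquad x\in\R^d\setminus\{0\},
\]
valid for every $f\in C^\infty(\R^d\setminus\{0\})$. I would first establish it by direct differentiation: writing $I(x):=|x|^{-2}x$ one has $DI(x)=|x|^{-2}R(x)$ with $R(x)=\mathrm{Id}-2\,\widehat{x}\otimes\widehat{x}$ an orthogonal matrix, and differentiating $\tilde f=|x|^{2-d}(f\circ I)$ twice, using $|I(x)|=|x|^{-1}$ and $\widehat{I(x)}=\widehat{x}$, produces the claimed formula after the lower-order terms cancel (this can also be quoted from~\cite[Ch.~4]{LieLos-01}). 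Two elementary facts will be used repeatedly: under the substitution $y=I(x)$ one has $\rd y=|x|^{-2d}\rd x$, and $I$ maps annuli to annuli, so that $f\mapsto\tilde f$ and $V\mapsto W$ preserve the class $C^\infty_c(\R^d\setminus\{0\})$; moreover $f\mapsto\tilde f$ is a $\C$-linear involution, since $\widetilde{\tilde f}=f$.

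Next I would establish the algebraic identities. The $L^{d/2}$ equality is an immediate change of variables, $\int_{\R^d}W^{d/2}=\int_{\R^d}|x|^{-2d}V(I(x))^{d/2}\,\rd x=\int_{\R^d}V^{d/2}$, since the weight $|x|^{-2d}$ is exactly the Jacobian. For the Dirichlet energy, I would take $f\in C^\infty_c(\R^d\setminus\{0\})$, which is dense in $\dot H^1(\R^d)$ because a point has zero $\dot H^1$-capacity for $d\geq3$, and integrate by parts:
\[
\int_{\R^d}|\nabla\tilde f|^2=-\int_{\R^d}\overline{\tilde f}\,\Delta\tilde f=-\int_{\R^d}\overline f\,\Delta f=\int_{\R^d}|\nabla f|^2,
\]
where the middle equality uses the intertwining identity together with the change of variables $y=I(x)$, the total weight being $|x|^{(2-d)+(-d-2)}=|x|^{-2d}$. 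By density this extends to all of $\dot H^1(\R^d)$, so $\tilde f\in\dot H^1(\R^d)$ with equal energy and $f\mapsto\tilde f$ is a bijective linear isometry of $\dot H^1(\R^d)$. The same substitution (total weight $|x|^{-4}\cdot|x|^{2(2-d)}=|x|^{-2d}$) gives the bilinear identity $\int_{\R^d}W\,\tilde g\,\overline{\tilde h}=\int_{\R^d}V\,g\,\overline h$ for all $g,h\in\dot H^1(\R^d)$.

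For the equality of the max-min levels I would use the substitution $g=(-\Delta)^{-1/2}\phi$, which realizes $(-\Delta)^{-1/2}$ as an isometric isomorphism of $L^2(\R^d)$ onto $\dot H^1(\R^d)$ and yields $\langle\phi,K_V\phi\rangle=\int_{\R^d}V|g|^2$ and $\|\phi\|_{L^2}^2=\int_{\R^d}|\nabla g|^2$, hence
\[
\mu_j(V)=\max_{\substack{\cN\subset\dot H^1(\R^d)\\ \dim\cN=j}}\ \min_{0\neq g\in\cN}\ \frac{\int_{\R^d}V|g|^2}{\int_{\R^d}|\nabla g|^2},
\]
the Rayleigh quotient being a bounded quadratic form on $\dot H^1(\R^d)$ by the Sobolev inequality. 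Since $g\mapsto\tilde g$ is a bijective isometry of $\dot H^1(\R^d)$ preserving both the numerator (bilinear identity with $h=g$) and the denominator of this quotient, the max-min levels agree: $\mu_j(W)=\mu_j(V)$ for all $j\geq1$.

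Finally, assuming $-\Delta f=Vf$ weakly in $\dot H^1(\R^d)$, I would test the target equation against an arbitrary $\phi\in C^\infty_c(\R^d\setminus\{0\})$ and set $\psi:=\tilde\phi\in\dot H^1(\R^d)$; the polarized isometry and then the polarized bilinear identity, using $\widetilde\psi=\phi$, give
\[
\int_{\R^d}\nabla\tilde f\cdot\overline{\nabla\phi}=\int_{\R^d}\nabla f\cdot\overline{\nabla\psi}=\int_{\R^d}Vf\,\overline\psi=\int_{\R^d}W\,\tilde f\,\overline\phi ,
\]
so that $-\Delta\tilde f=W\tilde f$ in the sense of distributions on $\R^d\setminus\{0\}$. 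Since $\tilde f\in\dot H^1(\R^d)$ and $W\tilde f\in L^{2d/(d+2)}(\R^d)$, which embeds in the dual of $\dot H^1(\R^d)$, the distribution $-\Delta\tilde f-W\tilde f$ lies in that dual and is supported at $\{0\}$, hence vanishes for $d\geq3$, and the equation holds on all of $\R^d$. I expect the only genuinely delicate points to be the handling of the singular point $0$ — density of $C^\infty_c(\R^d\setminus\{0\})$ in $\dot H^1(\R^d)$ and the removability of the singularity in the equation — together with the careful verification of the intertwining identity; everything else is an exact matching of powers of $|x|$ under the inversion.
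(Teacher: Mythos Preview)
Your proof is correct and follows essentially the same approach as the paper: both establish that $f\mapsto\tilde f$ is an isometry of $\dot H^1(\R^d)$, verify the bilinear identity $\int V|f|^2=\int W|\tilde f|^2$, deduce $\mu_j(W)=\mu_j(V)$ from the variational principle, and pass the equation through the weak formulation using that $\{0\}$ has zero $\dot H^1$-capacity in dimension $d\geq3$. The only cosmetic difference is that you obtain the Dirichlet-energy identity via the Kelvin intertwining formula $\Delta\tilde f=|x|^{-d-2}(\Delta f)\circ I$ on $C^\infty_c(\R^d\setminus\{0\})$ followed by density, whereas the paper computes $|\nabla\tilde f|^2$ pointwise and integrates by parts directly.
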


\begin{proof}
Let $f$ be any function in $\dot{H}^1(\R^d)$ and $\tilde f$ as in the statement. A simple change of variables provides
$$
\int_{\R^d} |\tilde f|^{\frac{2d}{d-2}}\,\rd x = \int_{\R^d} |f|^{\frac{2d}{d-2}}\,\rd x \,,
$$
hence $\tilde f\in L^{\frac{2d}{d-2}}(\R^d)$. On the other hand, $\tilde f$ is weakly differentiable in $\R^d\setminus\{0\}$ with
$$
\partial_k \tilde f(x) = |x|^{-d} \sum_\ell (\partial_\ell f)(|x|^{-2}x) \left( \delta_{k,\ell} - 2 \frac{x_kx_\ell}{|x|^2} \right) - (d-2) |x|^{-d} x_k f(|x|^{-2}x) \,.
$$
Thus,
\begin{multline*}
|\nabla \tilde f(x)|^2 = |x|^{-2d} |(\nabla f)(|x|^{-2}x)|^2  + (d-2)^2 |x|^{-2(d-1)}|f(|x|^{-2}x)|^2\\ + 2(d-2)|x|^{-2d} f(|x|^{-2}x) x \cdot (\nabla f)(|x|^{-2}x)
\end{multline*}
and, changing variables and integrating by parts we find,
$$
\int_{\R^d} |\nabla \tilde f|^2\,\rd x = \int_{\R^d} |\nabla f|^2\,\rd x \,.
$$
This proves that the distributional derivative $\nabla \tilde f$ on $\R^d\setminus\{0\}$ is square integrable. In dimension $d\geq3$, we have $\dot{H}^1(\R^d\setminus\{0\})=\dot{H}^1(\R^d)$ and thus conclude that $\nabla \tilde f$ is in fact the distributional derivative on the whole of $\R^d$, hence that $\tilde f\in \dot H^1(\R^d)$. We have thus proved that the map $f\in \dot{H}^1(\R^d)\mapsto \tilde f\in\dot{H}^1(\R^d)$ is an isometry. This isometry is onto since the tilde transformation is clearly invertible on $C^1_c(\R^d\setminus\{0\})$, which is a dense subspace of $\dot{H}^1(\R^d)$ in dimensions $d\geq3$.

For the potential, we have the relation
$$
\int_{\R^d} W^{\frac{d}2}\,\rd x = \int_{\R^d} V^{\frac{d}2}\,\rd x
$$
and thus obtain $W\in L^{d/2}(\R^d)$. In addition
$$\int_{\R^d}V|f|^2=\int_{\R^d}W|\tilde f|^2,\qquad \forall f\in \dot{H}^1(\R^d).$$
The equality of the quadratic forms and the  variational principle immediately give
$$\mu_j(W)=\mu_j(V),\qquad\forall j\geq1.$$
In fact, the two multiplication operators $V$ and $W$ on $\dot{H}^1(\R^d)$ are unitary equivalent through the isometry $f\mapsto \tilde f$, hence their spectra coincide.

Assume now that $f\in\dot{H}^1(\R^d)$ is so that $-\Delta f=Vf$. This is equivalent to the weak formulation
$$\int_{\R^d}\nabla \phi\cdot\nabla f=\int_{\R^d}V\phi f,\qquad \forall \phi\in \dot{H}^1(\R^d).$$
The tilde transformation being invertible, we obtain
\begin{equation}
	\label{eq:eqpunctured}
	\int_{\R^d} \nabla\phi\cdot \nabla \tilde f\,\rd x = \int_{\R^d} \tilde V \phi \tilde f\,\rd x
	\qquad\text{for all}\ \phi\in \dot{H}^1(\R^d),
\end{equation}
which is the weak formulation of the equation $-\Delta \tilde f=W \tilde f$.
\end{proof}

In the following we use conformal invariance to connect decay at infinity and regularity. This idea has been exploited, for instance, in~\cite{CafGidSpr-89}. See also~\cite{BorFra-20} for a recent adaptation to the  Dirac case.

Let $V_*$ be any optimizer for $\ell^{(N)}_{0,d}$, which satisfies the relation~\eqref{eq:equation_critical_proof} for some functions $f_j\in \dot H^1(\R^d)$ satisfying $-\Delta f_j = V_* f_j$. We apply Lemma~\ref{lem:transfo_sphere1} to $V_*$ and the $f_j$'s. This provides new functions $\tilde{f_j}$ and $W_*$ so that $-\Delta \tilde{f_j} = W_* \tilde{f_j}$. In addition, $W_*$ is given in terms of the $\tilde f_j$ by the same formula~\eqref{eq:equation_critical_proof} as $V_*$.  From the regularity proved in the previous step, we deduce that
$$W_*\in C^{0,\alpha}(\R^d),\qquad \alpha=\max\{1,4/(d-2)\}.$$
In particular,
$$
|W_*(x) - W_*(0)| \lesssim |x|^\alpha
\qquad\text{for all}\ |x|\leq 1
$$
which is the same as
$$
| V_*(y) - |y|^{-4} W_*(0)| \lesssim |y|^{-\alpha-4}
\qquad\text{for all}\ |y|\geq 1 \,.
$$
This proves that
\begin{equation}
\lim_{|y|\to\ii}|y|^4V_*(y)=W_*(0)
 \label{eq:limit_V_*}
\end{equation}
as we claimed. Note that if $d=3,4,5$, one has $W_*\in C^{1,\gamma}$ with $\gamma=\max\{1,(6-d)/(d-2)\}$ and therefore one can extract the next term in the asymptotics.

At this point we cannot exclude the situation that $W_*(0)=0$. Note, however, that $W_*$ is also an optimizer of $\ell^{(N)}_{0,d}$ since $\int_{\R^d}W_*^{d/2}\,\rd x=\int_{\R^d}V_*^{d/2}\,\rd x$ and $\mu_j(W_*)=\mu_j(V_*)$ for all $j$. Due to the continuity of $V_*$ at the origin, this new optimizer behaves at infinity like
$$\lim_{|x|\to\ii}|x|^4W_*(x)=V_*(0),$$
which is somewhat dual to~\eqref{eq:limit_V_*}. In general we also do not know if $V_*(0)>0$. We however know that there definitely exists an $a\in\R^d$ so that $V_*(a)>0$. Since our problem is invariant under space translations, we may always replace $V_*(x)$ by $V_*(x-a)$ which is also an optimizer for $\ell^{(N)}_{0,d}$, before introducing the function $W_*$. Thus we can always assume $V_*(0)>0$ and the new optimizer $W_*$ then has the claimed behavior
$$\lim_{|x|\to\ii}|x|^4W_*(x)=c_*>0.$$
Such a potential always has a compact nodal set.

\begin{remark}
Choosing $V_*$ appropriately, it is possible to provide a more precise estimate of the remainder. Since $V_*(0)>0$, $V_*$ is analytic in a neighborhood of $0$. Thus $|x|^4W_*(x)$ in fact admits a complete asymptotic expansion at infinity in terms of $|x|^{-2|\alpha|} x^\alpha$ where $\alpha$ is a multiindex. For instance,
$$
W_*(x) = \frac{c_*}{|x|^4} + \frac{d\cdot x}{|x|^6}  + O\left(\frac1{|x|^6}\right)
$$
with $c_*>0$ and some $d\in\R^d$. Choosing the origin so that $V_*$ is maximal at this point we can further assume that $\nabla V_*=0$, which implies $d=0$ in the previous expansion. In addition, we can always assume after an appropriate scaling that $V_*(0)=1$. Thus there always exists a minimizer of $\ell^{(N)}_{0,d}$ so that
$$
W_*(x) = \frac{1}{|x|^4} + O\left(\frac1{|x|^6}\right) \,.
$$
\end{remark}

\begin{remark}
An optimal potential $V_*$ such that $|x|^4V_*(x)\to0$ at infinity cannot decay faster than any power of $|x|$. Namely, we claim that there exists $M>0$ so that
\begin{equation}
 \limsup_{R\to\infty} R^{M} \int_{|x|>R} \frac{V_*(x)^{\frac{d-2}2}}{|x|^4} \rd x >0.
 \label{eq:not_too_fast}
\end{equation}
The assertion follows from the unique continuation theorem of Jerison and Kenig~\cite{JerKen-85}. Indeed, let $W_*$ be the same potential as above and $\tilde f\neq0$ any corresponding eigenfunction appearing in the equation, so that $W_*\geq C|\tilde f|^{4/(d-2)}$. By~\cite{JerKen-85} we know that there exists $M<\ii$ so that
$$\limsup_{r\to0} r^{-M} \int_{|x|<r} \frac{|\tilde f(x)|^2}{|x|^4} \rd x=\limsup_{R\to\ii} R^{M} \int_{|x|>R} \frac{|f(x)|^2}{|x|^4} \rd x >0,$$
otherwise $\tilde f$ would vanish to infinite order at the origin, hence be equal to 0 everywhere. Using then $V \geq C |f|^{4/(d-2)}$ we obtain the assertion~\eqref{eq:not_too_fast}.
\end{remark}

\subsubsection*{Step 4. Full rank.} As a final step we show that $K=m$, that is, the zero-energy modes are completely filled. Since we know that $V_*$ and the $g_j$ are continuous functions, we can take $\chi$ converging to a Dirac delta in~\eqref{eq:compare_eigenvalues} and we obtain
$$\mu_{m-K+1}\big(G(x)G(x)^*\big)\leq \sum_{j=1}^m\alpha_j|g_j(x)|^2\leq \mu_{K}\big(G(x)G(x)^*\big)$$
for all $x\in\R^d$, where
$$G(x):=\begin{pmatrix}
g_1(x)\\
\vdots\\
g_m(x)
\end{pmatrix}.$$
The $m\times m$ matrix $G(x)G(x)^*$ has rank one for all $x$, with the non-negative eigenvalue $\mu_m(G(x)G(x)^*)=\sum_{j=1}^m|g_j(x)|^2$. When $K<m$  we must therefore have $\mu_{m-K+1}\big(GG^*\big)=\mu_{K}\big(GG^*\big)=0$. This would imply $V_*\equiv0$, a contradiction. Thus we conclude that $K=m$, which is the same as saying that $\mu_{N+1}(V_*)<1=\mu_{N}(V_*)$ in~\eqref{eq:no_unfilled_shell_critical}. \qed

%%%%%%%%%%%%%%%%%%%%%%%%%%%%%%%%%%%%%%%%%%%%%%%%%%%%%%%%%%%
%%%%%%%%%%%%%%%%%%%%%%%%%%%%%%%%%%%%%%%%%%%%%%%%%%%%%%%%%%%
\section{Proof of Theorem~\ref{thm:bubble_critical} (bubble decomposition for CLR)}\label{sec:proof_bubble_critical}
%%%%%%%%%%%%%%%%%%%%%%%%%%%%%%%%%%%%%%%%%%%%%%%%%%%%%%%%%%%
%%%%%%%%%%%%%%%%%%%%%%%%%%%%%%%%%%%%%%%%%%%%%%%%%%%%%%%%%%%

Our proof relies on an existing bubble decomposition of G\'erard~\cite[Thm.~1.1]{Gerard-98} for the Sobolev inequality, which we use for the eigenfunctions.

\subsubsection*{Step 1. Extracting bubbles from the eigenfunctions}
Let $V_n\geq0$ be an optimizing sequence for $\ell^{(N)}_{0,d}$, normalized so that $\mu_N(V_n)=1$ for all $n$:
$$\lim_{n\to\ii}\int_{\R^d}V_n(x)^{\frac{d}2}\,\rd x=\frac{N}{\ell^{(N)}_{0,d}}.$$
Since $\ell^{(N)}_{0,d}$ is finite, the sequence $V_n$ is bounded in $L^{d/2}(\R^d)$. Then the Birman-Schwinger operator $K_{V_n}$ is bounded. After extracting a subsequence, we may thus assume that
$$\lim_{n\to\ii}\mu_j(V_n)=\mu_j$$
for all $j\geq1$. On the other hand, the definition~\eqref{eq:def_ell} of $\ell^{(j)}$ gives
$$\mu_j(V)\leq \frac{\big(\ell^{(j)}_{0,d}\big)^{\frac2d}\norm{V}_{L^{\frac{d}2}}}{j^{\frac2d}}\leq \frac{\big(L_{0,d}\big)^{\frac2d}\norm{V}_{L^{\frac{d}2}}}{j^{\frac2d}}$$
for all $V\in L^{d/2}(\R^d)$. This bound pertains in the limit $n\to\ii$ and we obtain $\mu_j\leq Cj^{-2/d}$.
Recall that $\mu_N=1$. Let $M\geq N$ be the largest integer so that $\mu_M=1$. Then we have
$$N\leq M\leq \frac{L_{0,d}}{\ell^{(N)}_{0,d}}N.$$

Consider now a basis of $M$ eigenfunctions $u_{m,n}$ of $K_{V_n}$ corresponding to the $M$ largest eigenvalues of $K_{V_n}$:
$$\frac{1}{\sqrt{-\Delta}}V_n\frac{1}{\sqrt{-\Delta}}u_{m,n}=\mu_m(V_n)\,u_{m,n},\quad \pscal{u_{m,n},u_{m',n}}_{L^2}=\delta_{mm'},$$
with $1\leq m,m'\leq M$. Define the corresponding zero-energy states
$$f_{m,n}:=\frac{1}{\sqrt{-\Delta}}u_{m,n}$$
which solve
\begin{equation}
V_nf_{m,n}=-\mu_m(V_n)\Delta f_{m,n}
 \label{eq:zero_mode_f_n_proof}
\end{equation}
and form an orthonormal system in $\dot{H}^1(\R^d)$. In other words, those diagonalize the multiplication operator $V_n$ in $\dot{H}^1(\R^d)$.

Next, we apply G\'erard's profile decomposition~\cite{Gerard-98} in $\dot{H}^1(\R^d,\C^M)$ to the $F_n=(f_{m,n})_{m=1}^M$. This provides us with a (finite, infinite, or empty) collection of scaling parameters $t_n^{(j)}>0$, translations $x_n^{(j)}\in\R^d$ and functions $0\neq F^{(j)}=(f_m^{(j)})_{m=1}^M\in \dot{H}^1(\R^d,\C^M)$ so that, after extraction of a (not displayed) subsequence,
$$\frac1{(t_n^{(j)})^{\frac{d-2}{2}}}f_{m,n}\left(\frac{\cdot+x_n^{(j)}}{t_n^{(j)}}\right)\wto f^{(j)}_m,\qquad \forall 1\leq m\leq M,\ \forall j\geq1$$
weakly in $\dot{H}^1(\R^d)$, strongly in $L^2_{\rm loc}(\R^d)$ and almost everywhere. For any $\eps>0$, there exists a $J\geq0$ so that
\begin{equation}
f_{m,n}=\sum_{j=1}^J\underbrace{ (t_n^{(j)})^{\frac{d-2}{2}}f_m^{(j)}\left(t_n^{(j)}\big(\cdot-x_n^{(j)}\big)\right)}_{=:f_{m,n}^{(j)}}+r_{m,n}^{(J)},\qquad \norm{r_{m,n}^{(J)}}_{L^{\frac{2d}{d-2}}}\leq \eps
\label{eq:bubble_critical_f_m}
\end{equation}
and
\begin{equation*}
1=\int_{\R^d}|\nabla f_{m,n}|^2=\sum_{j=1}^J \int_{\R^d}|\nabla f_m^{(j)}|^2+\int_{\R^d}|\nabla r_{m,n}^{(J)}|^2+o(1)_{n\to\ii}.
\end{equation*}
If there is no bubble then $f_{m,n}\to0$ strongly in $L^{\frac{2d}{d-2}}(\R^d)$ for all $m=1,...,M$. This is impossible since
$$1\leq \mu_m=\int_{\R^d}V_n|f_{m,n}|^2\leq\norm{V_n}_{L^{\frac{d}2}}\norm{f_{m,n}}_{L^{\frac{2d}{d-2}}}^2.$$
Thus we conclude that $J\geq1$ for $\eps$ small enough. If $J\geq2$, two $(t_n^{(j)},x_n^{(j)})$ and $(t_n^{(j')},x_n^{(j')})$ with $j\neq j'$ are orthogonal in the sense of~\eqref{eq:orthogonal_scalings}. After extracting a further subsequence, we may assume that
\begin{equation}
 V^{(j)}_n:=\frac1{(t_n^{(j)})^2}V_n\left(\frac{\cdot+x_n^{(j)}}{t_n^{(j)}}\right)\wto V^{(j)}
 \label{eq:V_n_j}
\end{equation}
weakly in $L^{d/2}(\R^d)$ for $j\geq1$.
% and we denote
% $$:=(t_n^{(j)})^2V^{(j)}\left(t_n^{(j)}(\cdot-x_n^{(j)})\right).$$
Rescaling the eigenvalue equation~\eqref{eq:zero_mode_f_n_proof} and passing to the weak limit using the strong local compactness of $f_{m,n}$ gives
$$V^{(j)}f_{m}^{(j)}=-\mu_m\Delta f_{m}^{(j)}.$$
For each $j$ there is an $m$ so that $f_{m}^{(j)}\neq0$ and for this $m$ we infer
$$\int_{\R^d}V^{(j)}|f_{m}^{(j)}|^2=\mu_m\int_{\R^d}|\nabla f_{m}^{(j)}|^2>0$$
since $\mu_m\geq1$. Hence we have proved that $V^{(j)}\neq0$ for all $j$ and that $\mu_m$ is an eigenvalue of $K_{V^{(j)}}$, for any $j$ so that $f_m^{(j)}\neq0$.

This is valid for the whole (possibly infinite) sequence of bubbles. In the rest of the argument we fix $\eps$ small enough and only look at the $J$ first bubbles appearing in~\eqref{eq:bubble_critical_f_m}. We will in fact prove that $J$ is independent of $\eps$, that is, there are finitely many bubbles.

\subsubsection*{Step 2. Relation to the spectra of the $K_{V^{(j)}}$}
For every $j$, let $N_j$ denote the largest integer so that $\mu_{N_j}(V^{(j)})\geq1$. From the previous analysis we know that $N_j\geq1$ since one of the $\mu_m$'s is an eigenvalue. Consider a basis of orthonormal functions $(g^{(j)}_m)_{m=1}^{N_j}$ in $\dot{H}^1(\R^d)$ for $V^{(j)}$, with eigenvalues $\mu_m(V^{(j)})$. We look at the rescaled functions
$$g_{m,n}^{(j)}:=(t_n^{(j)})^{\frac{d-2}{2}}g^{(j)}_m\big(t_n^{(j)}(\cdot -x_n^{(j)})\big).$$
Recalling that  $V_n^{(j)}$ is the rescaled potential~\eqref{eq:V_n_j} at the scale $(t_n^{(j)},x_n^{(j)})$, we have
\begin{align*}
\int_{\R^d} V_n\overline{g_{m,n}^{(j)}}g_{m',n}^{(j)}&=\int_{\R^d} V_n^{(j)}\overline{g_{m}^{(j)}}g_{m'}^{(j)}\\
&\underset{n\to\ii}{\longrightarrow}\int_{\R^d} V^{(j)}\overline{g_{m}^{(j)}}g_{m'}^{(j)}=\mu_m(V^{(j)})\delta_{m,m'},
\end{align*}
due to the weak convergence of $V_n^{(j)}$ in~\eqref{eq:V_n_j}.
On the other hand, for $j\neq j'$
$$\left|\int_{\R^d} V_n\overline{g_{m,n}^{(j)}}g_{m',n}^{(j')}\right|\leq \norm{V_n}_{L^{d/2}}\left(\int_{\R^d}|g_{m,n}^{(j)}(x)|^{\frac{d}{d-2}}|g_{m,n}^{(j')}(x)|^{\frac{d}{d-2}}\,\rd x\right)^{\frac{d-2}{d}}.$$
The integral on the right side tends to zero due to~\eqref{eq:orthogonal_scalings}. In fact, we have
\begin{equation}
 \int_{\R^d}(t_n^{(j)})^{\frac1p}(t_n^{(j')})^{1-\frac1p}f\big(t_n^{(j)}(x-x_n^{(j)})\big)g\big(t_n^{(j')}(x-x_n^{(j')})\big)\,\rd x\underset{n\to\ii}{\longrightarrow}0
 \label{eq:orthogonal_scaling}
\end{equation}
for all $f\in L^p(\R^d)$ and $g\in L^{\frac{p}{p-1}}(\R^d)$ with $1<p<\ii$. We have thus shown that the matrix of  the restriction of $V_n$ to the space
\begin{equation}
 \cG_n:=\text{span}\{g_{m,n}^{(j)},\ j=1,...,J,\ m=1,...,N_j\}
 \label{eq:def_cG_n}
\end{equation}
converges in the limit $n\to\ii$ to the diagonal matrix with entries equal to the $\mu_m(V^{(j)})\geq1$. In particular the space $\cG_n$ has dimension $\sum_{j=1}^JN_j$ for $n$ large enough. The variational principle implies that
$$\lim_{n\to\ii}\mu_{\sum_{j=1}^J N_j}(V_n)\geq\min_{\substack{j=1,...,J\\m=1,...,N_j}}\mu_m(V^{(j)})\geq1$$
and thus
$$\sum_{j=1}^JN_j\leq M$$
due to the definition of $M$. Since $N_j\geq1$ we infer that there are finitely many bubbles:
$$J\leq M.$$
However $\eps$ can be taken as small as we like in the bubble decomposition~\eqref{eq:bubble_critical_f_m} whereas $J$ can be kept fixed up to a subsequence. We may thus assume that the remainder goes to $0$:
\begin{equation}
 \lim_{n\to\ii}\norm{r_{m,n}^{(J)}}_{L^{\frac{2d}{d-2}}}=0\qquad\text{for all $m=1,...,M$.}
 \label{eq:small-remainder}
\end{equation}

The matrix of $V_n$ in the space spanned by the eigenfunctions $f_{m,n}$ is diagonal. Since the remainders $r_{m,n}^{(J)}$ tend to zero strongly in $L^{\frac{2d}{d-2}}(\R^d)$, the matrix in the space spanned by the functions $f_{m,n}-r_{m,n}^{(J)}$ for $m=1,...,M$ tends to the same diagonal matrix. This proves that the space spanned by the $f_{m,n}-r_{m,n}^{(J)}$ has dimension $M$. But this space is in fact contained in $\cG_n$ in~\eqref{eq:def_cG_n}, since any non-zero rescaled bubble $f^{(j)}_{m,n}$ is an eigenfunction of $V_n^{(j)}$, hence a linear combination of the $g_{m,n}^{(j)}$. Thus we obtain the reverse inequality $M\leq \sum_{j=1}^JN_j$ and conclude that
\begin{equation}
 \sum_{j=1}^JN_j= M.
\label{eq:equality_M}
\end{equation}
It turns out that we have proved more, namely that the spectrum is exactly given in the limit by the union of the spectra of the $K_{V^{(j)}}$, including multiplicities. But in the following we will only use~\eqref{eq:equality_M}.

\subsubsection*{Step 3. Conclusion}
We recall that
$$\frac{N}{\ell_{0,d}^{(N)}}=\lim_{n\to\ii}\int_{\R^d}(V_n)^{\frac{d}2}\geq \sum_{j=1}^J\int_{\R^d}(V^{(j)})^{\frac{d}2}.$$
To prove the last inequality, the usual method is to introduce balls centered at each bubble, growing slightly faster than the scaling $t_n^{(j)}$, so that the restriction of $V_n$ to the ball still converges weakly to $V^{(j)}$ after scaling, and to discard the integral outside of the union of the balls. From the formula~\eqref{eq:def_ell_equivalent} of $\ell^{(N_j)}_{0,d}$ and Lemma~\ref{lem:pties_ell}, we obtain:
\begin{multline}
\frac{N}{\ell_{0,d}^{(N)}}\geq \sum_{j=1}^J\int_{\R^d}(V^{(j)})^{\frac{d}2}
\geq \sum_{j=1}^J \frac{N_j}{\ell_{0,d}^{(N_j)}}\mu_{N_j}(V^{(j)})^{\frac{d}2}\\
\geq \sum_{j=1}^J \frac{N_j}{\ell_{0,d}^{(N_j)}}\geq \frac{M}{\ell^{(M)}_{0,d}}\geq \frac{N}{\ell^{(N)}_{0,d}}.
\label{eq:argument_binding_bubbles}
\end{multline}
Thus there is equality everywhere:
$$\frac{N}{\ell_{0,d}^{(N)}}=\sum_{j=1}^J \frac{N_j}{\ell_{0,d}^{(N_j)}}=\frac{M}{\ell^{(M)}_{0,d}}.$$
We also deduce that $V^{(j)}$ is an optimizer for $\ell^{(N_j)}_{0,d}$ and that $\mu_{N_j}(V^{(j)})=1$. If $M>N$ then we may as well choose to estimate
$$\int_{\R^d}(V^{(j)})^{\frac{d}2}\geq \frac{N_j-1}{\ell_{0,d}^{(N_j-1)}}\mu_{N_j-1}(V^{(j)})^{\frac{d}2}$$
for one $j$ and obtain instead
$$\frac{M-1}{\ell^{(M-1)}_{0,d}}= \frac{N}{\ell^{(N)}_{0,d}}$$
since $N\leq M-1$. We infer that $V^{(j)}$ is also an optimizer for $(N_j-1)/\ell_{0,d}^{(N_j-1)}$ and that $\mu_{N_j-1}(V^{(j)})=1$. However, from Theorem~\ref{thm:equation_critical} we know then that $\mu_{N_j}(V^{(j)})<1$, a contradiction. We therefore conclude that $M=N$ and we have shown~\eqref{eq:decomp_ell_N_inverse}.
Finally, we also have
\begin{equation}
\lim_{n\to\ii}\int_{\R^d}(V_n)^{\frac{d}2}= \sum_{j=1}^J\int_{\R^d}(V^{(j)})^{\frac{d}2},
\label{eq:converge_V_n_mass}
\end{equation}
which implies the limit~\eqref{eq:bubble_v_critical}. \qed

\section{Proof of Theorem~\ref{thm:monotony_critical} (monotonicity of $\ell^{(N)}_{0,d}$)}\label{sec:proof_AH}

\subsubsection*{Step 1. Case $N=2$}
Let $0\leq V\in L^{d/2}(\R^d)$ with $V\neq0$ and $\mu_2(V)$ be the second eigenvalue of the Birman--Schwinger operator $K'_V=\sqrt{V}(-\Delta)^{-1}\sqrt{V}$. Let $f$ be a corresponding eigenfunction. Since $K'_V$ has a non-negative kernel, the first eigenfunction is non-negative by Perron-Frobenius (in fact positive on the support of $V$), and thus $f$ must change sign. Let us introduce the function $u:= V^{-1/2} f\in \dot H^1(\R^d)$ which satisfies
$$
-\Delta u = \mu_2(V)^{-1} V u \,.
$$
For $j=1,2$ we define $a_j>0$ such that $u_1:= a_1 V^{-1/2} f_+$ and $u_2 := a_2 V^{-1/2} f_-$ satisfy
$$
\int_{\R^d} V u_j^2 = 1 \,.
$$
Thus, with $E_\pm := \{\pm f > 0\}$ and $S_d$ the Sobolev constant, we have
\begin{align*}
	2 & = \int_{\R^d} V u_1^2 + \int_{\R^d} V u_2^2 \\
	& \leq \left( \int_{E_+} V^{\frac{d}2} \right)^{\frac2d} \left( \int_{\R^d} u_1^{\frac{2d}{d-2}} \right)^{\frac{d-2}d} + \left( \int_{E_-} V^{\frac{d}2} \right)^{\frac2d} \left( \int_{\R^d} u_2^{\frac{2d}{d-2}} \right)^{\frac{d-2}d} \\
	& \leq \frac1{S_d} \left( \left( \int_{E_+} V^{\frac{d}2} \right)^{\frac2d} \int_{\R^d} |\nabla u_1|^2 + \left( \int_{E_-} V^{\frac{d}2} \right)^{\frac2d} \int_{\R^d} |\nabla u_2|^2 \right) \\
	& = \frac1{S_d\mu_2(V)} \left( \left( \int_{E_+} V^{\frac{d}2} \right)^{\frac2d} \int_{\R^d} V u_1^2 + \left( \int_{E_-} V^{\frac{d}2} \right)^{\frac2d} \int_{\R^d} V u_2^2 \right) \\
	& = \frac1{S_d\mu_2(V)}  \left( \left( \int_{E_+} V^{\frac{d}2} \right)^{\frac2d} + \left( \int_{E_-} V^{\frac{d}2} \right)^{\frac2d} \right) \\
	& \leq \frac{2^{\frac{d-2}d}}{S_d\mu_2(V)}   \left( \int_{E_+} V^{\frac{d}2} + \int_{E_-} V^{\frac{d}2} \right)^{\frac2d}  =\frac{2^{\frac{d-2}d}}{S_d\mu_2(V)}  \|V\|_{L^{d/2}(\R^d)} \,.
\end{align*}
We have therefore shown that
$
\mu_2(V) \leq S_d^{-1} 2^{-\frac2d} \|V\|_{L^{d/2}(\R^d)} \,.
$
By definition of $\ell^{(2)}_{0,d}$, this means
$$
\ell_{0,d}^{(2)} \leq S_d^{-\frac{d}2}=\ell_{0,d}^{(1)} \,.
$$
The opposite inequality $\ell_{0,d}^{(2)}\geq \ell_{0,d}^{(1)}$ holds according to~\eqref{eq:subadditivity_ell} in Lemma~\ref{lem:pties_ell}. If there existed an optimizer $V$ for both $\ell^{(1)}_{0,d}$ and $\ell^{(2)}_{0,d}$, then there would be equality everywhere in the above series of inequalities. In particular, the corresponding functions $u_1$ and $u_2$ would be Sobolev optimizers. But then they would be positive over the whole of $\R^d$, a contradiction.
% should satisfy $\mu_2(V)=2^{-d/2}\mu_1(V)$. If we normalize it so that $\mu_1(V)=1$, we obtain the Sobolev optimizer
% $$V(x)=\frac{d(d-2)}{(1+|x|^2)^2}$$
% up to translations and dilations. By~\cite[Lem.~15]{Frank-23}, this potential has the second eigenvalue
% $$\mu_2(V)=\frac{d-2}{d+2}$$
% which is never equal to $2^{-d/2}$ in dimensions $d\geq3$. Thus there cannot exist an optimizer.
The behavior of maximizing sequences follows from Theorem~\ref{thm:bubble_critical} and the explicit form of maximizers for $\ell^{(1)}_{0,d}$.

\subsubsection*{Step 2. Case $N=d+2$}
To prove the inequality~\eqref{eq:AH-GGM} we consider the potential
$$V(x)=\left(L+\frac{d-2}{2}\right)\left(L+\frac{d}{2}\right)\frac{4}{(1+|x|^2)^2}$$
which, by~\cite[Lem.~15]{Frank-23}, satisfies $\mu_N(V)=1$ with
$$\int_{\R^d}V^{\frac{d}2}=\left(L+\frac{d-2}{2}\right)^{\frac{d}2}\left(L+\frac{d}{2}\right)^{\frac{d}2}|\bS^d|,\qquad N=\frac{2}{d!}\frac{(L+d-1)!\;(L+\frac{d}2)}{L!}.$$
Taking $L=1$ we find $N=d+2$ and
$$\ell^{(d+2)}_{0,d}\geq\frac{d+2}{\int_{\R^d}V^{\frac{d}2}}=\frac{2^d}{d^{\frac{d}2}(d+2)^{\frac{d-2}2}|\bS^d|}=\frac{(d-2)^{\frac{d}2}}{(d+2)^{\frac{d-2}2}}\ell^{(1)}_{0,d}.$$
The right side is greater than $\ell^{(1)}_{0,d}$ when $d\geq7$. Note that when the dimension $d$ increases, a similar result holds for larger values of $L$, see~\cite{Frank-23}. This concludes the proof of Theorem~\ref{thm:monotony_critical}.\qed

\appendix
\section{Proof of Lemma~\ref{lem:link_Yamabe}}\label{app:Yamabe}
In order to rewrite~\eqref{eq:EN_AH} in a Birman--Schwinger-type form and explain the link with our $\ell^{(N)}_{0,d}$, we pick in a conformal class $\mathcal C$ a metric $g_0$ so that
$$
\mathcal C = \left\{ u^{\frac4{d-2}} g_0 :\ 0 < u \in C^\infty(\cM) \right\} \,.
$$
For $g=u^{\frac4{d-2}} g_0 \in\mathcal C$, we have
$
\Vol_g = \int_\cM u^{\frac{2d}{d-2}} \,\rd\,\vol_{g_0}
$
as well as
$$
\nabla_g = u^{-\frac4{d-2}}\nabla_{g_0} \,,
\qquad
| \cdot |_g = u^{\frac4{d-2}} |\cdot|_{g_0},\qquad
R_g = \frac{4(d-1)}{d-2} u^{-\frac{d+2}{d-2}} L_{g_0} u \,.
$$
Thus, for any function $\phi$,
\begin{align*}
	\langle \phi,L_g \phi\rangle_{L^2(\rd\,\vol_g)} & = \int \left( |\nabla_g \phi|_g^2 + \frac{d-2}{4(d-1)}
	R_g \phi^2\right)\rd\,\vol_g \\
	&= \int \left( u^{-\frac4{d-2}} |\nabla_{g_0} \phi|_{g_0}^2 + u^{-\frac{d+2}{d-2}} (L_{g_0} u) \phi^2 \right) u^{\frac{2d}{d-2}} \,\rd\,\vol_{g_0} \\
	& = \int \left( u^2 |\nabla_{g_0} \phi|_{g_0}^2 + u (-\Delta_{g_0} u) \phi^2 + \frac{d-2}{4(d-1)}  R_{g_0} u^2 \phi^2 \right) \rd\,\vol_{g_0} \\
	& = \int \left( |\nabla_{g_0} (u\phi) |_{g_0}^2 + \frac{d-2}{4(d-1)} R_{g_0} u^2 \phi^2\right) \rd\,\vol_{g_0} \\
	& = \langle u\phi,L_g u\phi\rangle_{L^2(\rd\,\vol_{g_0})}.
\end{align*}
Moreover,
$$
\langle \phi, \phi\rangle_{L^2(\rd\,\vol_g)} = \int \phi^2 \,\rd\,\vol_{g} = \int \phi^2 u^{\frac{2d}{d-2}} \,\rd\,\vol_{g_0} = \langle u\phi,u^{\frac{4}{d-2}} u\phi\rangle_{L^2(\rd\,\vol_{g_0})} \,.
$$
This proves that
$$
\lambda_N(g) = \lambda_N\left( u^{-\frac2{d-2}} L_{g_0} u^{-\frac{2}{d-2}}\right) \,.
$$
In particular, if $L_{g_0}$ is positive definite (which is equivalent to $E_1>0$ and also to the Yamabe constant being positive), then
$$
\lambda_N(g) = \frac1{\mu_N\left(u^{\frac2{d-2}} L_{g_0}^{-1} u^{\frac2{d-2}}\right)},
$$
where $\mu_N$ denotes the $N$th eigenvalue in decreasing order. Consequently,
$$
E_N = \inf_u \frac{\left( \int u^{\frac{2d}{d-2}} \,\rd\,\vol_{g_0} \right)^{\frac2d}}{\mu_N\left(u^{\frac2{d-2}} L_{g_0}^{-1} u^{\frac2{d-2}}\right)} \,.
$$

We now specialize to the case where $\cM=\bS^d$ is the unit sphere and $\mathcal C$ is the conformal class of the standard metric. Using that $\R^d$ is conformally equivalent (via stereographic projection) to $\bS^d$ with a point removed~\cite{LieLos-01}, we obtain~\eqref{eq:link_Yamabe} after considering the potential $V:=u^{4/(d-2)}$.\qed

%%%%%%%%%%%%

\subsection*{Acknowledgement} This project has received funding from the U.S. National Science Foundation (DMS-1363432 and DMS-1954995 of R.L.F.), from the German Research Foundation (EXC-2111-390814868 of R.L.F.), and from the European Research Council (ERC) under the European Union's Horizon 2020 research and innovation programme (MDFT 725528 of M.L.).

% \bibliographystyle{alpha}
% \bibliography{biblio}

\printbibliography

\end{document}